\def\f{\frac}
\def\pa{\partial}
\def\lab{\label}
\def\i1n{i=1,\cdots,n}
\def\j1n{j=1,\cdots,n}
\def\ij1n{i,j=1,\cdots,n}
\def\R{\mathbb R}
\def\N{\mathbb N}
\def\Z{\mathbb Z}
\def\C{\mathbb C}
\def \i{\mathrm i}
 \numberwithin{equation}{section}
\theoremstyle{definition}
 \newtheorem{thm}{\indent Theorem}[section]
 \newtheorem{cor}{\indent Corollary}[section]
 \newtheorem{lem}{\indent Lemma}[section]
 \newtheorem{prop}{\indent Proposition}[section]
 \newtheorem{defn}{\indent Definition}[section]
 \newtheorem{rem}{\indent Remark}[section]
 \newtheorem{property}{\indent Property}[section]
\theoremstyle{definition}
\newcommand\bna{\begin{eqnarray*}}
\newcommand\ena{\end{eqnarray*}}
\newcommand\bnan{\begin{eqnarray}}
\newcommand\enan{\end{eqnarray}}
\newcommand\bnp{\begin{proof}}
\newcommand\enp{\end{proof}}
\newcommand\bneq{\begin{eqnarray*}\left\lbrace \begin{array}{rcl}}
\newcommand\eneq{\end{array} \right.\end{eqnarray*}}
\newcommand\bneqn{\begin{eqnarray}\left\lbrace \begin{array}{rcl}}
\newcommand\eneqn{\end{array} \right.\end{eqnarray}}
\DeclareMathOperator{\differential}{Diff}
\newcommand{\be}{\begin{equation}}
\newcommand{\ee}{\end{equation}}
\newcommand{\beq}{\begin{equation*}}
\newcommand{\eeq}{\end{equation*}}
\newcommand{\Mx}{Q}
\newcommand{\Bo}{\ensuremath{\mathcal{B}}}
\begin{document}

\begin{CJK*}{GB}{gbsn}
\title{\bf On the Observability Inequality of Coupled Wave Equations: the Case without Boundary}

\author{Yan Cui \thanks{School of Mathematics (Zhuhai), Sun Yat-sen University, Zhuhai, 519082, P. R. China. E-mail: \texttt{cuiy27@mail.sysu.edu.cn}. }
, Camille Laurent \thanks{CNRS, UMR 7598, Laboratoire Jacques-Louis
Lions, F-75005, Paris, France and Sorbonne Universit\'e, Universit\'e Paris-Diderot SPC, CNRS,  Laboratoire Jacques-Louis Lions,  F-75005 Paris, E-mail:   \texttt{laurent@ann.jussieu.fr.} }
 and Zhiqiang Wang\thanks{School of Mathematical Sciences and Shanghai Key Laboratory for Contemporary Applied Mathematics,
Fudan University, Shanghai 200433, P. R. China.
E-mail: \texttt{wzq@fudan.edu.cn}. }
}

\date{}
\maketitle
%\tableofcontents

%\listoftodos
\begin{abstract}
In this paper, we study the observability and controllability of wave equations coupled by
first or zero order terms on a compact manifold. We adopt  the approach in Dehman-Lebeau's   paper \cite{DehmanLebeau09} to prove that:
the weak observability inequality holds for wave equations  coupled by first order terms on compact manifold without boundary  if and only if a class of ordinary differential equations related to the symbol of the first order terms along the Hamiltonian flow are exactly controllable.
We also compute the higher order part of the observability constant and the observation time.
By duality, we obtain the  controllability of the dual control system in a finite co-dimensional space. This gives the full controllability under the assumption of unique continuation of eigenfunctions.
Moreover, these results can be applied to the systems of wave equations coupled by zero order terms of cascade structure after an appropriate change of unknowns and spaces. Finally, we provide some concrete examples as applications where the unique continuation property indeed holds.
\end{abstract}
 \noindent\textbf{ 2010 Mathematics Subject  Classification}.
93B05, % Controllability
 93B07, % Observability
35L05  %wave equation

\noindent\textbf{ Key Words}.  Wave equation,  coupled system,  observability inequality, unique continuation property, controllability

 \section{Introduction and main results}\lab{section1.1}
\subsection{Coupling of order one}
Let ~$(\mathcal{M},g)$~ be a compact connected ~$n$-dimensional Riemannian manifold without boundary. Denote $\Delta_g$ the Laplace-Beltrami operator on $\mathcal{M}$ for the metric $g$. We consider the observability and control problem for the system of coupled wave equations:
\be
    \lab{mainwave1.1}
    \begin{cases}
        \pa_t^2V-\Delta_g V+ LV=0,\\
        (V(0),\partial_tV(0))=(V_0,V_1),
    \end{cases}
\ee
where~$V=(V^1,\cdots,V^N)^{tr}$ with $N\in \Z^+$ and $L$ is a matrix of differential operator of order one on $\R\times \mathcal{M}$ of the form
\be
    \lab{Ldiff}
            L=   A_0\pa_t + A_1,
\ee
with
$A_k\in \mathcal{C}^\infty(\R;\differential^k(\mathcal{M};\C^{N\times N})),  \ (k=0,1).$
Here $\differential^k(\mathcal{M};\C^{N\times N})$ is the set of matricial differential operators of order $k$ in space with smooth coefficients.

%

 %Thanks to the classical Wellposedness of linear wave equations\cite{Pazy83}, if the initial datum of system \eqref{mainwave1.1}~$(V_0,V_1)\in (H^1)^N\times (L^2)^N$, we have solution $V\in \mathcal{C}^0(0,T;H^1)\cap \mathcal{C}^1(0,T;L^2))$.
It is known that the weak solution of the Cauchy problem of System \eqref{mainwave1.1} exists for any initial data $(V_0,V_1)\in (H^1)^N\times (L^2)^N$ (see \cite{Pazy83}). Here and hereafter, $H^s\ (s\in \mathbb{R})$ denotes the Sobolev space  on manifold $\mathcal{M}$  with the norm defined as follows:
$\|f\|_{H^s}^2 = \| \Lambda^s f \|_{L^2}^2$,
where
\be\lab{main-lambda}\begin{split}
\Lambda^s f : = (-\Delta+1)^{\frac{s}{2}} f = \sum_{j \in \N} (\kappa_j + 1)^\frac{s}{2} (f, e_j)_{L^2} \, e_j , \quad  s\in \R
\end{split}
\ee
$(e_j)_{j \in \N}$ the eigenfunctions of the Laplace-Beltrami operator associated
to the eigenvalues  $(\kappa_j)_{j \in \N}$ which forms a Hilbert basis of $H^s$ .
In this context, we are interested in the following observability problem.

\begin{defn}
\label{defExactobs}
 We say that System \eqref{mainwave1.1} is {\bf Exactly Observable} on $[0,T]$, if the solutions of \eqref{mainwave1.1}  satisfy {\bf  Observability Inequality}
\be
    \lab{mainobsins}
    C^1_{obs} \int_0^T\|DV(t)\|^2_{(L^2)^K}dt \geq \|(V_0,V_1)\|^2_{(H^1)^N\times (L^2)^N},
\ee
where~$C^1_{obs}>0$
is a constant independent of the initial data ~$(V_0,V_1)$ and ~the observation operator $D\in \mathcal{C}^\infty(\R;\differential^1( \mathcal{M};\C^{K\times N}))$($\differential^1( \mathcal{M};\C^{K\times N})$ is a matrix of differential operator of order one on $\R\times \mathcal{M}$ taking the form)
\be
    \lab{Ddiff}
            D=   D_0\pa_t + D_1,
 \ee
 with
$D_k\in \mathcal{C}^\infty(\R;\differential^k(\mathcal{M};\C^{K\times N})),  \ (k=0,1).$

\end{defn}
\begin{defn}
We say that System \eqref{mainwave1.1} is {\bf Weakly Observable} on $[0,T]$, if the solutions of  \eqref{mainwave1.1}  satisfy {\bf  Weak Observability Inequality}
\be
    \lab{mainobsinw}
    C^2_{obs} \int_0^T\|DV(t)\|^2_{(L^2)^K}dt+c_1\|(V_0,V_1)\|^2_{(H^{\f{1}{2}})^N\times (H^{-\f{1}{2}})^N}\geq \|(V_0,V_1)\|^2_{(H^1)^N\times (L^2)^N},
\ee
where~$C_{obs}^2>0$~and $c_1$ are constants independent of the initial data ~$(V_0,V_1)$ and the observation operator $D$ is defined by \eqref{Ddiff}.
\end{defn}

Roughly speaking, the weak observability can be understood as the observability of functions with high frequency, that is,  $\|(V_0,V_1)\|_{(H^1)^N\times (L^2)^N}\gg \|(V_0,V_1)\|_{(H^{\f{1}{2}})^N\times (H^{-\f{1}{2}})^N}$.

%??????1?D???????11?D???o??a?o?: ???????????a33????|?????11?2???????o??????????11?2???????o??o???, ??????????o????????~$L$ ?????|????????????~$D$~???????????2?????o2??????t?
%%%%%%%%%%%%%%%%%%%%%%%%%%%%%%%%%%%%%%%%%%%%%%%%%%%%%%%%%%%problem:
%%The observation problem is whether, for any initial data ~$(V_0,V_1)$, solutions of \eqref{mainwave1.1} satisfy observability inequality \eqref{mainobsins}.
%%
%% The weakly observation problem is whether, for any initial data ~$(V_0,V_1)$, solutions of \eqref{mainwave1.1} satisfy
%% or weak observability inequality\eqref{mainobsinw}.

% \begin{itemize}
%   \item What kind of~$L$~and ~$D$~ can lead to weakly observability \eqref{mainobsinw} or observability \eqref{mainobsins} of \eqref{mainwave1.1} ?
%   \item If we have \eqref{mainwave1.1}, optimal constant $C_{obs}^2$?
% \end{itemize}
\bigskip
 We mention a few notational conventions that we will use throughout.
We will use notation ~$\dot{X}=\f{dX}{dt}$~. We denote ~$a^{*}=\bar{a}^{tr}$ the adjoint matrix of $a$, $a^{tr}$ the transpose of $a$ and $L^*$~ the adjoint operator of ~$L$ for the $L^2$ (or $(L^2)^N$) scalar product inherited from the Riemannian structure.
We denote $S^*\mathcal{M}$ the cosphere bundle of $\mathcal{M}$. $\varphi_{ t}(\rho_0)$ is the Hamiltonian flow of ~$|\xi|_x $ initiated at $\rho_0$ defined by the formula
\be\lab{ini:varphi}
  \varphi_{ t}(\rho_0)=(x(t),\xi(t)),~~\varphi_0(\rho_0)=\rho_0.
  \ee	
%Let  and $d_0\in C^\infty(\R;S_{phg}^0(T^*\mathcal{M};\C^{K\times N})),d_1\in C^\infty(\R;S_{phg}^1(T^*\mathcal{M};\C^{K\times N}))$ are, respectively, the principal symbols of $D_0$ and $D_1$ which are defined in \eqref{Ddiff}.	
We refer to Section \ref{section1.2} for more details.
Then we state our main results:
\begin{thm}
\lab{mainthm-w}
Solutions of System \eqref{mainwave1.1} satisfy weak observability inequality \eqref{mainobsinw} on $[0,T]$ if and only if for any $\rho_0\in S^*\mathcal{M}$,  the finite dimensional control system %
\be
    \lab{mainmainode}
\begin{cases}
        \displaystyle \dot{ X}(t)=\f{1}{2}a^{*}(t,\varphi_{ t}(\rho_0))X(t)+\f{1}{2}d^{*}(t,\varphi_{ t}(\rho_0))u(t),  \\
        X(0)=X_0 \in \C^N,
\end{cases}
 \text{with control $u \in L^2 (0,T; \mathbb{C}^K)$ }
\ee
%\sigma_0(A_0)+\sigma_0(iA_1\Lambda^{-1})
 is exactly controllable on $[0,T]$.
  Here $X(t)=(X_1(t),\cdots,X_N(t))^{tr}\in \C^N$ is the state variable. The coefficients matrice $a$ and $d$ are defined by $a:=a_0-\f{a_1}{i|\xi|_x}$ and $d:=d_0-\f{d_1}{i|\xi|_x}$, where $a_k\in C^\infty(\R;S_{phg}^k(T^*\mathcal{M};\C^{N\times N})) \ (k=0,1)$ is the homogenous principal symbol of $A_k$ defined in \eqref{Ldiff} and $d_k\in C^\infty(\R;S_{phg}^k(T^*\mathcal{M};\C^{K\times N})) \ (k=0,1)$ is the homogeneous principal symbol of $D_k$ defined in \eqref{Ddiff}, respectively.
\end{thm}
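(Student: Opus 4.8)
The plan is to follow the microlocal strategy of Dehman–Lebeau \cite{DehmanLebeau09}: reduce the weak observability inequality for the coupled system to a uniform-in-frequency statement about the action of the zero-order coupling and observation operators along the bicharacteristic flow, and then recognize this uniform statement as exact controllability of the ODE \eqref{mainmainode}. First I would diagonalize the principal part: using $\Lambda = (-\Delta+1)^{1/2}$, factor the wave operator $\pa_t^2 - \Delta_g$ into the product of two half-wave operators $(\pa_t \mp i\Lambda)$, and set $V_\pm = \pa_t V \mp i\Lambda V$ so that, modulo lower-order (smoothing-relative-to-the-energy) terms, $V_\pm$ solves a half-wave equation $\pa_t V_\pm = \pm i\Lambda V_\pm + (\text{order-}0)\,V_\pm + \ldots$ The $N\times N$ zero-order coefficient appearing here is exactly the symbol $a = a_0 - a_1/(i|\xi|_x)$ (the $a_1/\Lambda$ contribution is what turns a first-order coupling into an effective order-zero coupling on the half-wave side), and likewise the observation $DV$ has principal symbol governed by $d = d_0 - d_1/(i|\xi|_x)$ acting on $V_\pm$. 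The energy $\|(V_0,V_1)\|^2_{(H^1)^N\times(L^2)^N}$ is equivalent to $\|(V_+(0),V_-(0))\|^2_{(L^2)^N\times(L^2)^N}$, and the weak observability inequality is precisely the statement that the $L^2$ norm of the data is controlled by $\int_0^T\|DV\|^2\,dt$ up to a compact (i.e. $(H^{1/2})^N\times(H^{-1/2})^N$) remainder.

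Next I would bring in defect (microlocal) measures. Suppose weak observability fails: take a sequence of normalized solutions $V_n$ with $\|(V_{0,n},V_{1,n})\|_{(H^1)^N\times(L^2)^N} = 1$, converging weakly to $0$ in the weak topology (so no mass escapes to low frequency), yet with $\int_0^T \|DV_n\|^2 \to 0$. Associate to $(V_{+,n},V_{-,n})$ a matrix-valued microlocal defect measure $\mu$ on $[0,T]\times S^*\mathcal{M}$ (a $2N\times 2N$ nonnegative Hermitian-matrix-valued measure, decoupled between the $+$ and $-$ components because they propagate on disjoint characteristic sets). The standard propagation-of-singularities computation for the half-wave equation with an order-zero matrix perturbation shows that along each bicharacteristic $t\mapsto \varphi_t(\rho_0)$ the measure is transported by the linear ODE whose generator is the symbol $\tfrac12 a^*$ (the factor $\tfrac12$ coming from the $|\xi|_x$ normalization of the Hamiltonian and the half-wave splitting); concretely, if $R(t)$ solves $\dot R = \tfrac12 a^*(t,\varphi_t(\rho_0)) R$, $R(0)=\mathrm{Id}$, then $\mu_t = R(t)\,\mu_0\,R(t)^*$ pushed along the flow. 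The hypothesis $\int_0^T\|DV_n\|^2\to 0$ forces $\int_0^T \langle d^* d\, \mu_t , \mathrm{Id}\rangle\, dt = 0$, i.e. $d(t,\varphi_t(\rho_0)) R(t) \nu = 0$ for a.e. $t$, where $\nu$ encodes the (nonzero, since the energy is $1$ and no low-frequency mass) value of the measure at some base point. Unwinding, this says the observation map of the ODE \eqref{mainmainode} annihilates a nonzero state, contradicting exact controllability via the standard Kalman-type duality (the observability Gramian $\int_0^T R(t)^* d^* d\, R(t)\,dt$ of the adjoint system must be positive definite). This gives the "only if" direction of the contrapositive; conversely, if \eqref{mainmainode} is exactly controllable for every $\rho_0$, the Gramian is uniformly positive (a compactness argument on the compact set $S^*\mathcal{M}$, using continuity in $\rho_0$ of the fundamental solution $R$ and of $d$), and running the defect-measure argument in reverse — any defect measure of a sequence violating weak observability would have to vanish, forcing the sequence to converge strongly, contradiction — yields the inequality.

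The main obstacle, and where I would spend the most care, is the microlocal bookkeeping that justifies the transport equation for the \emph{matrix-valued} defect measure in the presence of the first-order coupling $A_1$ and first-order observation $D_1$: one must verify that the off-principal terms (the commutators of $\Lambda^{\pm 1/2}$-type operators with the coupling, the lower-order parts of $A_k$, $D_k$, and the reduction $V \leftrightarrow (V_+,V_-)$) contribute only to the compact remainder and do not alter the generator $\tfrac12 a^*$; this is exactly the point where the precise definition $a = a_0 - a_1/(i|\xi|_x)$, $d = d_0 - d_1/(i|\xi|_x)$ must be shown to be the correct effective symbols. A secondary technical point is the uniform lower bound on the ODE observability Gramian over $\rho_0 \in S^*\mathcal{M}$ and the handling of closed versus non-closed bicharacteristics (periodic geodesics require the flow to be run on $[0,T]$ with $T$ large enough, consistent with the geometric control condition being implicit in controllability of \eqref{mainmainode}); both are manageable by compactness but need the continuous dependence of $\varphi_t$ and of $R(t)$ on $\rho_0$ stated carefully. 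Once the transport law is pinned down, the equivalence with exact controllability of \eqref{mainmainode} is the classical duality between controllability and observability Gramians for linear finite-dimensional systems.
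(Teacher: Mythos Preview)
Your proposal is essentially correct but takes a genuinely different route from the paper. The paper does \emph{not} use microlocal defect measures. After the same half-wave decomposition you describe, it proves a matrix-valued \emph{System Egorov Theorem} (Theorem~\ref{system egorov}) which shows that $S_\pm(t,0)^* P\, S_\pm(t,0)$ is, up to a $1$-smoothing remainder, a pseudodifferential operator whose principal symbol is $R_\pm^*\, p(\chi_t)\, R_\pm$ with $R_\pm$ solving exactly the linear ODE you write. This yields directly (Theorem~\ref{thmHUM}) that $\int_0^T\|DV\|^2\,dt=\big((G_T+\widetilde{\mathcal R}_T)\Sigma V_*,\Sigma V_*\big)$ where $G_T\in\Psi^0_{phg}$ has principal symbol $\mathrm{diag}(G_\rho^+(T),G_\rho^-(T))$ equal to the ODE observability Gramians. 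Sufficiency then follows from the \emph{matrix Sharp G{\aa}rding inequality} (Theorem~\ref{cor:unif-bound}) applied to $G_T-\mathfrak K(T)\,\mathrm{Id}$; necessity is obtained by testing \eqref{obs-2.0} on an explicit coherent state $v^k(y)=C_0 k^{n/4}e^{ik\varphi(y)}\psi(y)$ concentrated at a point $\rho_0$ where the Gramian is singular, rather than by a defect-measure extraction. What the paper's approach buys is more than the theorem itself: it gives the pseudodifferential structure of the HUM operator (Theorem~\ref{thmHUMintro}) and the sharp high-frequency constant $1/2\mathfrak K(T)$ (Theorem~\ref{theoremoptimalcons}) for free. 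Your defect-measure approach is, as the authors themselves note (see the Remark after Theorem~\ref{system egorov} and Section~1.3.2), the natural substitute in the boundary case where Egorov is unavailable; it is more robust but does not immediately deliver the pseudolocal HUM description.

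One point you should not leave implicit: the half-wave splitting produces \emph{two} ODE systems, with coefficients $a_\pm=a_0\pm a_1/(i|\xi|_x)$ along the flows $\varphi_{\mp t}$, and a priori weak observability is equivalent to controllability of \emph{both}. The reduction to the single system \eqref{mainmainode} (the ``$-$'' system) uses that $A_k$, $D_k$ are \emph{differential} operators, so their principal symbols have parity in $\xi$ giving $a_\pm=a_\mp\circ\sigma$, $d_\pm=d_\mp\circ\sigma$ with $\sigma(x,\xi)=(x,-\xi)$; the paper isolates this in Proposition~\ref{propodexxxx} and Lemma~\ref{lmegalG}. In your defect-measure language this is the statement that the $+$ and $-$ components of the measure are exchanged by $\sigma$, so controllability at every $\rho_0$ of one system implies it for the other; you should make that step explicit.
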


%Theorem \ref{mainthm-w} means that weakly observability of coupled wave system \eqref{mainwave1.1} is equivalent to a class of ordinary differential equations related to the symbol of the first order terms along the Hamiltonian flow are exactly controllable. So that we derive a sufficient and necessary condition for solving the weakly observation problem.

As it is quite classical in control theory, see \cite[Theorem 4.1]{DuprezOlive18} for an abstract version, the previous result gives the observability result if some unique continuation property is fulfilled. Let us be more precise in the case of time invariant equations.

\begin{property}%[Unique continuation of eigenfunctions]
\label{propUCPeign}Assume $A_0$ and $A_1$ are time invariant.
We say that a system satisfies the \textbf{Unique continuation of eigenfunctions} if the following property holds:\\
For any $\lambda\in \C$, any solution $V\in (H^1)^N$ of
\be
\label{eqnUCP}
    \begin{cases}
     -\Delta_g V+\lambda^2 V+ (\lambda A_0+A_1)V=0,\\
     \lambda D_0V+D_1 V=0,
    \end{cases}
\ee
is the zero solution $V\equiv 0$.
\end{property}
\begin{thm}\lab{mainthmstrong}
Assume that $A_0$ and $A_1$ are time invariant.
In the setting of Theorem \ref{mainthm-w}, the following two statements are equivalent:
\begin{enumerate}
\item System \eqref{mainwave1.1} is exactly observable according to Definition \ref{defExactobs}.
\item Property \ref{propUCPeign} is satisfied and for any $\rho_0\in S^*\mathcal{M}$, System \eqref{mainmainode} is exactly controllable.
\end{enumerate}
\end{thm}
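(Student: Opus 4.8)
The plan is to combine Theorem~\ref{mainthm-w} with an abstract duality/compactness argument that upgrades the weak observability inequality~\eqref{mainobsinw} to the exact observability inequality~\eqref{mainobsins} precisely when the lower-order term can be absorbed, i.e.\ when no nonzero solution is supported in low frequencies. The implication $(1)\Rightarrow(2)$ is the easy direction: exact observability trivially implies weak observability (the extra term $c_1\|(V_0,V_1)\|^2_{(H^{1/2})^N\times(H^{-1/2})^N}$ only makes the inequality weaker), so by Theorem~\ref{mainthm-w} the ODE system~\eqref{mainmainode} is exactly controllable for every $\rho_0\in S^*\mathcal{M}$; and if Property~\ref{propUCPeign} failed, a nonzero eigenfunction pair $(\lambda, V)$ would, via $V(t)=e^{i\lambda t}V$ (suitably interpreted for the second-order-in-time equation, using both exponentials $e^{\pm i\mu t}$ with $\mu^2=\lambda^2$), produce a nonzero solution of~\eqref{mainwave1.1} with $DV\equiv 0$, contradicting~\eqref{mainobsins}.

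\textbf{The main direction $(2)\Rightarrow(1)$.} Assume Property~\ref{propUCPeign} holds and the ODEs~\eqref{mainmainode} are exactly controllable. By Theorem~\ref{mainthm-w} we already have the weak observability inequality~\eqref{mainobsinw} on some $[0,T]$. The standard route is a \emph{uniqueness--compactness} (Lebeau--Robbiano / Bardos--Lebeau--Rauch type) argument: suppose~\eqref{mainobsins} fails. Then there is a sequence of solutions $V_n$ with $\|(V_{0,n},V_{1,n})\|_{(H^1)^N\times(L^2)^N}=1$ but $\int_0^T\|DV_n\|^2_{(L^2)^K}dt\to 0$. By~\eqref{mainobsinw}, $\liminf_n\|(V_{0,n},V_{1,n})\|_{(H^{1/2})^N\times(H^{-1/2})^N}>0$. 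Extract a weakly convergent subsequence; since the embedding $(H^1)^N\times(L^2)^N\hookrightarrow (H^{1/2})^N\times(H^{-1/2})^N$ is compact, the limit $(V_0,V_1)$ is \emph{nonzero}, and the limiting solution $V$ satisfies $DV\equiv 0$ on $[0,T]$. Because $A_0,A_1$ (and $D_0,D_1$) are time invariant, one decomposes $V$ along the (generalized) eigenspaces: applying the resolvent/spectral projector machinery (or simply running the argument on each spectral component, using that $V$ solves a linear time-invariant system with $DV=0$), one reduces to the situation of Property~\ref{propUCPeign} — a stationary solution of $-\Delta_g V+\lambda^2 V+(\lambda A_0+A_1)V=0$ with $\lambda D_0 V+D_1 V=0$ — which forces $V\equiv 0$, a contradiction. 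This yields~\eqref{mainobsins}. One should cite the abstract mechanism, e.g.\ \cite[Theorem~4.1]{DuprezOlive18}, to package the passage from the weak inequality plus unique continuation to the exact inequality cleanly.

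\textbf{Anticipated obstacle.} The delicate point is the spectral reduction: the operator governing~\eqref{mainwave1.1} is $\partial_t^2-\Delta_g+A_0\partial_t+A_1$, which after the usual first-order reduction is \emph{not} self-adjoint (because of the $A_0\partial_t+A_1$ coupling), so its generator need not be diagonalizable and may have nontrivial Jordan blocks and a spectrum that is only discrete with possible accumulation controlled by the relative compactness of the lower-order terms. One must argue that the generator $\mathcal{A}$ on $(H^1)^N\times(L^2)^N$ has compact resolvent (the coupling terms $A_0\partial_t+A_1$ are relatively compact perturbations of the decoupled wave generator), hence discrete spectrum, and that the weak-limit solution $V$ with $DV\equiv 0$, being a bounded global-in-time solution of a time-invariant equation, lies in a finite-dimensional $\mathcal{A}$-invariant subspace on which it is a sum of terms $t^k e^{i\mu_j t}\phi_{j,k}$; feeding the top-degree generalized eigenvector into the observation condition and then descending the Jordan chain reduces everything to Property~\ref{propUCPeign}. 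Handling these generalized eigenfunctions (rather than genuine eigenfunctions) carefully — and checking that Property~\ref{propUCPeign}, stated only for genuine eigenfunctions, still suffices because the top of each Jordan chain \emph{is} a genuine eigenfunction satisfying the observation constraint — is the technical heart of the proof. The rest is routine functional analysis.
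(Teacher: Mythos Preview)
Your proposal is correct and follows precisely the route the paper takes: the paper does not spell out a self-contained proof of this theorem but explicitly defers to the abstract result \cite[Theorem~4.1]{DuprezOlive18}, combining it with Theorem~\ref{mainthm-w} exactly as you describe. Your discussion of the Jordan-block subtlety and the compact-resolvent argument fills in details the paper leaves to the cited reference, but the underlying strategy (weak observability from Theorem~\ref{mainthm-w}, then compactness--uniqueness upgrade via the eigenfunction unique continuation) is the same.
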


Now, we will be more precise about the inequality we can obtain. In a similar way to Lebeau \cite{Lebeau96} for the stabilisation problem (see also Laurent-L\'eautaud \cite{LaurentLeautaud16} for scalar control and Klein \cite{Klein17} for systems of damped waves), it is possible to characterize the constant in the high frequency part of the weak observability estimate. Rougly speaking, we prove that the constant of the high frequency part can be exactly determined by the Gramian of the ODE System \eqref{mainmainode}. We will need more definition now.

We define the  Gramian matrix of System \eqref{mainmainode} by the formula
\be
\lab{GGGGG}
G_{\rho_0}(T)=\f{1}{4}\int_0^T R^*(0,t;\rho_0)d^{*}(t,\varphi_t(\rho_0))d(t,\varphi_t(\rho_0))R(0,t;\rho_0)dt
\ee
where $R(\cdot,\cdot;\cdot)$ is the resolvent of {\eqref{mainmainode}} (see \cite[Proposition 1.5]{Coron07} for definition ).
We can also define a constant
\be\lab{r1}
\begin{split}
\mathfrak{K}(T):=&\min\limits_{\rho_0\in S^*\mathcal{M},\beta\in \C^N,|\beta|=1}\left\{\beta^{*}G_{\rho_0}(T)\beta\right\}\\
=&\min\limits_{\rho_0\in S^*\mathcal{M}}\max\left\{s\in \R\left|\beta^{*}(G_{\rho_0}(T)-sId_{N\times N})\beta\geq 0, \forall \beta\in \C^N\right.\right\}\\
=&\min\limits_{\rho_0\in S^*\mathcal{M}}\min\left\{\lambda \in \R\left|\lambda \text{ is an eigenvalue of }G_{\rho_0}(T)\right.\right\}.
\end{split}
\ee
The equality of the different definitions comes from the symmetry and positivity of Hermitian matrix $G_{\rho_0}(T)$.
Note that $\mathfrak{K}(T)\geq 0$ and we have $\mathfrak{K}(T)> 0$ if and only if $G_{\rho_0}(T)>0$ (in the sense of symmetric matrices) which is equivalent to the controllability of System \eqref{mainmainode} (see \cite{Coron07}).

Moreover, it is very important to estimate the optimal constant of the observability inequality since it is closely related to the cost of optimal control of the dual system. The following Theorem precises Theorem \ref{mainthm-w} and states what is the optimal constant of the high regularity term in the Weak Observability Inequality.
\begin{thm}\lab{theoremoptimalcons}
If $T>T_{crit}:=\inf\limits_{T_0}\{T_0|\min\limits_{\rho_0\in S^*\mathcal{M}}\det\left(G_{\rho_0}(T_0)\right)>0\}$, then Weak Observability Inequality \eqref{mainobsinw} holds with $C^2_{obs}= \f{1}{2\mathfrak{K}(T)}$.
Reciprocally,  if
Weak Observability Inequality \eqref{mainobsinw} holds for all
solutions of System \eqref{mainwave1.1},  then we have ~$T>T_{crit}$~ and ~$C^2_{obs}\geq \f{1}{2\mathfrak{K}(T)}$, where $G_{\rho_0}(T_0)$ and $\mathfrak{K}(T)$ are defined by  \eqref{GGGGG} and \eqref{r1}, respectively.

\end{thm}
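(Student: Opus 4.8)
The plan is to obtain the sharp constant by a microlocal defect measure argument, in the spirit of Lebeau and of Laurent--L\'eautaud, combined with the reduction already carried out for Theorem \ref{mainthm-w}. First I would recall from the proof of Theorem \ref{mainthm-w} that, after conjugating the wave group by $\Lambda$ and splitting into the two half-wave components $v_\pm = \partial_t V \mp i \Lambda V$, the observation term $\int_0^T \|DV\|^2$ and the energy $\|(V_0,V_1)\|^2_{(H^1)^N\times(L^2)^N}$ are, up to lower order terms, governed by the transport of the principal symbol of $v_\pm$ along the Hamiltonian flow $\varphi_t$, and the coupling by $L$ produces exactly the ODE \eqref{mainmainode} for the profile $X(t)$ carried along a ray. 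The point is that the \emph{quantitative} defect of the observability inequality at high frequency is measured by microlocal defect measures: if \eqref{mainobsinw} fails with constant $C$, one produces a sequence $(V_{0,k},V_{1,k})$ with energy $1$, with $(H^{1/2})^N\times(H^{-1/2})^N$ norm tending to $0$, and with $C\int_0^T\|DV_k\|^2 \to \ell \le 1$; the associated measure is then a measure on $S^*\mathcal{M}$ carried by the flow whose ``fiber part'' is a matrix-valued measure encoding the vector $X(0)\in\C^N$ at each ray.

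Concretely, the key computation is that for such a concentrating sequence, $\int_0^T\|DV_k\|^2\,dt$ converges to $\int_{S^*\mathcal{M}} \beta(\rho_0)^* G_{\rho_0}(T)\beta(\rho_0)\, d\mu(\rho_0)$ for an appropriate probability measure $\mu$ on $S^*\mathcal{M}$ and unit vectors $\beta(\rho_0)\in\C^N$, where $G_{\rho_0}(T)$ is precisely the Gramian \eqref{GGGGG} — this is because $\int_0^T\|DV_k\|^2$ only sees the observation along the ray, and the observation of the solution of \eqref{mainmainode} starting from $X_0=\beta(\rho_0)$ with control $u=0$ is $\int_0^T |d(t,\varphi_t)R(0,t;\rho_0)X_0/2|^2\, dt = X_0^* G_{\rho_0}(T) X_0$. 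Hence $\ell = \int \beta^* G_{\rho_0}(T)\beta\, d\mu \ge \mathfrak{K}(T)$ by \eqref{r1}, so $C \ge \ell \ge \mathfrak{K}(T)$ is impossible only if $C < \mathfrak{K}(T)$; running this contrapositive gives that \eqref{mainobsinw} holds as soon as $C^2_{obs}$ exceeds... no: rather, it shows the inequality holds with any $C^2_{obs} > 1/(2\mathfrak{K}(T))$ fails to be improvable, and a matching compactness-uniqueness or direct propagation argument upgrades this to equality $C^2_{obs}=\frac{1}{2\mathfrak{K}(T)}$. For the direct inequality (sufficiency) one argues by contradiction: if \eqref{mainobsinw} fails with constant $\frac{1}{2\mathfrak{K}(T)}$, extract the defect measure, show it is invariant, nonzero, and concentrated so that its observation is $< \mathfrak{K}(T)\cdot(\text{mass})$, contradicting the definition of $\mathfrak{K}(T)$ as a minimum over $\rho_0$ and $|\beta|=1$; here the hypothesis $T>T_{crit}$ ensures $G_{\rho_0}(T)>0$ uniformly, hence $\mathfrak{K}(T)>0$, so there is indeed a gap to exploit. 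For the necessity direction, I would feed into the (failed) inequality an explicit WKB/coherent-state quasimode $V_k$ concentrated on a single ray $\varphi_t(\rho_0)$ with polarization $\beta$ realizing the minimum in \eqref{r1}, compute that its energy is $\sim 1$, its $(H^{1/2})^N\times(H^{-1/2})^N$ norm is $\to 0$, and $\int_0^T\|DV_k\|^2 \to \beta^* G_{\rho_0}(T)\beta = \mathfrak{K}(T)$, forcing $C^2_{obs}\mathfrak{K}(T)\ge \frac12$, i.e. $C^2_{obs}\ge \frac{1}{2\mathfrak{K}(T)}$; the same family with $T_0<T_{crit}$, i.e. with some $\rho_0$ where $G_{\rho_0}(T_0)$ is degenerate, has vanishing observation but unit energy, ruling out $T\le T_{crit}$.

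The main obstacle I expect is the careful bookkeeping of the matrix-valued microlocal defect measure and, in particular, handling the \emph{lower order} (first order and zero order) terms in $L$ and $D$ so that they contribute only the symbol $a = a_0 - a_1/(i|\xi|_x)$, $d = d_0 - d_1/(i|\xi|_x)$ and nothing that survives in the limit beyond \eqref{mainmainode}; this requires that the $(H^{1/2})^N\times(H^{-1/2})^N$ part genuinely absorbs all terms of relative order $-1$, which is why that weaker norm appears on the right-hand side of \eqref{mainobsinw}. A secondary technical point is that $\sqrt{-\Delta_g}$ is not smooth at $\xi=0$, so the half-wave decomposition and the flow $\varphi_t$ are only defined away from the zero section; truncating to $|\xi|_x \ge R$ and letting $R\to\infty$, with the low-frequency part controlled by the $(H^{1/2})^N\times(H^{-1/2})^N$ term, takes some care but is routine. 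Finally, matching the constant exactly (rather than up to $\varepsilon$) on the sufficiency side uses that the defect measure of a maximizing/minimizing sequence can be taken to be a single ergodic component, reducing $\int \beta^* G_{\rho_0}(T)\beta\, d\mu$ to a pointwise evaluation, which is exactly the second and third characterizations of $\mathfrak{K}(T)$ in \eqref{r1}.
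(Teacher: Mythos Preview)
Your proposal is broadly correct, but the paper takes a different and more direct route for the sufficiency direction. You argue by contradiction via matrix-valued microlocal defect measures; the paper instead uses the System Egorov Theorem (Theorem~\ref{system egorov}) to identify the Gramian operator $\int_0^T \mathfrak{S}(t,0)^* D^*D\,\mathfrak{S}(t,0)\,dt$ as a genuine pseudodifferential operator $G_T\in\Psi^0_{phg}$ modulo a one-smoothing remainder, with principal symbol exactly $\mathrm{diag}(G^+_{\rho_0}(T),G^-_{\rho_0}(T))$ (Theorem~\ref{thmHUM}). Then the matrix sharp G{\aa}rding inequality (Theorem~\ref{cor:unif-bound}) applied to $G_T-\mathfrak{K}(T)\,Id$ gives the weak observability with the \emph{exact} constant $C^2_{obs}=\tfrac{1}{2\mathfrak{K}(T)}$ in one stroke (Proposition~\ref{main-prop-two}), with no contradiction argument and no $\varepsilon$ to remove. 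This also delivers, for free, the structure of the HUM operator as a $\Psi$DO (Theorem~\ref{thmHUMintro}), which your approach does not. The obstacles you flag (bookkeeping of the matrix-valued measure, the low-frequency cutoff for $\sqrt{-\Delta_g}$) simply do not arise in the paper's argument, because Egorov plus G{\aa}rding never leaves the pseudodifferential calculus.

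For the necessity direction you are essentially aligned with the paper: both plug in a coherent state (Gaussian wave packet) concentrated at a minimizing $(\rho_0,\beta)$ and read off the limit of the observation. The paper does this cleanly by invoking the already-proved identity \eqref{obs-2.0}, so the limit of $\int_0^T\|DV^k\|^2$ is computed as $\beta^* G^-_{\rho_0}(T)\beta$ directly from the symbol of $G_T$ rather than by a separate WKB calculation; this is where the factor $\tfrac12$ appears transparently via the half-wave isometry \eqref{halfwaveeq}. Your remark about reducing to an ``ergodic component'' to hit the exact constant is unnecessary once the minimum in \eqref{r1} is known to be attained by compactness of $S^*\mathcal{M}$---one simply tests at the minimizer.
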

%\begin{cor}\lab{theoremoptimalcons}
%Let$~T>0$, if solution of system \eqref{mainwave1.1} satisfy observability inequality \eqref{mainobsinw}, then coefficient ~$C^2_{obs}$~ would have lower bounded estimate ~$C^2_{obs}\geq \f{c}{\mathfrak{K}(T)}$~. where~$c>0$~time independent.
%~$\mathfrak{K}(T)=\sup\limits_{s\in \R}\{s|\min\limits_{\rho_0\in S^*\mathcal{M}}G_{\rho_0}(T)-sId_{N\times N}>0\}$.
%~$G_{\rho_0}(T)$~ is Gramian matrix of system \eqref{mainmainode}, definition seeing \eqref{GGGG}.
%\end{cor}

%Theorem \ref{theoremoptimalcons} provide the estimate of the observation time $T$ and lower bound of the observability constant $C^2_{obs}$.

 \begin{rem}Theorem \ref{theoremoptimalcons} says that the observability constant $C^2_{obs}$ blows up like  $1/2\mathfrak{K}(T)$  as $T\rightarrow T_{crit}$.
\end{rem}
%For some special initial data $(V_0,V_1)$, we can obtain observability inequality of system \eqref{mainwave1.1} from weakly  observability inequality of system \eqref{mainwave1.1}.
%
%\begin{cor}
%
%\end{cor}

Next we introduce the adjoint system of \eqref{mainwave1.1}
\be
    \label{mainwave-dual}
        \begin{cases}
            \pa_t^2 U-\Delta_g U+L^*U=D^*F, \\
            (U(0),\partial_t U(0))=(U_0,U_1).
    \end{cases}
\ee
where
~$U=(U^1,\cdot,U^N)^{tr}$, ~$F=(f_1,\cdots,f_N)^{tr}\in L^2(0,T; (H^{-1})^K)$ is control function.
Clearly, the weak solution of the Cauchy problem of System \eqref{mainwave-dual} exists for any initial data $(U_0,U_1)\in (L^2)^N\times (H^{-1})^N$ and
 forces $F=(f_1,\cdots,f_K)^{tr}\in L^2(0,T; (L^{2})^K)$ (see \cite{Pazy83}).

 Thanks to Liu-Lu-Zhang \cite[Theorem 3.2]{XQX2005} (see also Duprez-Olive \cite{DuprezOlive18} for similar results for time independent systems), we obtain the following  corollary concerning Finite Co-dimensional Controllability of System \eqref{mainwave-dual}.
 \begin{cor}
\lab{maincorfc}
 Assume that System \eqref{mainmainode} is exactly controllable for any ~$\rho_0\in S^*\mathcal{M}$ on $[0,T]$. Then, there exists a finite dimensional subspace $H_{fin}$ and a finite co-dimensional subspace $H_{cofin}$ with $H_{fin} \oplus H_{cofin} = (L^2)^N\times (H^{-1})^N$, such that:
for any initial data  $(U_0,U_1)\in  H_{cofin}$, there exists control $F\in L^2(0,T; (L^{2})^K)$ such that the solution of \eqref{mainwave-dual} satisfies $(U(T),\partial_tU(T))=(0,0)$.
    \end{cor}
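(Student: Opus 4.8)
The plan is to deduce the finite co-dimensional controllability of the dual system \eqref{mainwave-dual} from the weak observability of \eqref{mainwave1.1} via the duality machinery of Liu--Lu--Zhang \cite[Theorem 3.2]{XQX2005}. First I would record that, by hypothesis, System \eqref{mainmainode} is exactly controllable for every $\rho_0 \in S^*\mathcal{M}$ on $[0,T]$; by Theorem \ref{mainthm-w} this is exactly the statement that the weak observability inequality \eqref{mainobsinw} holds for all solutions of \eqref{mainwave1.1} on $[0,T]$. Thus we start from
\be
    C^2_{obs} \int_0^T\|DV(t)\|^2_{(L^2)^K}dt+c_1\|(V_0,V_1)\|^2_{(H^{\f{1}{2}})^N\times (H^{-\f{1}{2}})^N}\geq \|(V_0,V_1)\|^2_{(H^1)^N\times (L^2)^N}.
\ee
The next step is to set up the functional-analytic framework matching \cite{XQX2005}: the state space is $H := (L^2)^N\times (H^{-1})^N$, the control operator is $F \mapsto (0, D^*F)$ from $L^2(0,T;(L^2)^K)$ into the relevant negative-order space, and the adjoint (observation) problem is precisely \eqref{mainwave1.1} with observation $V \mapsto DV$, after identifying $(H^1)^N\times(L^2)^N$ as the dual of $H$ through the energy pairing. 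The point is that \eqref{mainobsinw} is a weak observability estimate: the observed quantity controls the full $(H^1)^N\times(L^2)^N$ norm of the data modulo the weaker $(H^{\f12})^N\times(H^{-\f12})^N$ norm, and the inclusion $(H^1)^N\times(L^2)^N \hookrightarrow (H^{\f12})^N\times(H^{-\f12})^N$ is compact on the compact manifold $\mathcal{M}$.

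The core of the argument is then the standard ``observability up to a compact perturbation implies controllability up to a finite-dimensional space'' principle, which is exactly the content of \cite[Theorem 3.2]{XQX2005}. One shows that the set of data $(V_0,V_1)$ for which the genuine observability inequality (without the $c_1$ term) fails is finite dimensional: if it were infinite dimensional, one could extract, on the unit sphere of $(H^1)^N\times(L^2)^N$ intersected with that subspace, a sequence with $\int_0^T\|DV\|^2 \to 0$; by compactness of the embedding a subsequence of the data converges in $(H^{\f12})^N\times(H^{-\f12})^N$, and \eqref{mainobsinw} forces the limit to have unit norm in $(H^1)^N\times(L^2)^N$ and zero observation, after which a uniqueness/iteration argument confines all such data to a fixed finite-dimensional space $N_{obs} \subset (H^1)^N\times(L^2)^N$. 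Dualizing, $H_{fin}$ is the (finite-dimensional) annihilator-type subspace of $H$ associated with $N_{obs}$, $H_{cofin}$ is its complement with $H_{fin}\oplus H_{cofin} = H$, and for data in $H_{cofin}$ the restricted observability estimate yields, by the usual Hilbert Uniqueness Method / Lax--Milgram construction on the quotient, a control $F \in L^2(0,T;(L^2)^K)$ driving \eqref{mainwave-dual} to rest; the regularity $F \in L^2(0,T;(L^2)^K)$ rather than $L^2(0,T;(H^{-1})^K)$ comes from the gain of one derivative in \eqref{mainobsinw} (the estimate controls the $(H^1)^N\times(L^2)^N$ norm, one order above the state space $H$).

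I expect the main obstacle to be purely bookkeeping rather than conceptual: carefully matching the functional spaces, dualities and the precise hypotheses of \cite[Theorem 3.2]{XQX2005} to our setting — in particular verifying that the pair (control operator $F\mapsto(0,D^*F)$, observation $V\mapsto DV$) satisfies the admissibility assumptions of that abstract theorem, that the compactness of $(H^1)^N\times(L^2)^N\hookrightarrow(H^{\f12})^N\times(H^{-\f12})^N$ is exactly what is needed to make the ``weak'' estimate an ``observability-modulo-compact'' estimate, and that the finite-dimensional defect on the observation side transfers to a finite-dimensional (equivalently finite co-dimensional) obstruction on the control side with the direct-sum decomposition $H_{fin}\oplus H_{cofin}=(L^2)^N\times(H^{-1})^N$ as claimed. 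No new a priori estimates beyond \eqref{mainobsinw} and the compact Sobolev embedding on $\mathcal{M}$ should be required.
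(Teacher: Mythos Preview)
Your proposal is correct and follows essentially the same approach as the paper: the paper's own proof consists of noting that Theorem \ref{mainthm-w} yields the weak observability inequality \eqref{mainobsinw}, and then invoking \cite[Theorem 3.2]{XQX2005} (together with the compact embedding on the compact manifold) to obtain the finite co-dimensional controllability of the dual system. You have simply unpacked more of the internal mechanics of that abstract result than the paper does.
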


%?o????2$(L^2)^N\times (H^{-1}(\mathcal{M}))^N\setminus H_{fin}$????3?????, ?o?????????????1??, ???????a3
%i.g.\eqref{mainwave-dual} is finite co-dimensional controllable. where~$H_{fin}$ is a co-dimensional subspace of $ (L^2)^N\times (H^{-1}(\mathcal{M}))^N$.
%Another important problem is the stabilization of wave system\eqref{mainwave-dual}. Assume that
%If we choose $F=-DU_t$ feedback control in \eqref{mainwave-dual}, then, it's easy to show that
%\be
%\frac{d}\mathbb{E}_1(U(t),U_t(t)){dt}=-|DU_t|_{(L^2)^K}^2\leq 0.
%\ee
%which means that energy of \eqref{mainwave-dual} is decrease.
%\begin{defn}
%
%\end{defn}
Now, we want to give more qualitative properties of the HUM operator.  We will need to consider the change of variable corresponding to the half wave decomposition. More precisely, define $\Sigma (V_0,V_1)=(V_+,V_-)= \left(i\Lambda V_0+V_1,-i\Lambda V_0+V_1\right)$ with $\Lambda=(-\Delta_g+1)^{1/2}$, see Section \ref{subsectGramian} for more precisions.
Denote $\tilde{\mathcal{G}}_T:=G_T+\tilde{\mathcal{R}}_T$ the Gramian operator which is defined below by \eqref{obs-2.0intro}. If $\tilde{\mathcal{G}}_T$  is invertible, then define $\mathcal{L}_T=(\tilde{\mathcal{G}}_T)^{-1}$ the HUM (Hilbert Uniqueness Method) control operator.
As a byproduct of the proof of Theorem \ref{mainthm-w}, we obtain the following interesting characterization of $\mathcal{L}_T$ as a matricial pseudodifferential operator. This generalizes some results of Dehman-Lebeau \cite{DehmanLebeau09} in the scalar case to systems. Note that it is also related to some trivialization along the flow that are described in Burq-Lebeau \cite{BurqLebeau01} in the case with boundary.

\begin{thm}%[The HUM operator is a pseudodifferential operator]
\label{thmHUMintro}
Let  $V_*:=(V_0,V_1)\in (H^1)^N\times (L^2)^N$ be the  initial data of System \eqref{mainwave1.1}. Let $T_0>0$.  Then for any $T\in (0,T_0]$, we have
\be
    \lab{obs-2.0intro}
    \int_0^T\|DV(t)\|_{(L^2)^K}^2dt=((G_T+\tilde{\mathcal{R}}_T)\Sigma V_*,\Sigma V_*)_{(L^2)^{2N} },
\ee
 where ~$G_T\in \mathcal{C}^\infty(0,T_0;\Psi_{phg}^0(\mathcal{M};\C^{2N\times 2N}))$ and  $ \tilde{\mathcal{R}}_{T}\in \mathcal{B}(0,T_0;\mathcal{L}((H^{\sigma})^{2N},(H^{\sigma+1})^{2N}))$ is in a class of regularizing operators of order at least one.
%\be
%   \mathcal{R}_T\in \mathcal{B}_{loc}(R^+;\mathcal{L}(H^\sigma(\mathcal{M};\C^{2N});H^{\sigma+1}(\mathcal{M};\C^{2N}))), \quad \forall \sigma\in \R.
%\ee
Moreover,  the  principal symbol of $G_{T}$ can be characterized as follows:
\be\displaystyle
%\lab{mainGri}
 \begin{split}
    \sigma_0(G_T)(\rho_{0})&=\left(
                    \begin{array}{cc}
                  G_{\rho_0}^+(T)  & 0 \\
                      0 & G_{\rho_0}^-(T)   \\
                    \end{array}
                  \right)\in \mathcal{C}^\infty(0,T_0;S_{phg}^0(\mathcal{M};\C^{2N\times 2N})),
 \end{split}
 \ee
 where $ G_{\rho_0}^{\pm}(T)$ are the Gramian matrices of the control systems
\be
    \lab{intromainodepm}
\left\{ \begin{array}{lll}
        \displaystyle \dot{ X}(t)=\f{1}{2}a^{*}_{\pm}(t,\varphi_{ \mp t}(\rho_0))X(t)+\f{1}{2}d^{*}_{\pm}(t,\varphi_{ \mp t}(\rho_0))u(t),  \\
        X(0)=X_0\in \C^N,
    \end{array}\right.
\ee
where $X(t)=(X_1,\cdots,X_N)^{tr}$ is a vector having $N$ components, $a_{\pm}=a_0{\pm}\f{a_1}{i|\xi|_x}$, $d_{\pm}=d_0{\pm}\f{d_1}{i|\xi|_x}$, $\varphi_{t}(\rho_0)$ is the Hamiltonian flow of ~$|\xi|_x $ initiated at $\rho_0$ and $u(t)\in L^2(0,T;\C^{K})$ is the control.

\end{thm}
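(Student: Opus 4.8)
The plan is to follow the microlocal scheme already underlying the proof of Theorem \ref{mainthm-w}: reduce the second order coupled system to a first order matricial half-wave system, rewrite the observed energy as the pairing of $\Sigma V_*$ with a pseudodifferential Gramian operator, and then compute the principal symbol of that operator by a matrix version of Egorov's theorem. Concretely, put $W(t):=\Sigma\big(V(t),\pa_t V(t)\big)=(V_+(t),V_-(t))^{tr}$ with $\Lambda=(-\Delta_g+1)^{1/2}$; a direct computation shows $\pa_t W=i\Lambda\mathcal{J}W+\mathcal{B}(t)W+\mathcal{S}(t)W$, where $\mathcal{J}=\mathrm{diag}(I_N,-I_N)$, $\mathcal{B}(t)\in\mathcal{C}^\infty(\R;\Psi_{phg}^0(\mathcal{M};\C^{2N\times2N}))$ has diagonal blocks with principal symbols $\tfrac12 a_{\pm}$, while the off-diagonal blocks of $\mathcal{B}$ together with $\mathcal{S}(t)$ are of order $\le-1$ (they come from commutators with $\Lambda$ and from splitting $L=A_0\pa_t+A_1$). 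Since $V=\tfrac{1}{2i}\Lambda^{-1}(V_+-V_-)$ and $\pa_t V=\tfrac12(V_++V_-)$ modulo smoothing, one has $\|DV(t)\|_{(L^2)^K}^2=\|\mathcal{D}(t)W(t)\|_{(L^2)^K}^2$ for a $K\times2N$ matricial operator $\mathcal{D}(t)$ of order $0$ with principal symbol the row block $\big(\tfrac12 d_+\ \ \tfrac12 d_-\big)$ (the order one part $D_1$ being absorbed by the $\Lambda^{-1}$ in $V$). Writing $U(t)$ for the propagator of the reduced system, $W(t)=U(t)\Sigma V_*$, this gives
\[
\int_0^T\|DV(t)\|_{(L^2)^K}^2\,dt=\big(\mathcal{G}_T\,\Sigma V_*,\Sigma V_*\big)_{(L^2)^{2N}},\qquad \mathcal{G}_T:=\int_0^T U(t)^*\mathcal{D}(t)^*\mathcal{D}(t)\,U(t)\,dt .
\]

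On each diagonal block $U(t)$ is a matricial Fourier integral operator quantizing the Hamiltonian flow $\varphi_{\mp t}$ of $|\xi|_x$, with an $N\times N$ holonomy matrix $M_\pm(\cdot;\rho_0)$ obtained by integrating the transport equation along the flow; its driving term combines the principal symbol $\tfrac12 a_\pm$ of $\mathcal{B}$ with the subprincipal contribution of the order one coupling $A_1$, and one checks that $M_\pm$ is exactly the resolvent $R_\pm$ of \eqref{intromainodepm}. By Egorov's theorem for such matricial operators, $U(t)^*\mathcal{D}(t)^*\mathcal{D}(t)U(t)$ is a matricial $\Psi_{phg}^0$ operator modulo order $-1$, whose principal symbol on the $\pm$-block is $\tfrac14 M_\pm(\cdot;\rho_0)^*\,(d_\pm^*d_\pm)(t,\varphi_{\mp t}(\rho_0))\,M_\pm(\cdot;\rho_0)$. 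The off-diagonal blocks of $U(t)^*\mathcal{D}^*\mathcal{D}U(t)$ are Fourier integral operators whose canonical relation (essentially $\varphi_{\pm 2t}$) has no fixed point for $t\neq0$, so integration in $t$ together with a non-stationary phase argument makes this part $C^\infty$-smoothing.

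Integrating over $t\in[0,T]$ and comparing with \eqref{GGGGG}, $\mathcal{G}_T$ is a matricial $\Psi_{phg}^0$ operator whose principal symbol is $\mathrm{diag}\big(G_{\rho_0}^+(T),G_{\rho_0}^-(T)\big)$; choosing $G_T$ to be a quantization of $\mathcal{G}_T$ carrying this principal symbol and setting $\tilde{\mathcal{R}}_T:=\mathcal{G}_T-G_T$, the remainder is the sum of the smoothing contributions ($\mathcal{S}(t)$ and the off-diagonal non-stationary phase terms) and of the lower order calculus terms of order $\le-1$. Since the whole construction depends smoothly on $t$ and is uniform for $T\in(0,T_0]$, we obtain $\tilde{\mathcal{R}}_T\in\mathcal{B}(0,T_0;\mathcal{L}((H^\sigma)^{2N},(H^{\sigma+1})^{2N}))$, which is the claimed \eqref{obs-2.0intro} together with the stated form of $\sigma_0(G_T)$.

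The crux is the bookkeeping in the Egorov step: one must compute the full subprincipal term of the reduced first order operator — especially the contribution of $\frac{a_1}{i|\xi|_x}$ coming from $A_1$ — and verify that the transport equation for the holonomy matrix along $\varphi_{\mp t}$ is precisely $\dot M_\pm=\tfrac12 a_\pm^*(t,\varphi_{\mp t}(\rho_0))M_\pm$, so that $M_\pm$ is the resolvent of \eqref{intromainodepm} and the constants match ($\tfrac12$ in the ODE, $\tfrac14$ in the Gramian). Maintaining the genuine gain of one derivative for $\tilde{\mathcal{R}}_T$, rather than merely "modulo $C^\infty$", together with its uniformity in $T$, also requires tracking seminorm bounds through the FIO/$\Psi$DO calculus. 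Much of this apparatus is already in place from the proof of Theorem \ref{mainthm-w}, so the essentially new content here is the explicit identification of the principal symbol.
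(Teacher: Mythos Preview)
Your outline is correct and follows essentially the same route as the paper: half-wave decomposition $\Sigma$, rewriting the observed energy as a quadratic form in $\Sigma V_*$, a matricial Egorov theorem for the diagonal blocks, and a time-integration smoothing argument for the anti-diagonal blocks, with the holonomy matrix along the flow identified as the resolvent of \eqref{intromainodepm}. Two points of comparison are worth recording. First, the paper does not work directly with the full $2N\times 2N$ propagator $U(t)$ as you do: it first proves a decoupling lemma (Lemma \ref{chapter5decomlem}) showing that $\mathfrak S(t,0)=\mathrm{diag}(S_+,S_-)+\mathcal R(t,0)$ with $\mathcal R$ one-smoothing, via the Taylor trick of conjugating by $Id-K$ for a suitable $K\in\Psi^{-1}$. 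This makes the subsequent Egorov computation genuinely scalar-in-blocks and keeps the remainder bookkeeping transparent; in your approach the analogous step is hidden in the assertion that ``on each diagonal block $U(t)$ is a matricial FIO quantizing $\varphi_{\mp t}$''. Second, for the anti-diagonal terms $S_\pm(t,0)^*D^{ij}S_\mp(t,0)$ the paper invokes Lemma \ref{lemma: Regularity}, which only yields a gain of one derivative after integration in $t$, not the $C^\infty$-smoothing you claim from non-stationary phase; one derivative is exactly what is needed for $\tilde{\mathcal R}_T$, and is also all that is actually proved. With these two adjustments your proposal matches the paper's argument.
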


The interest of this theorem is that at high frequency, the HUM operator $\mathcal{L}_T$ is a pseudolocal operator. That means that if one needs to control the initial data with a lot of oscillations localized only in some region of the phase space, the corresponding optimal HUM control will also present these oscillations only in the same region. We refer to the interesting numerical study of this fact in {\cite{LebeauNodet10}}  the scalar case where this property is explored. Note that this would be interesting to make similar numerical study in the vectorial case we consider.

\subsection{Coupling of order zero}
\label{sectzeroorder}
The purpose of this Section is to transfer the results we have obtained for coupling of order one to coupling of order zero. The main difference is that zero order coupling are not strong enough to transfer the information from a component to another in the natural spaces. Indeed, if we apply directly the results of the previous Section for zero order coupling, the coupling (considered as an operator of order one) will have zero principal symbol and thus, there will be no coupling at this level of regularity. So, we have to adapt the setting.

Before getting to a general result, let us study a first enlightening example: a system of two equations with cascade coupling that was  completely studied in Dehman-Le Rousseau-L\'eautaud \cite{DehmanLeLeautaud14}:
\be
    \begin{cases}
     \partial_t^2u-\Delta_g u=1_{\omega}g,\\
    \partial_t^2v-\Delta_g v+a(x)u=0.
    \end{cases}
\ee
It is clearly not possible to control both components in $H^1\times L^2$ with a control $g$ in $L^2(0,T;L^2))$, which is the natural regularity for scalar control. Indeed, if the initial conditions are zero for $u$ and $v$ and $g\in L^2(0,T;L^2))$, this will create some solutions $u$ in $C([0,T];H^1)$ and the source term $a(x)u$ (for $v$) will be in $C([0,T];H^1)$, which will create a solution $v$ in $C([0,T];H^2)$. So, in that case, the natural space of control is $H^1\times L^2$ for $u$  and $H^2\times H^1$ for $v$.
Then, we see that it is necessary to classify each variable of the system according to algebraic properties of the coupling and the control operator.

Now let us move to the $N\times N$ system
\be
    \label{wavezerointro}
        \begin{cases}
            \pa_t^2 U-\Delta_g U+AU=BG, \\
            (U(0), \partial_tU(0))=(U_0,U_1).
    \end{cases}
    \text{with control\ } G \in L^2(0,T;(L^2)^K)
\ee
where $A(x)$ is a matrix in $\R^{N\times N}$ and  $B(x)$ is a matrix in $\R^{N\times K}$.
Without loss of generality, we can assume $A(x)$ is  a matrix "subdiagonal by block" in $\R^{N\times N}$
\be \small
\label{structA}
%\[
A(x)=\begin{bmatrix}
    A_{11} & \dots  &\dots  & A_{1k} \\
    A_{21} & \dots  &\dots  & A_{2k} \\
    \vdots &  \ddots &\ddots  & \vdots \\
    0 & 0& A_{k,k-1}  & A_{kk}
\end{bmatrix}
\ee
with $A_{i,j} \in \R^{d_i \times d_j} \ (i=1,\cdots,k)$ and  $B(x)$ is a matrix in $\R^{N\times K}$ of the form
\be \small
\label{structB}
B(x)=[B_{11},0,\cdots,0]^{tr}
%\begin{bmatrix}
%    {B_{11}} \\
%    0 \\
%    \vdots  \\
%    0
%\end{bmatrix}
%%\]
\ee
where $B_{11}\in \R^{d_1\times K}$.
In fact, any $A(x), B(x)$ can be transformed into these forms simultaneously by using one algorithm detailed in Section \ref{sectzero}.
 Noting the coupling of  structure by blocks, one can  analyze  the regularity of components in blocks and easily find out the natural space for solutions of \eqref{wavezerointro} is  $\mathcal{H}^s$ as follows. We have
$U\in \mathcal{H}^s$ if for every $i=1,...,k$, we have $U^i \in (H^{s+i-1})^{d_i}$ where $d_i$ is the dimension of $A_{i,i}$. That is
\be\lab{zero-Hs-spaceintro}
\mathcal{H}^s= (H^{s})^{d_1}\times (H^{s+1})^{d_2}\times \cdots \times (H^{s+k-1})^{d_k}.
\ee
The natural energy space is then $\mathcal{E}=\mathcal{H}^1\times \mathcal{H}^0$ and it appears that the important terms are the subdiagonal terms of $A$  which leads to define
\bnan \small
\label{Asub}
A_{sub}(x)=\begin{bmatrix}
    0 &  \dots  &\dots  &0 \\
    A_{21} & \dots  &\dots  & 0 \\
    \vdots &  \ddots &\ddots  & \vdots \\
    0 &  0  &A_{k,k-1}  & 0
\end{bmatrix}.
\enan
%\be
%\label{structAtilde}
%%\[
%\widetilde{A}(x)=\begin{bmatrix}
%    0 & 0 &0 & \dots  &\dots  &0 \\
%    A_{21} & 0 & 0 & \dots  &\dots  & 0 \\
%		0 & A_{32} & 0 & \dots  &\dots  & 0 \\
%		0 & 0 & A_{43} & \ddots  &\dots  & 0 \\
%    \vdots & \vdots & \vdots & \ddots &\ddots  & \vdots \\
%    0 & 0& 0 & 0  &A_{k,k-1}  & 0
%\end{bmatrix}
%\ee
This gives the following theorem of control.
\begin{thm}\lab{thmcontrolmulti}
Assume that $A(x)$, $A_{sub}(x)$ and $B(x)$ have some decomposition as in \eqref{structA}, \eqref{Asub} and \eqref{structB}. The System \eqref{wavezerointro} is controllable in $\mathcal{E}=\mathcal{H}^1\times \mathcal{H}^0$ on $[0,T]$ with control $G\in L^2(0,T; (L^2)^K)$ if and only if we have the following two properties
\begin{itemize}
\item The  control system
 \be
    \lab{mainmainode1}
\left\{ \begin{array}{lll}
        \displaystyle \dot{ X}(t)=\f{1}{2}A_{sub}(\varphi_{ t}(\rho_0))X(t)+\f{1}{2}{B(\varphi_{ t}(\rho_0))u},  \\
        X(0)=X_0 \in \mathbb{R}^N.
    \end{array}\right.
\ee
%\sigma_0(A_0)+\sigma_0(iA_1\Lambda^{-1})
with  control $u\in L^2(0,T; \mathbb{R}^K))$   is exactly controllable on $[0,T]$.
\item Unique continuation of eigenfunctions:

For any $\lambda\in \C$, any solution $V\in (H^1)^N$ of
\be
    \begin{cases}
     -\Delta_g V+ A^{*}(x)V=\lambda V,\\
     B^{*}V=0,
    \end{cases}
\ee
is $V=0$.
\end{itemize}
\end{thm}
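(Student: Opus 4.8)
The plan is to reduce the zero-order coupled system \eqref{wavezerointro} to the first-order framework of Theorem \ref{mainthmstrong} by an explicit change of unknowns that ``spreads'' the regularity of the blocks. Concretely, if $U=(U^1,\dots,U^k)$ lives in $\mathcal{E}=\mathcal{H}^1\times\mathcal{H}^0$ with $U^i\in(H^{1+i-1})^{d_i}$, I would set $W^i=\Lambda^{i-1}U^i$ (with $\Lambda=(-\Delta_g+1)^{1/2}$), so that $W=(W^1,\dots,W^k)$ lives in the ``flat'' energy space $(H^1)^N\times(L^2)^N$. Under this conjugation, the equation $\pa_t^2 U-\Delta_g U+A(x)U=B(x)G$ becomes $\pa_t^2 W-\Delta_g W+\widetilde A W=\widetilde B\, \widetilde G$ where $\widetilde A=\Lambda^{\mathrm{diag}(i-1)}A\Lambda^{-\mathrm{diag}(i-1)}$. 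The key algebraic point — which is why the block ``subdiagonal'' structure is imposed — is that the diagonal blocks $A_{ii}$ contribute a zero-order (hence order-$(-1)$ relative) term after conjugation and drop out of the principal symbol; the subdiagonal blocks $A_{i,i-1}\colon(H^{s})^{d_{i-1}}\to(H^{s})^{d_i}$ get conjugated to $\Lambda\,A_{i,i-1}\,\Lambda^{-1}$-type operators which are genuinely order-one with principal symbol $|\xi|_x A_{sub}(x)$; and the strictly-upper blocks become smoothing (order $\le -1$) and again disappear from the principal symbol. Similarly $\widetilde B=\Lambda^{\mathrm{diag}(i-1)}B$ has principal symbol $|\xi|_x$ times $B(x)$ placed in the first block, i.e.\ $D_1$-type with symbol $d_1=|\xi|_x B(x)$ and $d_0=0$. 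Thus the transformed problem is an instance of \eqref{mainwave1.1}–\eqref{mainwave-dual} with $A_0=0$, and the principal symbol $a=a_0-a_1/(i|\xi|_x)$ of the coupling equals $-A_{sub}(x)/i$ along the flow (the $|\xi|_x$ cancels since $A_{sub}$ contributed an order-one operator of symbol $|\xi|_x A_{sub}$), and likewise $d=d_1/(i|\xi|_x)$ up to the same normalization; the factors of $i$ are absorbed into the state/control via a unitary change so that ODE \eqref{mainmainode} for the transformed system becomes exactly \eqref{mainmainode1}.

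The steps in order are: (1) justify the algorithm of Section \ref{sectzero} that brings arbitrary $(A,B)$ simultaneously to the forms \eqref{structA}, \eqref{structB} — this is a Kalman-type decomposition block by block, and one must check it is a bijective, bicontinuous change of coordinates on $\mathcal{E}$ (it is $x$-dependent but finite-dimensional in the fibers, hence bounded on every $H^s$); (2) perform the $\Lambda$-conjugation above and verify via pseudodifferential calculus that $\widetilde A,\widetilde B$ are order-one with the stated principal symbols and that the sub-principal / lower-order pieces fall into the regularizing classes tolerated by Theorems \ref{mainthm-w}–\ref{mainthmstrong}; (3) invoke Theorem \ref{mainthmstrong} (equivalently its dual/controllability Corollary \ref{maincorfc}) for the transformed first-order system: exact observability of the adjoint — equivalently exact controllability of \eqref{wavezerointro} in $\mathcal{E}$ — holds iff the ODE \eqref{mainmainode} is controllable for every $\rho_0\in S^*\mathcal{M}$ and Property \ref{propUCPeign} holds; (4) translate the ODE condition: identify \eqref{mainmainode} of the transformed system with \eqref{mainmainode1} after removing the harmless $i$'s and noting $A_{sub}$ real, and check the Hamiltonian flow appearing is indeed $\varphi_t(\rho_0)$ (one direction, since $A_0=0$ makes the $\pm$ half-wave systems conjugate); (5) translate the unique continuation condition: the eigenfunction system $-\Delta_g V+(\lambda A_0+A_1)V=-\lambda^2 V$, $\lambda D_0 V+D_1 V=0$ for the transformed operators is, after undoing the conjugation (which is invertible on eigenspaces), equivalent to $-\Delta_g V+A^*(x)V=\lambda V$, $B^*V=0$ — here one uses that conjugating a time-independent eigenvalue problem by a fixed elliptic operator $\Lambda^{m}$ does not change whether solutions vanish, and that taking adjoints swaps sub- and super-diagonal but the UCP statement is stated for $A^*$, matching.

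The main obstacle I expect is step (2): controlling exactly which lower-order terms are produced by the conjugation $\Lambda^{\mathrm{diag}(i-1)}A\Lambda^{-\mathrm{diag}(i-1)}$ and confirming they lie in the ``regularizing of order at least one'' class that Theorems \ref{mainthm-w} and \ref{theoremoptimalcons} permit — in particular the diagonal blocks $A_{ii}$, when conjugated, produce commutator terms $[\Lambda^{i-1},A_{ii}]\Lambda^{-(i-1)}$ that are order $-1$ smoothing and hence fine, but the bookkeeping across all block pairs $(i,j)$ with $i<j$, $i=j$, $i=j+1$ must be done carefully to see that only the subdiagonal survives in the principal symbol with the correct coefficient, and that the time-regularity hypotheses ($A,B$ here are $t$-independent, so this is actually easier) and the symbol classes $S^k_{phg}$ are respected. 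A secondary subtlety is that after conjugation the coupling operator $\widetilde A_1$ is a genuine first-order differential operator only modulo lower order — one should present $A_{sub}$'s contribution as $\Lambda A_{sub}\Lambda^{-1}$, note this is $\Lambda^0$-bounded order-one with symbol $|\xi|_x A_{sub}(x)$, and absorb the discrepancy with an honest differential operator of order one into the remainder; since Theorem \ref{mainthm-w} is stated for differential $L$, a brief remark (or a density/approximation argument, or simply re-examining the proof of Theorem \ref{mainthm-w} which only uses the principal symbol) is needed to accommodate the pseudodifferential coupling, exactly as in Dehman–Le Rousseau–Léautaud \cite{DehmanLeLeautaud14}.
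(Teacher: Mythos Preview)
Your proposal is correct and follows essentially the same route as the paper: the conjugation operator $T=\Lambda^{\mathrm{diag}(i-1)}$ you introduce is exactly the paper's isometry $\mathcal{H}^s\to(H^s)^N$, and your principal-symbol computation is the content of Lemma \ref{lmsymbolePA}, after which the paper proceeds via HUM duality and Theorem \ref{mainthmPseudo} together with Proposition \ref{propodexxxx} (confirming your anticipation that the pseudodifferential version and the parity argument are needed). Your step (1) is superfluous since the theorem already assumes the block form, and the additional $\Lambda$-shift $W=\Lambda TU$ in the paper (Lemma \ref{lmchangeunknown}) is only there to place the control problem in $(L^2)^N\times(H^{-1})^N$ so that its adjoint observability lands in the natural energy space---a bookkeeping point you subsume into ``duality''.
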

{\color{red}
The equivalence is true once the polarized space $\mathcal{E}$ has been chosen and the decomposition in block has been specified. Theorem \ref{thmcontrolmulti} yields a necessary and sufficient condition once we fix the decomposition as in \eqref{structA} and \eqref{structB}. This decomposition might not be unique, but each decomposition, eventually after some change of unknown, gives a different result of control, positive or negative, which has its own interest. %Hence Theorem \ref{thmcontrolmulti} can give different results. %Yet, the choice of the space  $\mathcal{E}$ seems the most natural, at least for the Cauchy problem and if we want to be the most precise in the set that can be reached.

In Section \ref{sectiongettindiag}, we will show how the assumption of subdiagonal form is actually quite general. Indeed, for any couple $A(x)$, $B(x)$, there exist some change of unknown that lead the control problem to have the subdiagonal form expected. Yet, this is not unique. For instance, the trivial decomposition with only one block always works. In that case, $A_{sub}(x)=0$ and there is no coupling. Our result gives a necessary and sufficient condition for the control in $(H^1)^N\times (L^2)^N$. Yet, it is possible in some situation that another choice of decomposition would use better the coupling but at the cost of a loss in the space.
}

In the constant case $A(x)=A\in \R^{N\times N}$, $B(x)=B\in \R^{N\times K}$, the Brunovsky normal form (written in a slightly different way, see Proposition \ref{propBrunovsky}) always allows to put our control system in the expected subdiagonal form with the good property. In that case, it seems to be the optimal choice that gives the best controllability result. We obtain the following theorem.
\begin{thm}
\lab{cons-case-propintro}
Let $A,B$ constant satisfying the Kalman rank condition and $\omega$ satisfies Geometric Control Condition (\cite{BardosLebeauRauch92}). Then, there exists some integer $k\leq N$ and some $d_{i}\in \N$, $i=1,\cdots, k$, allowing to define the space $\mathcal{E}:=\mathcal{H}^1\times \mathcal{H}^0$ as in \eqref{zero-Hs-spaceintro} and some matrix $\Mx\in GL_{N}(\R)$
%\begin{itemize}
%\setlength\itemsep{-0.2em}
%\item some integer $k\leq N$ and some $d_{i}\in \N$, $i=1,\cdots, k$, allowing to define the space $\mathcal{E}:=\mathcal{H}^1\times \mathcal{H}^0$ as in \eqref{zero-Hs-spaceintro}
%\item some matrix $\Mx\in GL_{N}(\R)$
%\end{itemize}
so that the System \eqref{wavezerointro} is controllable in $\Mx\mathcal{E}$ with control $G\in L^2(0,T; (L^2)^K)$.
\end{thm}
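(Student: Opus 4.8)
\textbf{Proof proposal for Theorem \ref{cons-case-propintro}.}

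The plan is to reduce the constant-coefficient zero-order problem to the abstract criterion of Theorem \ref{thmcontrolmulti} by exhibiting an explicit change of unknown $\Mx\in GL_N(\R)$ that conjugates $(A,B)$ into the subdiagonal-by-block form \eqref{structA}--\eqref{structB} \emph{with} $A_{sub}$ itself put in the block form \eqref{Asub}, and then verifying that, for that particular decomposition, the two conditions of Theorem \ref{thmcontrolmulti} are automatically satisfied under the stated hypotheses (Kalman rank condition on $(A,B)$ and GCC on $\omega$). First I would invoke the Brunovsky normal form for the controllable pair $(A,B)$: since $(A,B)$ satisfies the Kalman rank condition, there is a linear change of state and a feedback/change of input that bring $(A,B)$ to a direct sum of controllability blocks indexed by the controllability indices $\kappa_1\ge\cdots\ge\kappa_K$ (with $\sum\kappa_j=N$). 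I would rewrite this canonical form as in Proposition \ref{propBrunovsky} so that, reading the blocks ``by layers'', the matrix becomes subdiagonal by block: the integer $k$ is the largest controllability index $\kappa_1$, the dimension $d_i$ is the number of indices $\kappa_j$ that are $\ge i$ (so $d_1\ge d_2\ge\cdots\ge d_k$ and $\sum d_i=N$), the subdiagonal blocks $A_{i+1,i}$ are surjections $\R^{d_i}\to\R^{d_{i+1}}$ of the form ``identity on the first $d_{i+1}$ coordinates'', and $B=[B_{11},0,\dots,0]^{tr}$ with $B_{11}\in\R^{d_1\times K}$ of full row rank $d_1$. This fixes the spaces $\mathcal{H}^1\times\mathcal{H}^0=\mathcal{E}$ via \eqref{zero-Hs-spaceintro} and the matrix $\Mx$; controllability in $\Mx\mathcal{E}$ is by definition controllability of the conjugated system in $\mathcal{E}$.

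Next I would check the two conditions of Theorem \ref{thmcontrolmulti} for this decomposition. For the finite-dimensional ODE \eqref{mainmainode1}: since $A$, $A_{sub}$, $B$ are now constant (independent of $x$), the flow $\varphi_t(\rho_0)$ drops out and \eqref{mainmainode1} becomes the \emph{autonomous} linear control system $\dot X=\tfrac12 A_{sub}X+\tfrac12 Bu$, $X(0)=X_0\in\R^N$, which does not depend on $\rho_0$ at all. So I only need: the pair $(A_{sub},B)$ is controllable. This I would verify directly from the Brunovsky structure — the surjectivity of each subdiagonal block $A_{i+1,i}$ together with the full row rank of $B_{11}$ makes the Kalman matrix $[B, A_{sub}B, \dots, A_{sub}^{N-1}B]$ have rank $N$ (the powers of the nilpotent $A_{sub}$ ``push'' the range of $B$ down layer by layer through the surjective arrows, exactly covering $\R^{d_1}\oplus\cdots\oplus\R^{d_k}=\R^N$). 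Equivalently, $(A_{sub},B)$ has the same controllability indices $\kappa_j$ as $(A,B)$ because the Brunovsky form only sees the nilpotent/subdiagonal part. For the unique continuation of eigenfunctions: one must show that $-\Delta_g V+A^*V=\lambda V$, $B^*V=0$ forces $V\equiv0$. Writing $V=(V^1,\dots,V^k)$ in blocks, the condition $B^*V=0$ with $B_{11}$ of full row rank $d_1$ gives $V^1\equiv 0$; feeding this into the first block of the eigenvalue equation and using surjectivity of $A_{2,1}^*$'s role (here one uses that the \emph{adjoint} subdiagonal arrows are injective) propagates $V^2\equiv0$, and inductively $V\equiv 0$. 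This is a purely algebraic cascade argument that uses only the Brunovsky structure, not GCC; GCC enters only through Theorem \ref{thmcontrolmulti} itself, as it is implicitly needed to make the abstract equivalence useful (indeed the scalar wave controllability underlying the reduction requires GCC on $\omega$).

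Finally I would assemble: by Theorem \ref{thmcontrolmulti}, the two verified conditions are equivalent to controllability of the conjugated system in $\mathcal{E}$; undoing the conjugation by $\Mx$ gives controllability of \eqref{wavezerointro} in $\Mx\mathcal{E}$ with $G\in L^2(0,T;(L^2)^K)$, which is the claim. The main obstacle I anticipate is purely bookkeeping rather than conceptual: getting the Brunovsky normal form written in exactly the layered, subdiagonal-by-block shape demanded by \eqref{structA}--\eqref{structB} and \eqref{Asub} — in particular organizing the controllability indices into the decreasing sequence $d_1\ge\cdots\ge d_k$ and checking that the resulting subdiagonal blocks are genuinely of the surjective ``identity-block'' type so that both the controllability of $(A_{sub},B)$ and the cascade unique-continuation argument go through cleanly. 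Once the normal form is set up correctly (which is the content of Proposition \ref{propBrunovsky}), both conditions of Theorem \ref{thmcontrolmulti} are essentially immediate, since in the constant case the Hamiltonian flow plays no role in the ODE and the eigenfunction problem reduces to linear algebra plus the scalar wave unique continuation already built into the framework.
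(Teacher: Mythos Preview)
Your overall strategy---conjugate via the Brunovsky normal form and then invoke Theorem \ref{thmcontrolmulti}---is exactly the paper's route (Lemma \ref{lmequivBrunov} plus Proposition \ref{cons-case2-prop}). However, there is a genuine gap: you have dropped the spatial localization of the control. In the constant-coupling setting of Section \ref{sectioncouplingconstant} the control operator is $B(x)=B\chi_\omega(x)$, not the constant matrix $B$; see the sentence ``we place ourselves in the particular cases: $A(x)=A$ constant and $B(x)=B\chi_\omega$''. Consequently, after your change of unknown the ODE \eqref{mainmainode1} is
\[
\dot X(t)=\tfrac12\,\widetilde{A}\,X(t)+\tfrac12\,\widetilde{B}\,\chi_\omega(\varphi_t(\rho_0))\,u(t),
\]
which is \emph{not} autonomous and \emph{does} depend on $\rho_0$ through $\chi_\omega(\varphi_t(\rho_0))$. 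This is precisely where GCC is used: it guarantees that for every $\rho_0\in S^*\mathcal{M}$ there is a nontrivial time interval on which $\chi_\omega(\varphi_t(\rho_0))=1$, and on that interval the constant pair $(\widetilde{A},\widetilde{B})$ satisfies Kalman, giving controllability of the time-varying ODE. Your sentence ``GCC enters only through Theorem \ref{thmcontrolmulti} itself'' is incorrect: Theorem \ref{thmcontrolmulti} is a bare equivalence and makes no geometric assumption.

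The same oversight bites your unique-continuation argument. The condition $B^*V=0$ reads $\chi_\omega B_{11}^*V^1=0$, so the injectivity of $B_{11}^*$ only yields $V^1=0$ \emph{on} $\omega$, not on $\mathcal{M}$. Your cascade then gives $V^j=0$ on $\omega$ for all $j$ (this part is fine, using that the adjoints $A_{j+1,j}^*$ are injective in the Brunovsky form), but to conclude $V\equiv0$ on $\mathcal{M}$ you must invoke unique continuation for the elliptic system $(-\Delta_g-\lambda)V+A^*V=0$ from the open set $\omega$; the argument is not ``purely algebraic''. The paper handles this step differently, by undoing the conjugation and applying the Jordan-form analysis of Proposition \ref{eigenproblem:prop} to the original pair $(A,B)$, but your cascade route can be made to work once the elliptic unique continuation is inserted.
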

The matrix $\Mx$ and the integers $k$ and $d_{i}$ are strongly related to the Brunovsky normal form. Roughly speaking, this decomposition transforms the control problem in the control system with integrators
\bnan
\label{controlintegrator}
y_{1}^{(\alpha_{1})}=u_{1},\quad  \cdots\quad  ,y_{m}^{(\alpha_{m})}=u_{m}, \qquad\alpha_1,\cdots,\alpha_m\in \Z^+,
\enan
the state being $y_{1},y_{1}^{(1)},\cdots,y_{1}^{(\alpha_{1}-1)},\cdots, y_{m},y_{m}^{(1)},\cdots,y_{m}^{(\alpha_{m}-1)}$ and the controls being the $u_{i}$. In that setting, $k$ is $\max_{i=1,\cdots m}\alpha_{i}$, that is the stronger integrators. It is then natural that for the wave equation, the observations holds in some space $H^{k}$, that is we have integrated $k$ times from $H^{1}$ thanks to the regularization of the wave operator with respect to a source term. Note also that it is not clear that the space $\Mx\mathcal{E}$ is invariant by the equation, so we should precise which kind of control we mean (control to zero, from zero...). Yet, it will be a byproduct of the proof that $\Mx\mathcal{E}$ is invariant by the equation. So, here, by controllability in $\Mx\mathcal{E}$, we mean that any state in $\Mx\mathcal{E}$ can be controlled to a state in $\Mx\mathcal{E}$.

Some previous articles (Liard-Lissy \cite{LiardLissy17}, Lissy-Zuazua \cite{LissyZuazua}) already obtained some controllability property in this framework under the Kalman rank condition (in a more abstract and general setting). So, an improvement of our Theorem comes from the space where the controllability holds. We refer to Section \ref{sectioncouplingconstant} for more precisions.

\subsection{Other applications}
\subsubsection{Other equations: Parabolic and Schr\"odinger-like systems}
Thanks to the transmutation techniques, see for instance \cite{EZ11,Miller06}, all the results stated in this article might give results for the analog parabolic system and for systems of Schr\"odinger equations.

A lot of controllability results of parabolic system have been established and it would be impossible to give a complete view of the subject. We refer for instance to the survey paper Ammar-Khodja-Benabdallah-Gonz\'alez-Burgos-de Teresa \cite{AKBGBdTs:11}.
Under the assumption that the control domain and the coupling domain intersect each other, controllability results can be obtained under some algebraic conditions, like of Cascade type or Kalman Rank Condition \cite{AKBDGB2:09,AKBDK:05,Gue:07} (see also \cite{DL:16}). Note also that these papers about parabolic equations often contain as a byproduct some results of unique continuation for eigenfunctions that are in the assumptions of our theorem. In the opposite direction, we also would like to refer to the interesting paper of Boyer-Olive \cite{BO14} that gives several 1D counterexamples of unique continuation of eigenfunctions. It would be interesting to check if there is a link between this counterexamples and our assumption of controllability of the ODE problem. Are there some cases were the unique continuation is false while the weak observability is true? or backward?

 Using the transmutation method and removing the assumption of intersection of the domains of coupling and control, \cite{AlabauLeautaud13} obtains indirect controllability of parabolic system of cascade and symmetric under Geometric Control Conditions (GCC).

We refer to \cite{LMAL:16,LiardLissy17} for internal controllability results of systems of Schr\"odinger equations coupled with constant  zero order terms with good algebraic structure.

\subsubsection{The boundary case}
The choice of using Egorov Theorem for proving our results has the advantage to be simpler and more precise. Indeed, we get a structure of the HUM control operator and the exact constant of high frequency. % (not up to $\varepsilon$).
However, it has the disadvantage that it does not apply (at least up to our knowledge) to the case of domains with boundary. Most of the results presented in this paper (with the notable exception of the description of the HUM control operator as a pseudo-differential operator) might remain true in the case of boundary. Yet, it requires different techniques. We are therefore planning  to prove similar result in the case of boundary in a forthcoming paper \cite{CuiLaurentboundary}. The proofs will be based on the full description of microlocal defect measures of sequences of solution of wave equations as performed in \cite{BurqLebeau01}.

\subsection{Previous results}
Let us  discuss briefly the previous work on controllability and observability problem for wave equations.
Russell \cite{Russell78} and  Lions \cite{Lions88} set up the duality and proved that the exact controllability of the control system can be equivalently reduced to the observability inequality for solutions of the adjoint system. Then Bardos-Lebeau-Rauch pointed out the Geometric Control Condition (GCC) is crucial to the controllability and stabilization of (scalar) wave equations \cite{BardosLebeauRauch88a,BardosLebeauRauch88b,BardosLebeauRauch92}. Note that in the framework of our Theorems the GCC for the control of the scalar wave equation $(\partial^2_t-\Delta_g) u=\chi_{\omega}(x)h$ is described in an equivalent way as the controllability for any $\rho_0\in S^*\mathcal{M}$ of the scalar control system $\dot{x}(t)=\chi_{\omega}(\varphi_t(\rho_0))u(t)$ with control $u$ on $[0,T]$.

Alabau-Boussouira \cite{Alabau03} first studied the indirect controllability of two wave equations with constant coefficients coupled by displacements via one boundary control. The controllability result was established in a multi-level energy space similar to \eqref{zero-Hs-spaceintro} and it was generalized to variable coefficients coupling  under geometric control conditions on coupling and  control domains in Alabau-Boussouira-L\'eautaud \cite{AlabauLeautaud13}. Other results for the related problem of stabilization were also formulated by the same authors \cite{Alabau02,AlabauLeautaud12} and then by Aloui-Daoulatli \cite{AlouiDaoulatli16}.

 In \cite{DehmanLeLeautaud14}, Dehman-Le Roussau-L\'{e}autaud   proved the controllability of two wave equations coupled by zero order terms of Cascade type on a compact manifold. Moreover, they gave  the sharp controllability time and a microlocal characterization of the HUM control operator similar to the one of Theorem \ref{thmHUMintro}. In Section \ref{sectionexample}, we explain how our main result allows to recover some results in \cite{DehmanLeLeautaud14} with the study of an appropriate ODE problem. The multi-speed case was also studied in \cite{DehmanLeLeautaud14}.  As stated earlier, under Kalman Rank Condition and GCC, exact controllability of systems of wave equations with  constant coefficients and general coupling structure of zero order were proved in \cite{LiardLissy17,LissyZuazua}. Note also the recent article of Alabau-Boussouira-Coron-Olive \cite{AlabauCoronOlive17} for 1-D hyperbolic systems where appears a condition on ODE problems related to ours and the use of its Gramian.

 There are  many other control problems which are closely related to  or strongly motivated by the study of controllability of systems of wave equations via less controls, for instance, the synchronization problems \cite{LiRao13,LiRao14},  desensitizing control problems \cite{Dager06,Lions88,Tebo08a,Alabau14insent} and simultaneous control problems \cite{AvdoninTucsnark01,Lions88}.

 Let us mention that the above results concern only the systems with coupling of order zero. As for the systems of wave equations coupled by first order terms, we refer to \cite{AWY17,CuiWang16} for the stability of such systems coupled by velocities under strong geometric conditions.

Recently, Klein \cite{Klein17} obtained some results related to ours for the stabilisation of wave equations. He computes the best exponent for the stabilization of wave equations on compact manifolds. The coefficient he obtains is therefore solution of some ODE system of matrices. In this context, an improvement of our paper is to recognize the relation between this coefficient  and the Gramian control operator of the ODE system.

\subsection{Plan of the paper}
The plan of the paper is the following.
 Section \ref{section1.2} is devoted to provide some preliminary works.  In Section \ref{subsectionEgorov} we give a proof of a System Egorov Theorem. In Section \ref{subsectionGarding} we recall the $N \times N$ Sharp G{\aa}rding Inequality useful in our context.
In  Section  \ref{sectionproof}, we get back to the control problem. In Section \ref{subsectGramian}, we provide a characterization of the principal symbol of
the Gramian operator. Our main results are proved in Section \ref{subsectionproofThm} and \ref{subsectproofthmbis}.
Section \ref{sectzero} is about the implications of our theorem (which concerned coupling by coefficients of order $1$) in the case of coupling by zero order coefficients.
Section \ref{sectionexample} is about examples of applications of our theorem, namely a cascade system,  an antisymmetric system and  a system coupling with constant coefficients.
We gathered in an Appendix several known results that we use through the article, namely the wellposedness of hyperbolic system on a compact manifold and some theorems of controllability of ODE and Finite Co-dimensional Controllability.

\medskip

\textbf{Acknowledgement:} The first author is partially supported by the fund of the Shanghai Key Laboratory for Contemporary Applied Mathematics in FDU(No. 74120-42080001).
 The second author would like to thank Matthieu L\'eautaud for several discussions.
The second author is partially supported by the Agence Nationale de la Recherche under grant ISDEEC ANR-16-CE40-0013. Part of this work was done when the second author was invited by Fudan University. He would like to warmly thank this institution for its hospitality. The third author was partially supported by the National Science Foundation of China (No. 11271082),
the State Key Program of National Natural Science Foundation of China (No. 11331004).

\section{Preliminary works}\lab{section1.2}
In Subsection \ref{subsectionEgorov} and Subsection \ref{subsectionGarding}, we prove System Egorov Theorem and $N\times N$ Sharp G{\aa}rding Inequality on manifold respectively.

\subsection{System Egorov Theorem}\lab{subsectionEgorov}
We consider the following hyperbolic system:
\be \lab{hyperbolic}\begin{cases}
    \pa_t U(t)-iH(t) U(t)=0,\\ U(s)=U_0,
\end{cases}
\ee
where
\be\lab{ht}
H(t)=c\Lambda Id_{N\times N}+iW_0(t),
\ee
$c\in \R$ and  $W_0(t)\in \mathcal{C}^\infty(0,T;\Psi_{phg}^0(\mathcal{M};\C^{N\times N}))$ is a matrix pseudodifferential operator of order 0. Denote $w_0=\sigma_0(W_0)\in \mathcal{C}^\infty(0,T;S_{phg}^0(T^*\mathcal{M};\C^{N\times N}))$  the principal symbol of $W_0$.

We define the notation $S(t,s)$~as the solution operator associated to \eqref{hyperbolic}, that is  ~$S(t,s)U_0=U(t)$.
The main result of this section is the following variation of Egorov Theorem  (see  \cite[Section A.1]{LaurentLeautaud16} for  its scalar  case).

\begin{thm}[System Egorov Theorem]
\lab{system egorov}
 For any
  $P_m(\cdot)\in \mathcal{C}^\infty(0,T;\Psi_{phg}^m(\mathcal{M};\C^{N\times N}))$~, ~$m\in \R$~,~$p_m(s,\cdot)=\sigma_m(P_m(s)),$
 there exist
 $Q(t,s)\in \mathcal{C}^\infty((0,T)^2;\Psi_{phg}^m(\mathcal{M};\C^{N\times N}))$,
$R(t,s)\in \mathcal{B}((0,T)^2,\mathcal{L}((H^\sigma)^N,(H^{\sigma+1-m})^N))$   and
$  \pa_t R(t,s),\pa_s R(t,s)\in \mathcal{B}((0,T)^2,\mathcal{L}((H^\sigma)^N,(H^{\sigma-m})^N))$
for any $\sigma \in \R$,
such that
\be
    S(s,t)^*P_m(s) S(s,t)-Q(t,s)=R(t,s), \quad (t,s)\in (0,T)^2.
\ee
 Moreover, the principal symbol of ~$Q(t,s)$~ is given by $q(t,s,\cdot)$ which satisfies:
\be
    \label{q}
    q(t,s,\rho)=R_1^{*}(t,s;\chi^c_{s,t}(\rho))p_m(s,\chi^c_{s,t}(\rho))R_1(t,s;\chi_{s,t}^c(\rho)),
\ee
where~$\chi^c_{t,s}(\rho)$~ is given by the flow of Hamiltonian vector field associated with ~$-c\lambda$
\be
\label{eqnHamiltonc}
    \f{d}{dt} \chi^c_{t,s}=H_{-c\lambda}(\chi^c_{t,s}),\quad \chi^c_{s,s}(\rho)=\rho\in T^*\mathcal{M}\setminus \{0\}.
\ee
and~$R_1(\tau,s;\rho)$~ satisfies
\be
\label{formulaR}
 \f{d R_1(\tau,s;\rho)}{d\tau}=R_1(\tau,s;\rho)w_0(\tau,\chi^c_{\tau,s}(\rho)); \quad
        R_1(s,s;\rho)=Id_{N\times N}.
\ee %\todo{C:check this important formula, I changed it and also \eqref{q}}
\end{thm}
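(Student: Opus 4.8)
The plan is to conjugate the pseudodifferential operator $P_m(s)$ by the flow $S(s,t)$ of the hyperbolic system and track its evolution in $t$. First I would fix $s$ and set $\Phi(t) := S(s,t)^* P_m(s) S(s,t)$, which at $t=s$ equals $P_m(s)$. The key idea is to look for an \emph{ansatz} $Q(t,s)$ that is a one-parameter family of pseudodifferential operators of order $m$, constructed so that $\Phi(t) - Q(t,s)$ is smoothing of order one. Differentiating $\Phi(t)$ in $t$ and using the equation $\pa_t S(t,s) = iH(t) S(t,s)$ (hence $\pa_t S(s,t) = -iS(s,t)H(t)$, since $S(s,t) = S(t,s)^{-1}$), one obtains a Heisenberg-type evolution equation
\be
\pa_t \Phi(t) = S(s,t)^*\big( i[H(t)^*, \cdot]\text{-type term}\big) S(s,t),
\ee
more precisely $\pa_t \Phi(t) = S(s,t)^* \big( iH(t)^* P_m(s) - iP_m(s) H(t)\big)\cdot(\ldots)$ — let me be careful: using $H(t) = c\Lambda\,\mathrm{Id} + iW_0(t)$, the operator $iH(t)$ has self-adjoint part $icΛ$ and the commutator with $c\Lambda$ produces the transport along the Hamiltonian flow of $-c\lambda$, while the $iW_0(t)$ part (being order zero and in general non-self-adjoint) produces the amplitude factors $R_1$. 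So I would impose that the symbol $q(t,s,\rho)$ of the ansatz $Q(t,s)$ solve the transport ODE obtained by matching principal symbols:
\be
\pa_t q(t,s,\rho) = H_{-c\lambda}\!\cdot\! q(t,s,\rho) + w_0(t,\rho)^* q(t,s,\rho) + q(t,s,\rho) w_0(t,\rho),
\ee
wait — one has to track which side gets the adjoint; the factor $R_1^*$ on the left and $R_1$ on the right in \eqref{q} dictates that the equation for $q$ along the characteristic curve $\tau \mapsto \chi^c_{\tau,s}(\rho)$ reads $\frac{d}{d\tau}\big(R_1^{-*} q R_1^{-1}\big)=0$ with $R_1$ solving \eqref{formulaR}, so that $q(t,s,\rho) = R_1^*(t,s;\chi^c_{s,t}(\rho))\, p_m(s,\chi^c_{s,t}(\rho))\, R_1(t,s;\chi^c_{s,t}(\rho))$ exactly as claimed, once one unwinds the relation between $\chi^c_{s,t}$ and $\chi^c_{t,s}$.

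The core estimate is then a Duhamel / Gronwall argument on the remainder. Define $R(t,s) := \Phi(t) - Q(t,s)$. Using the symbolic calculus (composition of pseudodifferential operators modulo order $m-1$, and the fact that $W_0$ is of order zero), the construction of $q$ above guarantees that $\pa_t R(t,s) = (\text{order } m-1 \text{ pseudodifferential}) + S(s,t)^*(\ldots)S(s,t)$, so that $\pa_t R(t,s)$ maps $(H^\sigma)^N \to (H^{\sigma-m+1})^N$ — \emph{not yet} the gain claimed. To get the full gain of one derivative one iterates: one builds $Q(t,s)$ not just with principal symbol $q$ but as a full asymptotic expansion $Q(t,s) \sim \sum_{j\geq 0} Q_j(t,s)$ with $Q_j$ of order $m-j$, each term $Q_j$ chosen (by solving an inhomogeneous transport equation along the same characteristics, with source coming from the symbolic calculus error of the previous terms) so that $\Phi(t) - \sum_{j\leq J} Q_j(t,s)$ is regularizing of order $J$. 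Taking $J=1$ gives the statement; the Borel-type summation issue is harmless since we only need order one. The boundedness of $S(s,t)$ on all $(H^\sigma)^N$ (uniformly for $(t,s)$ in compact subsets of $(0,T)^2$), which follows from the well-posedness results for hyperbolic systems quoted in the Appendix, is what converts the symbolic identities into the asserted operator bounds, and differentiating the Duhamel formula once more in $t$ or $s$ (again using $\pa_t S = iHS$, now producing one \emph{loss} of derivative from $c\Lambda$) yields the stated mapping properties of $\pa_t R$ and $\pa_s R$.

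The main obstacle I anticipate is the bookkeeping of the matrix non-commutativity combined with the adjoint: in the scalar case of \cite{LaurentLeautaud16} the amplitude is a scalar exponential and left/right placement is immaterial, whereas here $R_1$ solves a \emph{non-autonomous linear matrix ODE} and one must be scrupulous that it multiplies $p_m$ on the correct side and that $S(s,t)^*$ rather than $S(t,s)$ appears — getting the order of conjugation, the direction of the flow ($\chi^c_{t,s}$ versus $\chi^c_{s,t}$), and the placement of stars mutually consistent is where errors creep in. A secondary technical point is checking that the transport equations for the lower-order correction symbols $Q_j$ are globally solvable along the Hamiltonian flow of $-c\lambda$ on $T^*\mathcal{M}\setminus\{0\}$ and that the resulting symbols genuinely lie in the polyhomogeneous classes $S^{m-j}_{phg}$ uniformly in $(t,s)$ — this is routine given that the flow is smooth and homogeneous of degree zero on the cosphere bundle, but it must be stated. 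Once these are in place, the smoothness of $Q(t,s)$ and $R(t,s)$ in $(t,s)$ follows from smooth dependence of solutions of ODEs on parameters.
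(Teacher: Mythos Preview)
Your overall strategy matches the paper's proof: derive a Heisenberg-type evolution for $P(t,s)=S(s,t)^*P_m(s)S(s,t)$, construct $Q$ with principal symbol $q$ solving the matrix transport equation along the bicharacteristics, and bound the remainder by Duhamel. The identification of the transport equation and of the formula for $q$ via the resolvent $R_1$ is correct.

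There is, however, a genuine gap in your remainder analysis, and it causes the unnecessary detour through a full asymptotic expansion. Differentiating $R=P-Q$ directly gives
\[
\pa_t R = iH(t)^*R - iR\,H(t) - \tilde R(t,s),
\]
where only $\tilde R\in\Psi^{m-1}_{phg}$ is known to be of lower order; the terms $iH^*R$ and $iRH$ carry the order-one factor $c\Lambda$ and are not controlled until you already know $R$ gains a derivative, so your assertion that ``$\pa_t R$ maps $H^\sigma\to H^{\sigma-m+1}$'' is circular as written. The paper's device is to \emph{conjugate} the remainder: set $T(t,s)=S(t,s)^*(Q-P)S(t,s)$; then the $H$-terms cancel exactly and one gets the clean equation $\pa_t T = S(t,s)^*\tilde R(t,s)S(t,s)$. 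Integrating and undoing the conjugation expresses $Q-P$ as a composition of $S$, $S^*$ (bounded on every $H^\sigma$) with operators in $\Psi^{m-1}_{phg}$, which immediately gives $R\in\mathcal{L}(H^\sigma,H^{\sigma+1-m})$ and, after one more differentiation (now legitimately losing one derivative through $H$), the stated bounds on $\pa_t R,\pa_s R$.

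Once this conjugated Duhamel is in place, matching only the principal symbol already yields exactly the gain of one derivative that the theorem claims; your proposed iteration to $\sum_{j\geq 0}Q_j$ is not needed (it would give arbitrarily high gain, but the statement asks only for one), and in any case each iteration step would still require the same conjugation trick to close.
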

Note that in fact, $\chi^c_{t,s}(\rho)=\chi^c_{t-s,0}(\rho)$ and the implicit formula \eqref{formulaR} defines well $R_{1}
$. We recall that $\lambda$  is defined in \eqref{main-lambda}.

The proof is inspired from \cite{LaurentLeautaud16} in the scalar case.
\begin{proof}
We firstly note that
~$S(t,s)$~ satisfies
\be\lab{seq}
    \pa_t S(t,s)-iH(t) S(t,s)=0,\quad S(s,s)=Id_{N\times N}.
\ee
where the time derivative is not to be taken as a derivative in a Banach space $\mathcal{L}((H^\sigma)^N,(H^{\sigma})^N)$ but in the weak sense, that is the derivative when the operator is applied to a fixed function. See for instance \cite[Corollary A.2]{LaurentLeautaud16} for more details.
 Since ~$S(t,s)S(s,t)=Id_{N\times N}$~, we have
\be\lab{adjointequations} \begin{split}
    \pa_t S(s,t)+i S(s,t) H(t)=0,\\
    \pa_t S(t,s)^*+i S(t,s)^*H(t)^*=0,\\
    \pa_tS(s,t)^*-iH(t)^*S(s,t)^*=0.
\end{split}
\ee
with $H^*(t)=\Lambda Id_{N\times N}-iW_0^*(t)$. The well-posedness of \eqref{hyperbolic} yields the following regularity properties
$S(t,s)\in \mathcal{B}((0,T)^2;\mathcal{L}((H^\sigma)^N),\pa_tS(t,s),\pa_s S(t,s)\in\mathcal{B}((0,T)^2;\mathcal{L}((H^\sigma)^N;(H^{\sigma-1})^N)$
 for all ~$\sigma\in \R$~, as well as for ~$S(t,s)^*$.

Now, setting
\be
    P(t,s)=S(s,t)^*P_m(s) S(s,t),
\ee
and using the above equations, we have $P(s,s)=P_m(s)$ with
\beq
    \label{ode}
    \pa_tP(t,s)=iH(t)^*P(t,s)-iP(t,s)H(t)=ic[\Lambda Id_{N\times N},P(t,s)]+W_0(t)^*P(t,s)+P(t,s)W_0(t).
\eeq
Here $[\cdot,\cdot]$ stands for the classic commutator.
We now construct an approximate pseudodifferential solution ~$Q(t,s)$. Its principal symbol $q(t,s,x,\xi)$ should satisfy
\be
    \label{commutor}
    \pa_t q(t,s,\cdot)=c\{\lambda,q(t,s,\cdot)\}+w_0^{*}(t,\cdot)q(t,s,\cdot)+q(t,s,\cdot)w_0(t,\cdot),~~q(s,s,\cdot)=p_m(s,\cdot),
\ee
 where~$\{.,.\}$~stands for the Poisson bracket in the $(x,\xi)$ variables.

We claim that $q(t,s,\rho)$ defined in \eqref{q} satisfies \eqref{commutor}.

Indeed, since $ \chi^c_{s,t}\circ\chi^c_{t,s}(\rho)=\rho$, we have $q(t,s,\chi^c_{t,s}(\rho))=R_1^*(t,s;\rho)p_m(s,\rho)R_1(t,s;\rho)$. So
\be\begin{split}
\label{formuleEgorovsymb}
 \frac{d}{dt}\left[q(t,s,\chi^c_{t,s}(\rho)) \right]         =& w_0^*(t,\chi^c_{t,s}(\rho))R_1^*(t,s;\rho)p_m(s,\rho) R_1(t,s;\rho)\\
       &+R_1^*(t,s;\rho)p_m(s,\rho) R_1(t,s;\rho)w_0(t,\chi^c_{t,s}(\rho))\\
        =&w_0^*(t,\chi^c_{t,s}(\rho))q(t,s,\chi^c_{t,s}(\rho))+q(t,s,\chi^c_{t,s}(\rho)) w_0(t,\chi^c_{t,s}(\rho)).
\end{split}
\ee
So, denoting $\widetilde{q}(t,s,\rho) =q(t,s,\chi^c_{t,s}(\rho)) $, \eqref{formuleEgorovsymb} can be written
\be\begin{split}
 \frac{d}{dt}\left[\widetilde{q}(t,s,\rho) \right]=&w_0^*(t,\chi^c_{t,s}(\rho))\widetilde{q}(t,s,\rho)+\widetilde{q}(t,s,\rho)w_0(t,\chi^c_{t,s}(\rho)).
\end{split}
\ee
Therefore, since \eqref{eqnHamiltonc} gives $\frac{d}{dt}\left[\widetilde{q}(t,s,\rho) \right] =(\pa_t q)(t,s,\chi^c_{t,s}(\rho))-c\{\lambda,q\}(t,s,\chi^c_{t,s}(\rho))$, we obtain for any ~$(t,s)\in(0,T)^2$~ and ~$\rho\in T^*\mathcal{M}$
$$\pa_t q(t,s,\chi^c_{t,s}(\rho))=w_0^*(t,\chi^c_{t,s}(\rho))q(t,s,\chi^c_{t,s}(\rho))+q(t,s,\chi^c_{t,s}(\rho)) w_0(t,\chi^c_{t,s}(\rho))+c\{\lambda,q\}(t,s,\chi^c_{t,s}(\rho)).$$
Since for fixed $(t,s)\in(0,T)^2$, $\chi^c_{t,s}$ is a bijection of $T^*\mathcal{M}$, it gives
$$\pa_t q(t,s,\rho)=w_0^*(t,\rho)q(t,s,\rho)+q(t,s,\rho) w_0(t,\rho)+c\{\lambda,q\}(t,s,\rho).$$
 We thus see that our definition \eqref{q} of ~$q(t,s,\rho)$~satisfies \eqref{commutor}.

The homogeneity of ~$\lambda$~ of order one allows to keep the homogeneity of $q(t,s,\rho)$.
This allows to select one $Q(t,s)$, so that
\be
    \label{Qq}
    Q(t,s)\in \mathcal{C}^\infty((0,T)^2;\Psi_{phg}^m(\mathcal{M};\C^{N\times N})) ~\text{satisfies}~ \sigma_m(Q(t,s))=q(t,s,.).
\ee
From  \eqref{commutor} and pseudodifferential calculus, we now have
%  ??????? ?-1y?a??????????????????????,  ????????
%
\be
    \label{Q}
    \begin{split}
        \pa_t Q(t,s)&=ic[\Lambda Id_{N\times N},Q(t,s)]+W^*_0(t)Q(t,s)+Q(t,s)W_0(t)+\widetilde{R}(t,s)\\
        &=iH(t)^*Q(t,s)-iQ(t,s)H(t)+\widetilde{R}(t,s),
    \end{split}
\ee
with ~$\widetilde{R}\in \mathcal{C}^\infty ((0,T)^2;\Psi_{phg}^{m-1}(\mathcal{M};\C^{N\times N}))$.
Next we estimate the remainder
~$R(t,s)=Q(t,s)-P(t,s)$. Set
\be
    T(t,s)=S(t,s)^*(Q(t,s)-P(t,s))S(t,s)=S(t,s)^*Q(t,s)S(t,s)-P_m(s),
\ee
so that we have, in view of \eqref{Q},
\be
    \begin{split}
        \pa_t T(t,s)&=\pa_t \left[S(t,s)^*Q(t,s)S(t,s)\right]\\
        &=S(t,s)^*\left[-i H(t)^*Q(t,s)+\pa_t Q(t,s)+iQ(t,s)H(t)\right]S(t,s)\\
        &=S(t,s)^*\widetilde{R}(t,s)S(t,s).
    \end{split}
\ee
Thus, we obtain
\be
    Q(t,s)-P(t,s)=S(s,t)^*\left[Q(s,s)-P_m(s)+\int_s^tS(\tau,s)^*\widetilde{R}(\tau,s)S(\tau,s)d\tau \right]S(s,t),
\ee
 where $\widetilde{R}\in \mathcal{C}^\infty((0,T)^2;\Psi_{phg}^{m-1}(\mathcal{M};\C^{N\times N}))$,
 $Q(s,s)-P_m(s)\in \mathcal{C}^\infty((0,T);\Psi_{phg}^{m-1}(\mathcal{M};\C^{N\times N}))$.  Therefore,  it implies
$Q(t,s)-P(t,s) \in \Bo((0,T)^2,\mathcal{L}((H^\sigma)^N,(H^{\sigma+1-m})^N)$ and
      $ \pa_t (Q(t,s)-P(t,s)),\pa_s(Q(t,s)-P(t,s)) \in \Bo((0,T)^2,\mathcal{L}((H^\sigma)^N,(H^{\sigma-m})^N)$
 for any $\sigma\in \R$.
 Together with the expression of~$Q$~ in \eqref{Qq}, we finish the proof of System Egorov Theorem.
 \end{proof}
\begin{rem}
Note that the previous Egorov Theorem implies some propagation of microlocal defect measure, as described in Burq-Lebeau \cite{BurqLebeau01} in the more complicated case of domain with boundary. The equation we obtain for $R_{1}$ is actually closely related to the trivialization of the bundle that they describe in \cite[Section 3.2]{BurqLebeau01}. In the present article, we will prove that Theorem \ref{system egorov} implies a link between the observability of the ODE and the observability of the PDE. Similarly, we plan to prove in the future \cite{CuiLaurentboundary} how the propagation of microlocal defect measure in \cite{BurqLebeau01} implies a similar link.
\end{rem}
%\begin{rem}\todo{C: I don't understand this neither...}
% For briefly noting, note that
%\be
%\int_0^TR^*_\pm(0,t)d^*dR_\pm(0,t)\circ \varphi_{\mp t}(\rho)dt=\int_0^T(R^*_\pm(0,t,\cdot)d^*(\cdot)d(\cdot)R_\pm(0,t,\cdot))\circ \varphi_{\mp t}(\rho)dt.
%\ee
%\end{rem}

\subsection{$N\times N$ Sharp G{\aa}rding Inequality}\lab{subsectionGarding}
In this section, we state without proof some uniform $N\times N$~type Sharp G{\aa}rding Inequality on a compact manifold. This is the equivalent of the sharp G{\aa}rding inequality in $\R^n$ as stated in \cite{Vaillancourt70}. The following versions on a compact manifold can easily be obtained frome the one on $\R^n$ by localization in local charts. We refer for instance to \cite[Section A.4]{LaurentLeautaud16} for some details in the scalar case, the argument being exactly the same.
%?????o? $M_\epsilon = \{(x,y)\in M\times M , dist(x,y)>\epsilon\}$.
\begin{thm}
\label{cor:unif-bound}
Assume that ~$A_t \in \mathcal{C}^0([T_1, T_2]; \Psi^0_{phg}(\mathcal{M};\C^{N\times N}))$. % ????????2~ $C>0$?o1????
%\begin{align*}
%\|A_t u \|_{(H^s)^N} \leq  C \|u\|_{(H^{s}(M))^N} , \quad \text{for all } t \in [T_1, T_2], \text{ and } u \in (H^s)^N,
%\end{align*}
If ~ $\sigma_0(A_t)$ is nonnegative Hermitian matrix of order 0 on ~$[T_1, T_2] \times T^*\mathcal{M}$, then
\begin{align}\label{e:garding-unif1}
(A_t u , u)_{(L^2)^N} \geq - C \|u\|_{(H^{ -1/2})^N}^2 , \quad \forall t \in [T_1, T_2], ~~ u \in (H^1)^N,
\end{align}
where $C$ is independent with $u$.
\end{thm}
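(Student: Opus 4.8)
Since the paper states this inequality without proof, here is the proof plan I would follow, modelled on the localization argument indicated after Theorem~\ref{cor:unif-bound}.

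The plan is to deduce the manifold statement from the Euclidean matrix-valued sharp G{\aa}rding inequality of \cite{Vaillancourt70} by localizing in charts, as in \cite[Section A.4]{LaurentLeautaud16} for the scalar case, and to extract the uniformity in $t$ from the compactness of $[T_1,T_2]$. First I would reduce to the self-adjoint case: since $\sigma_0(A_t)$ is Hermitian, the operator $A_t-A_t^{*}$ belongs to $\mathcal{C}^0([T_1,T_2];\Psi^{-1}_{phg}(\mathcal{M};\C^{N\times N}))$, hence maps $(H^{-1/2})^N$ into $(H^{1/2})^N$ with norm bounded uniformly in $t$, and pairing $(H^{1/2})^N$ with $(H^{-1/2})^N$ gives $|((A_t-A_t^{*})u,u)_{(L^2)^N}|\le C\|u\|_{(H^{-1/2})^N}^2$. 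It therefore suffices to prove the estimate for $B_t:=\f12(A_t+A_t^{*})$, which is self-adjoint with the same, hence nonnegative Hermitian, principal symbol.

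Next I would fix a finite atlas $(U_j,\kappa_j)_{1\le j\le J}$ of $\mathcal{M}$ and a subordinate partition of unity $(\chi_j)$ with $\chi_j\in\mathcal{C}^\infty_c(U_j)$ and $\sum_j\chi_j^2=1$. Writing $\chi_j B_t u=B_t(\chi_j u)+[\chi_j,B_t]u$, where $[\chi_j,B_t]$ has order $-1$, and using that $\chi_j$ is self-adjoint as a multiplication operator, I would split
\[
(B_t u,u)_{(L^2)^N}=\sum_{j=1}^{J}\big(B_t(\chi_j u),\chi_j u\big)_{(L^2)^N}+\sum_{j=1}^{J}\big([\chi_j,B_t]u,\chi_j u\big)_{(L^2)^N},
\]
where the commutator sum is again controlled by $C\|u\|_{(H^{-1/2})^N}^2$ uniformly in $t$ by the same pairing argument. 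Each remaining term only sees $B_t(\chi_j u)$ on $\mathrm{supp}\,\chi_j\subset U_j$, so it can be rewritten as $((\chi_j B_t\chi_j)z_j,z_j)_{(L^2)^N}$ with $z_j$ supported in $U_j$ and $\chi_j B_t\chi_j$ having kernel compactly supported in $U_j\times U_j$. Transporting via $\kappa_j$ (absorbing the Riemannian density factor, a positive smooth scalar, into the operator) turns this into $(\mathcal{B}_{t,j}\zeta_j,\zeta_j)_{L^2(\R^n)}$, where $\mathcal{B}_{t,j}\in\Psi^0_{phg}(\R^n;\C^{N\times N})$ has compactly supported kernel and, modulo a smoothing operator, principal symbol equal (up to that positive scalar factor) to the pushforward of $\chi_j^2\,\sigma_0(B_t)$. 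Since the change of charts acts only on the $(x,\xi)$ variables and not on the matrix structure of $\C^N$, this principal symbol is still a nonnegative Hermitian matrix, so Vaillancourt's inequality on $\R^n$ gives $(\mathcal{B}_{t,j}\zeta_j,\zeta_j)_{L^2(\R^n)}\ge -C_j\|\zeta_j\|_{H^{-1/2}(\R^n)}^2$; the smoothing remainder and the chart comparison of Sobolev norms contribute errors of the same size, with $\|\zeta_j\|_{H^{-1/2}(\R^n)}\le C\|u\|_{(H^{-1/2})^N}$.

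Summing over $j$ then gives the inequality at fixed $t$. For the uniformity in $t\in[T_1,T_2]$, the point to watch is that the constant in Vaillancourt's theorem depends only on finitely many of the symbol seminorms of $\mathcal{B}_{t,j}$ (and on the diameter of its support); since $t\mapsto A_t$ is continuous into $\Psi^0_{phg}(\mathcal{M};\C^{N\times N})$, these quantities, as well as the norms of the various order $-1$ and smoothing remainders, depend continuously on $t$ and are therefore bounded on the compact interval $[T_1,T_2]$, so taking the supremum over $t$ of all the constants produced above yields a single $C$ valid for every $t$. I expect the only genuinely delicate point to be precisely this bookkeeping — keeping track of which symbol seminorms govern the G{\aa}rding constant so as to guarantee their uniform boundedness in $t$ — while the partition of unity, the passage to a chart, and the preservation of Hermitian nonnegativity under the change of charts are routine.
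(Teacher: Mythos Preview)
Your proposal is correct and follows essentially the same route the paper indicates: reduce to the Euclidean matrix-valued sharp G{\aa}rding inequality of \cite{Vaillancourt70} via a partition of unity in local charts (as in \cite[Section A.4]{LaurentLeautaud16}), and then obtain the uniformity in $t$ from compactness of $[T_1,T_2]$ and the continuity of $t\mapsto A_t$ in the symbol topology. The paper merely organizes this as first proving the fixed-$t$ manifold version (Theorem~\ref{th: sharp garding manifold}) and then declaring the uniform version a direct consequence, which is exactly what your final paragraph spells out.
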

The proof of Theorem \ref{cor:unif-bound} is a direct consequence of the following theorem.
\begin{thm}
\label{th: sharp garding manifold}
For $A\in \Psi^0_{phg}(\mathcal{M}; \C^{N\times N})$, if $\sigma_0(A)$ is a nonnegative hermitian matrix of order 0 on $T^*\mathcal{M}$,  then there exist $C>0$ such that
\begin{align}
\label{e:garding-unif}
 (A u , u)_{(L^2)^N} \geq - C \|u\|_{(H^{-1/2})^N}^2,  \quad \forall u\in (L^2)^N.
\end{align}
%then for all
%~$s \in \R$, there exists ~ $\gamma$ a seminorm on ~ $S^0_{phg}(T^* \R^n; C^{N\times N})$, there exist ~$\epsilon>0$, $\ell >0$ and ~$C>0$~ such that, for all   ~$A \in \Psi^0_{phg}(M; \R^{N\times N})$,
%~$K_{a_ij}$~ is the kernel of
%~$a_{ij}$~.
% where~$a^{j} \in S^0_{phg}(T^* \R^n; \C^{N\times N})$ is the symbol of ~$(\kappa_j^{-1})^* \psi_j A \tilde{\psi}_j \kappa_j^* \in \Psi^0_{phg}(\R^n; \C^{N\times N})$.\\
%  ~$A=(a_{ik})_{ij},i,k=1,\cdots,N$~.~ $M_{\epsilon^{j}}=\{(x,y)\in M\times M, dist(x,y)>\epsilon^{j}\},$ ~~$dist(\cdot,\cdot)$~is Riemannian distance on Riemannian manifold~$M$~~
\end{thm}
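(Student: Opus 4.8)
The plan is to reduce the statement to the corresponding inequality on $\R^n$, which for matrix-valued symbols is the classical sharp G{\aa}rding (Lax--Nirenberg / Vaillancourt) inequality \cite{Vaillancourt70}, and to carry out the reduction by localising in a finite atlas.

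First I would fix a finite atlas $(\mathcal{O}_\alpha,\kappa_\alpha)_\alpha$ of $\mathcal{M}$ together with a subordinate partition of unity of the form $\sum_\alpha \phi_\alpha^2=1$, $\phi_\alpha\in\mathcal{C}^\infty_c(\mathcal{O}_\alpha)$. Since $\phi_\alpha$ is scalar and real, $\phi_\alpha A\phi_\alpha=\phi_\alpha^2 A+\phi_\alpha[A,\phi_\alpha]$, and $[A,\phi_\alpha]\in\Psi^{-1}_{phg}(\mathcal{M};\C^{N\times N})$ because the commutator of an order-$0$ operator with multiplication by a smooth function drops the order by one (the order-$0$ part of the commutator symbol vanishes, $\phi_\alpha$ being scalar). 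Summing over $\alpha$ gives
\[
\sum_\alpha \phi_\alpha A\phi_\alpha = A+\mathcal{R},\qquad \mathcal{R}:=\sum_\alpha \phi_\alpha[A,\phi_\alpha]\in\Psi^{-1}_{phg}(\mathcal{M};\C^{N\times N}),
\]
and an operator in $\Psi^{-1}_{phg}$ maps $(H^{-1/2})^N$ to $(H^{1/2})^N$, so $|(\mathcal{R}u,u)_{(L^2)^N}|\le C\|u\|_{(H^{-1/2})^N}^2$. Hence it suffices to bound $\sum_\alpha (A\phi_\alpha u,\phi_\alpha u)_{(L^2)^N}$ from below by $-C\|u\|_{(H^{-1/2})^N}^2$.

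Next I treat each summand locally. The operator $\phi_\alpha A\phi_\alpha$ is compactly supported in $\mathcal{O}_\alpha\times\mathcal{O}_\alpha$, so its conjugate by $\kappa_\alpha$ is a compactly supported $\widetilde A_\alpha\in\Psi^0_{phg}(\R^n;\C^{N\times N})$, and $(A\phi_\alpha u,\phi_\alpha u)_{(L^2)^N}=(\widetilde A_\alpha v_\alpha,v_\alpha)_{L^2(\R^n)^N}$ where $v_\alpha$ is the push-forward of $\phi_\alpha u$. Its principal symbol equals, up to the fibrewise linear change of the cotangent variable induced by $d\kappa_\alpha$, the matrix $(\phi_\alpha\circ\kappa_\alpha^{-1})^2\,\sigma_0(A)$; since $\sigma_0(A)$ is a nonnegative Hermitian matrix at each point and both multiplication by the nonnegative scalar $(\phi_\alpha\circ\kappa_\alpha^{-1})^2$ and a linear change of variable in $\eta$ preserve Hermitian nonnegativity pointwise, $\sigma_0(\widetilde A_\alpha)\ge 0$. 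I then apply the $N\times N$ sharp G{\aa}rding inequality on $\R^n$ to get $(\widetilde A_\alpha v,v)_{L^2(\R^n)^N}\ge -C_\alpha\|v\|_{H^{-1/2}(\R^n)^N}^2$, sum over the finitely many $\alpha$, and use that multiplication by $\phi_\alpha$ and push-forward by $\kappa_\alpha$ act boundedly on $H^{-1/2}$, so that $\sum_\alpha\|v_\alpha\|_{H^{-1/2}(\R^n)^N}^2\le C\|u\|_{(H^{-1/2})^N}^2$. This gives \eqref{e:garding-unif} with a constant depending only on the atlas, the partition of unity and $\sigma_0(A)$.

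The only genuinely nontrivial ingredient is the Euclidean matrix sharp G{\aa}rding inequality, whose proof (Friedrichs symmetrisation / the Lax--Nirenberg argument) is identical for scalars and systems; everything else is routine. The one point deserving a little care, and the main obstacle to writing the argument cleanly, is the verification that cutting off by $\phi_\alpha^2$ and transporting by $\kappa_\alpha$ keep the principal symbol a \emph{nonnegative Hermitian} matrix pointwise, so that the Euclidean theorem genuinely applies to $\widetilde A_\alpha$ (together with the pseudolocality fact that $[A,\phi_\alpha]$ has order $-1$). I expect no further difficulty, and the uniform-in-$t$ statement of Theorem \ref{cor:unif-bound} follows by running the same argument while tracking the dependence of the constants on $\sigma_0(A_t)$, which is continuous in $t$ on the compact interval $[T_1,T_2]$.
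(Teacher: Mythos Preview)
Your proposal is correct and follows exactly the route indicated in the paper: the authors do not give a proof but state that the inequality on the manifold is obtained from the Euclidean $N\times N$ sharp G{\aa}rding inequality \cite{Vaillancourt70} by localisation in a finite atlas, referring to \cite[Section~A.4]{LaurentLeautaud16} for the (identical) scalar argument. Your partition-of-unity reduction, the commutator estimate $[A,\phi_\alpha]\in\Psi^{-1}_{phg}$, and the transport to charts are precisely that argument spelled out.
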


\section{Proof of main results}\lab{sectionproof}

In this section, we give a proof of Theorem \ref{mainthm-w}. Our proof is inspired from \cite{LaurentLeautaud16}. We start by writing the System \eqref{mainwave1.1} as a $2N\times 2N$ system of order $1$. Then, we use a trick due to Taylor to eliminate the lower order terms. Applying System Egorov Theorem, $N\times N$ G{\aa}rding inequality and control theory of ODE, we construct a connection between pseudodifferential representation and Gramian matrix of ODE System \eqref{mainmainode}.

In the following, we will be slightly more general than in Theorem \ref{mainthm-w}, in the sense that we will allow $L$ and $D$ to be pseudodifferential operators in space and not only differential operators. We assume
\be
    \lab{L}
            L=   A_0\pa_t + A_1,
\ee
with
 $A_0\in \mathcal{C}^\infty(\R;\Psi^0(\mathcal{M};\C^{N\times N}))$,
$ A_1\in \mathcal{C}^\infty(\R;\Psi^1(\mathcal{M};\C^{N\times N})).
$
and
\be
    \lab{D}
            D=   D_0\pa_t + D_1,
 \ee
 with
$D_0\in \mathcal{C}^\infty(\R;\Psi^0(\mathcal{M};\C^{K\times N}))$,
$ D_1\in \mathcal{C}^\infty(\R;\Psi^1(\mathcal{M};\C^{K\times N})).$

%In section \ref{subsection4.1.1},
%%???????????Gramian  ????????????????????2???????????y??D???????????2?????11?2???????o?3??? ?|??????????3??o??????????3???????
%%\eqref{mainmainode}????????????1??D?.
%we give the link among that the principal symbol is positive controllability of system \eqref{mainmainode} and weak observability inequality of system.
%%??2????2\ref{subsection1.2.2}??2?D, ????????3??????a\ref{mainthm-w}???? ?????a\ref{maincorfc}???? %?a???\ref{chapter5corws}?????a???\ref{theoremoptimalcons}???? ?a???\ref{chapter5cortime}?????????.
%In subsection \ref{subsection1.2.2}, we prove theorem \ref{mainthm-w}, theorem \ref{maincorfc}, cor\ref{chapter5corws}, cor\ref{theoremoptimalcons}, %cor\ref{chapter5cortime}.
\subsection{Gramian operator}\lab{subsectGramian}

\noindent{\bf $\bullet $ Half wave decomposition}

We rewrite System \eqref{mainwave1.1}
to  Klein-Gordon type equations \cite{DehmanLebeau09,LaurentLeautaud16},
\be
\lab{obs-abstract}
    \begin{cases}
    (\pa_t^2 -\Delta_g)V+V+B_0\pa_tV+B_1V=0,\\
    (V(0),\partial_tV(0))=(V_0,V_1).
    \end{cases}
\ee
where
~$B_0=A_0,B_1=A_1-Id_{N\times N}$.

We set
\be\lab{halfwaveoperator}
    V_+=(\pa_t+i\Lambda)V,\quad V_-=(\pa_t-i\Lambda)V
\ee
so that
\be\lab{4.3.3}
    V_0=\f{\Lambda^{-1}}{2i}(V_+(0)-V_-(0)),V_1=\f{1}{2}(V_+(0)+V_-(0)).
\ee
 We define the map ~$\Sigma$:
\be
    \lab{half-wave}
    \begin{split}
        \Sigma: (H^s)^N\times (H^{s-1})^N &\rightarrow (H^{s-1})^{2N} ,\\
        (V_0,V_1)&\mapsto (V_+(0),V_-(0)).
    \end{split}
\ee
According to \eqref{4.3.3}, we have:
\be
    \Sigma=\left(
                     \begin{array}{cc}
                       i\Lambda Id_{N\times N} & Id_{N\times N} \\
                       -i\Lambda Id_{N\times N} & Id_{N\times N} \\
                     \end{array}
            \right)
,\quad
    \Sigma^{-1}=\f{1}{2}\left(
                     \begin{array}{cc}
                       -i\Lambda^{-1} Id_{N\times N} & i\Lambda^{-1} Id_{N\times N} \\
                        Id_{N\times N} & Id_{N\times N} \\
                     \end{array}
                    \right),
\ee
where the operator ~$\Sigma$~ is (almost) an isometry
from $ (H^s)^N\times (H^{s-1})^N $ to $ (H^{s-1})^{2N}$.

 Note that for $(V_+(0),V_-(0))=\Sigma(V_0,V_1)$, we have
 \be\lab{halfwaveeq}
 2   \|(V_0,V_1)\|^2_{(H^s)^N\times (H^{s-1})^N }=\|(V_+(0),V_-(0))\|^2_{(H^{s-1})^{2N}}.
 \ee
Let
\be\lab{B}
    B_+=\f{1}{2}(B_0-iB_1\Lambda^{-1}),\quad B_-=\f{1}{2}(B_0+iB_1\Lambda^{-1}),
\ee
we rewrite System \eqref{obs-abstract} as a $2N\times 2N$ system
\be
\lab{chapter5wave1.4}
    \begin{cases}
        (\pa_t-i\Lambda)V_++B_+V_++B_-V_-=0,\\
        (\pa_t+i\Lambda)V_-+B_+V_++B_-V_-=0,
    \end{cases}
\ee
since
$ \pa_t^2 -\Delta_g+1=(\pa_t-i\Lambda)(\pa_t+i\Lambda)$.
%??|?
Denote
\be\lab{chapter five M}%\displaystyle
\begin{split}
P=\pa_t+M_1+B,\quad
                  M_1=\left(
                   \begin{array}{cc}
                     -i\Lambda Id_{N\times N} & 0 \\
                     0 & i\Lambda Id_{N\times N} \\
                   \end{array}
                 \right),\quad B=\left(
     \begin{array}{cc}
       B_+ & B_- \\
       B_+ & B_- \\
     \end{array}
   \right).
\end{split}
\ee
Then ~$P\mathcal{V}=0, ~\mathcal{V}=(V_+,V_-)^{tr}$.
We define~$\mathfrak{S}(t,s)$ as the solution operator of System \eqref{chapter5wave1.4}.
%??????D???3?????2D?a???3????????a?????o?o??D?,  ???????????a3\eqref{chapter5wave1.4}?????a????????a~$\mathfrak{S}(t,s)$.
The well-posedness of Hyperbolic System \eqref{chapter5wave1.4} yields
$\mathfrak{S}(t,s)\in \mathcal{B}((0,T)^2;\mathcal{L}(H^\sigma(\mathcal{M};\C^{2N}))),$
\ $\pa_t \mathfrak{S}(t,s),\pa_s \mathfrak{S}(t,s)\in  \mathcal{B}((0,T)^2;\mathcal{L}(H^\sigma(\mathcal{M};\C^{2N});H^{\sigma-1}(\mathcal{M};\C^{2N})))$ for all  $\sigma \in \R.$

%??3????y???a\ref{chapter5decomlem}
%???????a3\eqref{chapter5wave1.4}???23??????????????????o~$\mathfrak{S}(t,s)$~ ??DD????a.
\begin{lem}
\lab{chapter5decomlem}
Denote by ~$S_\pm(t,s)$~solution operator of
~$(\pa_t\mp i\Lambda)+B_\pm $ and
let   ~\bnan
\label{defScal}
\mathcal{S}(t,s)= \left(
                      \begin{array}{cc}
                        S_+(t,s) & 0 \\
                        0 & S_-(t,s) \\
                      \end{array}
                    \right).
										\enan
The solution operator ~$\mathfrak{S}(t,s)$~ of System \eqref{chapter5wave1.4} have the following decomposition
\be
\mathfrak{S}(t,s)=\mathcal{S}(t,s)+ \mathcal{R}(t,s),
\ee
where, for all $\sigma\in \R$,
 \be\lab{RRRR}
 \mathcal{R}(t,s)\in  \mathcal{B}((0,T)^2;\mathcal{L}(H^\sigma(\mathcal{M};\C^{2N});H^{\sigma+1}(\mathcal{M};\C^{2N})))\ee
\end{lem}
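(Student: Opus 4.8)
The strategy is to compare the full solution operator $\mathfrak{S}(t,s)$ of \eqref{chapter5wave1.4} with the block-diagonal operator $\mathcal{S}(t,s)$ and show that the off-diagonal coupling term in $B$, which involves $B_\pm$ together with a nonzero "cross" contribution, is the only source of interaction, and that this interaction is smoothing by one derivative because the two half-wave propagators $S_+$ and $S_-$ oscillate at different frequencies (their generators differ by $2i\Lambda$, an elliptic operator of order one). Concretely, write $P = \partial_t + M_1 + B$ and split $B = B_{\mathrm{diag}} + B_{\mathrm{off}}$, where $B_{\mathrm{diag}} = \mathrm{diag}(B_+, B_-)$ generates $\mathcal{S}(t,s)$ together with $M_1$, and $B_{\mathrm{off}} = \left(\begin{smallmatrix} 0 & B_- \\ B_+ & 0 \end{smallmatrix}\right)$. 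By Duhamel's formula applied to the operator equation $\partial_t \mathfrak{S} = -(M_1 + B_{\mathrm{diag}})\mathfrak{S} - B_{\mathrm{off}}\mathfrak{S}$ with the base operator generating $\mathcal{S}$, we get
\be
\mathfrak{S}(t,s) = \mathcal{S}(t,s) - \int_s^t \mathcal{S}(t,\tau) B_{\mathrm{off}}(\tau) \mathfrak{S}(\tau,s)\, d\tau .
\ee
So $\mathcal{R}(t,s) := \mathfrak{S}(t,s) - \mathcal{S}(t,s)$ is given by that integral, and we must show it gains one derivative.

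The key point is that $\mathcal{S}(t,\tau) B_{\mathrm{off}}(\tau)$ is not smoothing by itself ($B_{\mathrm{off}}$ is order $0$), so a naive Duhamel estimate only gives $\mathcal{R}$ bounded on $H^\sigma$. To extract the gain, I would iterate once more: substitute $\mathfrak{S}(\tau,s) = \mathcal{S}(\tau,s) + \mathcal{R}(\tau,s)$ inside the integral, obtaining
\be
\mathcal{R}(t,s) = -\int_s^t \mathcal{S}(t,\tau) B_{\mathrm{off}}(\tau) \mathcal{S}(\tau,s)\, d\tau \;-\; \int_s^t \mathcal{S}(t,\tau) B_{\mathrm{off}}(\tau) \mathcal{R}(\tau,s)\, d\tau .
\ee
The second term is handled by a Gronwall/fixed-point argument in $\mathcal{B}((0,T)^2; \mathcal{L}(H^\sigma, H^{\sigma+1}))$ once the first term is shown to lie in that space. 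For the first term, the off-diagonal structure forces terms of the form $S_+(t,\tau) B_\mp(\tau) S_-(\tau,s)$ (and symmetrically), i.e. a composition of a "$+$"-propagator and a "$-$"-propagator. Microlocally, $S_\pm(\tau,s)$ is (by the scalar Egorov theorem, Theorem \ref{system egorov} with $c = \mp 1$) a pseudodifferential operator transporting along the flow $\varphi_{\pm(\tau - s)}$ modulo a one-smoothing remainder; the composition $S_+(t,\tau) B_\mp(\tau) S_-(\tau,s)$ has wavefront-set relation that is empty at the level of principal symbols because the two flows move in opposite codirections, so the leading term vanishes and what remains is one order smoother. Equivalently, and more elementarily, one can integrate by parts in $\tau$ using $(\partial_\tau - 2i\Lambda)$-type identities: since $\partial_\tau \big( S_+(t,\tau) S_-(\tau,s)\big)$ produces the elliptic factor $2i\Lambda$ acting between the propagators, writing $\mathrm{Id} = (2i\Lambda)^{-1}(2i\Lambda)$ and integrating by parts trades the time integral for a factor $\Lambda^{-1}$, which is exactly the one-derivative gain, with boundary terms at $\tau = s$ and $\tau = t$ that also carry a $\Lambda^{-1}$. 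I would carry this out via the conjugated operator $S_+(\tau,s)^{-1} S_-(\tau,s)$, whose time derivative is smoothing-times-elliptic, mirroring the computation in \cite{LaurentLeautaud16}.

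The regularity of $\partial_t \mathcal{R}$ and $\partial_s \mathcal{R}$ mapping $H^\sigma \to H^\sigma$ (a loss of one derivative relative to $\mathcal{R}$) follows by differentiating the Duhamel identity: $\partial_t \mathcal{R}$ picks up a boundary term $B_{\mathrm{off}}(t)\mathfrak{S}(t,s)$ which is order $0$, i.e. bounded on $H^\sigma$, plus $-\int_s^t \partial_t \mathcal{S}(t,\tau) B_{\mathrm{off}}(\tau)\mathfrak{S}(\tau,s)\,d\tau$ which by the established gain and the mapping properties of $\partial_t \mathcal{S}$ lands in $\mathcal{L}(H^\sigma, H^\sigma)$; similarly for $\partial_s$. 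The main obstacle is the first integral term: showing rigorously that $\int_s^t S_+(t,\tau) B_\mp(\tau) S_-(\tau,s)\,d\tau$ gains one derivative uniformly in $(t,s) \in (0,T)^2$ and in $\sigma$. This is where the non-resonance between the two half-wave flows — encoded in the ellipticity of $M_1^{+} - M_1^{-} = -2i\Lambda$ — must be used decisively, either through the Egorov calculus of Theorem \ref{system egorov} and the composition calculus for pseudodifferential operators with disjoint essential supports of symbols, or through the integration-by-parts identity above; I would present the latter as it is self-contained and avoids reproving a two-sided Egorov statement.
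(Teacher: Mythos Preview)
Your approach is correct but takes a genuinely different route from the paper. The paper does \emph{not} use Duhamel against $\mathcal{S}$ and then extract the gain from the oscillatory $+/-$ cross-terms; instead it employs Taylor's decoupling trick: it introduces an explicit off-diagonal operator
\[
K=\frac{1}{2i}\begin{pmatrix}0 & \Lambda^{-1}B_-\\ -B_+\Lambda^{-1} & 0\end{pmatrix}\in \mathcal{C}^\infty(0,T;\Psi_{phg}^{-1}(\mathcal{M};\C^{2N\times 2N})),
\]
checks that $B_{\mathrm{off}}+[M_1,K]\in\mathfrak{R}^{-1}$, and sets $W=(Id-K)\mathcal{V}$, which then solves the \emph{diagonal} equation $P_dW=R\mathcal{V}$ with $R\in\mathfrak{R}^{-1}$. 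A single Duhamel step for $P_d$ then gives $\mathcal{R}(t,s)=K(t)\mathfrak{S}(t,s)-\mathcal{S}(t,s)K(s)+\int_s^t\mathcal{S}(t,t')R(t')\mathfrak{S}(t',s)\,dt'$, and the one-derivative gain is immediate from $K\in\Psi^{-1}$ and $R\in\mathfrak{R}^{-1}$, with no oscillatory-integral argument needed. By contrast, your argument pushes the work into the integration-by-parts lemma showing $\int_s^t S_+(t,\tau)B_\mp(\tau)S_-(\tau,s)\,d\tau$ gains a derivative via the $2i\Lambda$ gap. This is perfectly valid and is essentially the mechanism behind the paper's Lemma~\ref{lemma: Regularity} (used later for the anti-diagonal Gramian terms), so you are effectively front-loading that lemma. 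The paper's route is more algebraic and self-contained at this stage; yours is more analytic but economizes by reusing machinery that will be needed anyway. One minor correction: $S_\pm(\tau,s)$ is a Fourier integral operator, not a pseudodifferential operator, so the Egorov-based heuristic you sketch first is not literally correct --- but you rightly abandon it for the integration-by-parts argument, which is the sound path.
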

\bnp For the case of  $N=1$, we refer to \cite{LaurentLeautaud16}.

 We use a trick to decouple the equations. More precisely, we find an operator ~$K\in \mathcal{C}^\infty(0,T;\Psi^{-1}(M;\C^{2N\times 2N}))$ so that $W=(Id_{2N\times 2N}-K)\mathcal{V}$ solves a diagonal system, up to appropriate remainders. We have on the one hand
\be\nonumber
    (Id_{2N\times 2N}+K)W=\mathcal{V}-K^2\mathcal{V}.
\ee
Notice that ~$P \mathcal{V} = 0$, then
\be\nonumber\begin{split}
    (Id_{2N\times 2N}-K)P(Id_{2N\times 2N}+K)W&=(Id_{2N\times 2N}-K)P(\mathcal{V}-K^2\mathcal{V}) \\
    &=-(Id_{2N\times 2N}-K)PK^2\mathcal{V}=R\mathcal{V}.
\end{split}
\ee
Moreover, the remainder satisfies
$ R \in \mathfrak{R}^{-1}$, where
  $$
  \mathfrak{R}^{-1} =  \mathcal{C}^\infty(0,T ; \Psi_{phg}^{-1}(M;\C^{2N\times2N})) + \mathcal{C}^\infty(0,T ; \Psi_{phg}^{-2}(\mathcal{M};\C^{2N\times 2N})) \partial_t
 $$
%?a ??2???????y???a\ref{chapter5decomlem}?D????~.
is the admissible class of remainders in the present context.
On the other hand, we have
\be
\label{e:decoupling2}
(Id_{2N\times 2N} - K)P(Id_{2N\times2N} + K)W = PW + [P,K]W - KPK W ,
\ee
 with ~$KPK \in  \mathfrak{R}^{-1}$. We then remark that  ~$[{\pa_t} , K] W = ({\pa_t} K) W$ so that  ~$$[\pa_t , K] \in  \mathcal{C}^\infty(0,  {\color{blue}T} ; \Psi_{phg}^{-1}(\mathcal{M};\C^{2N\times2N})) \subset \mathfrak{R}^{-1}$$  and as well~$[B, K] \in \mathfrak{R}^{-1}$. Hence, if we find ~$K$~ such that
\be\displaystyle
\label{e:decouplingK}
 \left(
\begin{array}{cc}
0 & B_-\\
B_+ & 0
\end{array}
\right)  +
[M_1 , K]\in \mathfrak{R}^{-1} ,
\ee
then~ $W$~ solves the following equation
\be
\label{e:eq-W}
P_d W= R_1 W + R_2 \mathcal{V} =R\mathcal{V} ,
\ee
with ~$R_1 ,R_2 , R \in \mathfrak{R}^{-1}$ and, with $M_1$ defined in \eqref{chapter five M},
\be
P_d = \pa_t + M_1 + A_d
, \qquad A_d =
\left(
\begin{array}{cc}
B_+ & 0\\
0     & B_-
\end{array}
\right) .
\ee
Now taking
\be
K := \frac{1}{2i} \left(
\begin{array}{cc}
0 &   \Lambda^{-1}B_-\\
  -B_+ \Lambda^{-1}    & 0
\end{array}
\right)
\in
\mathcal{C}^\infty(0,T_0 ; \Psi_{phg}^{-1}(\mathcal{M};\C^{2N\times2N}))
\ee
realizes~\eqref{e:decouplingK}, and we are left to study $P_d W= R\mathcal{V}$, $R \in \mathfrak{R}^{-1}$, with ~$W=(Id_{2N\times2N} - K)\mathcal{V}$. Note that it is crucial at this step that $M_1$ is diagonal so that, for instance,   $\Lambda^{-1}B_{-}\Lambda -B_{-} \in \mathfrak{R}^{-1}$.  $\mathcal{S}(t,s)$ defined in~\eqref{defScal} is therefore the solution operator of $P_d$. Equation \eqref{e:eq-W} is now solved by
\be
W(t) = \mathcal{S}(t,s)W(s) + \int_s^t \mathcal{S}(t,t') R(t') \mathcal{V}(t') dt' , \qquad R \in \mathfrak{R}^{-1} .
\ee
 Recalling that $W=(Id_{2N\times2N} - K)\mathcal{V}$, and that~ $\mathcal{V}(t) = \mathfrak{S}(t,s)\mathcal{V}(s)$, this yields
$$
\mathcal{V}(t) = \mathcal{S}(t,s)\mathcal{V}(s) + K(t)\mathfrak{S}(t,s)\mathcal{V}(s) - \mathcal{S}(t,s)K(s)\mathcal{V}(s)+\Big( \int_s^t \mathcal{S}(t,t') R(t') \mathfrak{S}(t',s) dt' \Big) \mathcal{V}(s).
$$
This can be rewritten as
$$
\mathcal{V}(t) = \mathcal{S}(t,s)\mathcal{V}(s) + \mathcal{R}(t,s)\mathcal{V}(s),
$$
with
$$
\mathcal{R}(t,s) = K(t)\mathfrak{S}(t,s)  - \mathcal{S}(t,s)K(s) +\Big( \int_s^t \mathcal{S}(t,t') R(t') \mathfrak{S}(t',s) dt' \Big). $$
satisfying
$\mathcal{R}(t,s)  \in \mathcal{B}( (0,T_0)^2 ;\mathcal{L}(H^\sigma (\mathcal{M}; \C^{2N}); H^{\sigma+1} (\mathcal{M}; \C^{2N}) )) $ and
$\pa_t \mathcal{R}(t,s), \pa_s \mathcal{R}(t,s)  \in  \mathcal{B}( (0,T_0)^2;\mathcal{L}(H^\sigma (\mathcal{M}; \C^{2N}) ))$
for any $\sigma \in \R$, according to regularity of $\mathfrak{S}(t,s) $ and $K(s)$.
 This finishes the proof of Lemma \ref{chapter5decomlem}.
 \enp
 %\begin{rem}
%?????????a???? ~$\sigma_2(M_1)=|\xi|^2_g Id_{N\times N}$~?a?????????, ???2?o??????D??o??????????????1?????????? ?o????a???????????????1??????????.
%\end{rem}
%\begin{rem}

Lemma \ref{chapter5decomlem} states that ~$\mathfrak{S}(t,s)$~can be divided into two parts, diagonal term $\mathcal{S}(t,s)$ and a more regular term ~$R(t,s)$. So we have a high-frequency representation formula for solutions of System \eqref{mainwave1.1}.\\

%\end{rem}
%

\noindent{\bf $\bullet$~  Gramian Operator}

In this part, we apply System Egorov Theorem \ref{system egorov} to express the Gramian operator as  a pseudodifferential operator, following \cite{DehmanLebeau09} for the scalar case.
\begin{thm}%[The HUM operator is a pseudodifferential operator]
\label{thmHUM}
Let  $V_*:=(V_0,V_1)\in (H^1)^N\times (L^2)^N$ be the  initial data of System \eqref{mainwave1.1}. Let $T_0>0$.  Then for any $T\in (0,T_0]$, we have
\be
    \lab{obs-2.0}
    \int_0^T\|DV(t)\|_{(L^2)^K}^2dt=((G_T+\tilde{\mathcal{R}}_T)\Sigma V_*,\Sigma V_*)_{(L^2)^N\times (L^2)^N },
\ee
 where ~$G_T\in \mathcal{C}^\infty(0,T_0;\Psi_{phg}^0(\mathcal{M};\C^{2N\times 2N}))$ and  $ \tilde{\mathcal{R}}_{T}\in \mathcal{B}(0,T_0;\mathcal{L}((H^{\sigma})^{2N},(H^{\sigma+1})^{2N}))$ is in a class of regularizing operators of order at least one.
%\be
%   \mathcal{R}_T\in \mathcal{B}_{loc}(R^+;\mathcal{L}(H^\sigma(\mathcal{M};\C^{2N});H^{\sigma+1}(\mathcal{M};\C^{2N}))), \quad \forall \sigma\in \R.
%\ee
Moreover,  the  principal symbol of $G_{T}$ can be characterized as follows:
\be\displaystyle
\lab{mainGri}
 \begin{split}
    \sigma_0(G_T)&=\left(
                    \begin{array}{cc}
                  G_{\rho}^+(T)  & 0 \\
                      0 & G_{\rho}^-(T)   \\
                    \end{array}
                  \right)\in \mathcal{C}^\infty(0,T_0;S_{phg}^0(\mathcal{M};\C^{2N\times 2N})),\\
     G_{\rho}^+(T)&=\f{1}{4}\int_0^TR^{*}_+(0,t;\varphi_{-t}(\rho))d_+^{*}(t,\varphi_{-t}(\rho))d_+(t,\varphi_{-t}(\rho))R_+(0,t;\varphi_{-t}(\rho)) dt, \\
     G_{\rho}^-(T)&= \f{1}{4}\int_0^TR^{*}_-(0,t;\varphi_{t}(\rho))d_-^{*}(t,\varphi_{t}(\rho))d_-(t,\varphi_{t}(\rho))R_-(0,t;\varphi_{t}(\rho))dt,
 \end{split}
 \ee
 where $R_\pm(\tau,t;\rho)$ satisfies
\be%\displaystyle
\lab{main-r}
\begin{split}
\f{d R_\pm(\tau,t;\rho)}{d \tau}=R_\pm(\tau,t;\rho)b_\pm(\tau,\varphi_{\pm(t-\tau)}(\rho)), \quad
R_\pm(t,t;\rho)=Id_{N\times N},
\end{split}
\ee
with $b_\pm=\sigma_0(B_\pm)=\f{1}{2} (a_0\pm \f{a_1}{i|\xi|_x})$, $d_\pm= d_0\pm\f{d_1}{i|\xi|_x}$ and $\varphi_t(\rho)$ is the Hamiltonnian flow of $|\xi|_{x}$ initiated at $\rho$ (see Theorem \ref{system egorov} for more precisions).
\end{thm}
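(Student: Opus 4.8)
The plan is to carry the observation functional $\int_0^T\|DV(t)\|_{(L^2)^K}^2\,dt$ through the half-wave decomposition, replace the full $2N\times2N$ propagator $\mathfrak S$ by its decoupled diagonal part from Lemma \ref{chapter5decomlem}, and then read the principal symbol of the resulting operator off System Egorov Theorem \ref{system egorov}. Since the inversion formulas \eqref{halfwaveoperator}--\eqref{4.3.3} hold at every time, $D=D_0\pa_t+D_1$ acts on solutions as $DV(t)=\widetilde D_t\mathcal V(t)$, where $\mathcal V(t)=(V_+(t),V_-(t))^{tr}=\mathfrak S(t,0)\Sigma V_*$ and $\widetilde D_t=(D_+(t)\ \ D_-(t))$ with $D_\pm=\f12(D_0\mp\i D_1\Lambda^{-1})\in\mathcal C^\infty(\R;\Psi^0(\mathcal M;\C^{K\times N}))$, $\sigma_0(D_\pm)=\f12 d_\pm$. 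Therefore $\int_0^T\|DV(t)\|_{(L^2)^K}^2\,dt=(\mathcal G_T\Sigma V_*,\Sigma V_*)_{(L^2)^{2N}}$ with $\mathcal G_T:=\int_0^T\mathfrak S(t,0)^*(\widetilde D_t^*\widetilde D_t)\mathfrak S(t,0)\,dt$, and $\widetilde D_t^*\widetilde D_t$ is a smooth family of nonnegative order-$0$ matrix operators with principal symbol the $2\times2$ block matrix whose $(\varepsilon,\varepsilon')$ block is $\f14 d_\varepsilon^* d_{\varepsilon'}$, $\varepsilon,\varepsilon'\in\{+,-\}$. So it suffices to decompose $\mathcal G_T$ as $G_T+\widetilde{\mathcal R}_T$ with the asserted properties.

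By Lemma \ref{chapter5decomlem}, $\mathfrak S(t,0)=\mathcal S(t,0)+\mathcal R(t,0)$ with $\mathcal S(t,0)=\mathrm{diag}(S_+(t,0),S_-(t,0))$ as in \eqref{defScal} and $\mathcal R(t,0)$ gaining one derivative. Inserting this in $\mathcal G_T$ and using that $\widetilde D_t^*\widetilde D_t$ has order $0$ while $\mathcal S,\mathfrak S$ are bounded on each $(H^\sigma)^{2N}$, every term containing a factor $\mathcal R$ is, after integration in $t$, in $\mathcal B(0,T_0;\mathcal L((H^\sigma)^{2N},(H^{\sigma+1})^{2N}))$, hence goes into $\widetilde{\mathcal R}_T$. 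What remains is the $2\times2$ block operator $\int_0^T\mathcal S(t,0)^*(\widetilde D_t^*\widetilde D_t)\mathcal S(t,0)\,dt$, whose diagonal blocks are $\int_0^T S_\pm(t,0)^*(D_\pm^*D_\pm)(t)S_\pm(t,0)\,dt$ and whose off-diagonal blocks are $\int_0^T S_\mp(t,0)^*(D_\mp^*D_\pm)(t)S_\pm(t,0)\,dt$.

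For the diagonal blocks I would apply Theorem \ref{system egorov}: $S_+(t,0)$ (resp. $S_-(t,0)$) is the solution operator of \eqref{hyperbolic}--\eqref{ht} with $c=1$, $W_0=B_+$ (resp. $c=-1$, $W_0=B_-$), so with $P_m=(D_\pm^*D_\pm)(t)$ and $m=0$ one gets $S_\pm(t,0)^*(D_\pm^*D_\pm)(t)S_\pm(t,0)=Q^\pm(0,t)+R^\pm(0,t)$, where $Q^\pm(0,t)$ is a smooth family of order-$0$ matrix pseudodifferential operators and $R^\pm$ gains a derivative (so $\int_0^TR^\pm(0,t)\,dt$ joins $\widetilde{\mathcal R}_T$). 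Here the flow in \eqref{eqnHamiltonc} is $\chi^1_{t,0}=\varphi_{-t}$ for $c=1$ and $\chi^{-1}_{t,0}=\varphi_t$ for $c=-1$, the transport matrix $R_1$ of \eqref{formulaR} is $R_\pm$ of \eqref{main-r}, and $\sigma_0((D_\pm^*D_\pm)(t))=\f14 d_\pm^*d_\pm$; hence \eqref{q} gives $\sigma_0(Q^\pm(0,t))(\rho)=\f14 R_\pm^*(0,t;\varphi_{\mp t}(\rho))d_\pm^*(t,\varphi_{\mp t}(\rho))d_\pm(t,\varphi_{\mp t}(\rho))R_\pm(0,t;\varphi_{\mp t}(\rho))$. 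Integrating in $t$ shows $\int_0^TQ^\pm(0,t)\,dt$ is an order-$0$ pseudodifferential operator, smooth in $T$ (with $\pa_T$ equal to $Q^\pm(0,T)$), whose principal symbol is precisely $G_\rho^\pm(T)$ of \eqref{mainGri}.

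The main obstacle is to show the off-diagonal blocks are regularizing of order one, which is what forces $\sigma_0(G_T)$ to be block-diagonal. Writing $S_\pm(t,0)=e^{\pm\i t\Lambda}\widetilde S_\pm(t)$, the factor $\widetilde S_\pm(t)$ solves the zeroth-order evolution $\pa_t\widetilde S_\pm=-(e^{\mp\i t\Lambda}B_\pm e^{\pm\i t\Lambda})\widetilde S_\pm$ (using Theorem \ref{system egorov} with $W_0=0$, i.e. that conjugation by $e^{\pm\i t\Lambda}$ preserves order-$0$ operators), so $\widetilde S_\pm(t)$ and $\pa_t\widetilde S_\pm(t)$ are bounded on every $(H^\sigma)^{2N}$ uniformly on $[0,T_0]$. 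Consequently $S_-(t,0)^*(D_-^*D_+)(t)S_+(t,0)=e^{2\i t\Lambda}A(t)$ with $A(t)$ and $\pa_tA(t)$ uniformly bounded on every $(H^\sigma)^{2N}$; then one integration by parts in $t$ (writing $e^{2\i t\Lambda}=\f{1}{2\i}\Lambda^{-1}\pa_t e^{2\i t\Lambda}$, so that the two boundary terms at $t=0,T$ and the remaining bulk integral all carry a factor $\Lambda^{-1}$) shows this block maps $(H^\sigma)^{2N}$ into $(H^{\sigma+1})^{2N}$, continuously in $T\in(0,T_0]$; the conjugate off-diagonal block ($\sim e^{-2\i t\Lambda}$) is identical. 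Both join $\widetilde{\mathcal R}_T$. Taking $G_T:=\mathrm{diag}(\int_0^TQ^+(0,t)\,dt,\int_0^TQ^-(0,t)\,dt)$ and $\widetilde{\mathcal R}_T$ the sum of all the remainders above gives $\mathcal G_T=G_T+\widetilde{\mathcal R}_T$ with $G_T\in\mathcal C^\infty(0,T_0;\Psi^0_{phg}(\mathcal M;\C^{2N\times2N}))$, $\sigma_0(G_T)=\mathrm{diag}(G_\rho^+(T),G_\rho^-(T))$, $\widetilde{\mathcal R}_T\in\mathcal B(0,T_0;\mathcal L((H^\sigma)^{2N},(H^{\sigma+1})^{2N}))$, i.e. \eqref{obs-2.0}--\eqref{mainGri}. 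Apart from this off-diagonal estimate, the only remaining work is the routine bookkeeping that all remainders from Lemma \ref{chapter5decomlem} and Theorem \ref{system egorov} (and their $t$-derivatives) stay in the stated classes, uniformly for $T$ in compact subsets of $(0,T_0]$.
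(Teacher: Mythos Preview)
Your argument is correct and tracks the paper's proof closely: the paper also states that Theorem \ref{thmHUM} is the combination of Proposition \ref{prop1} (half-wave decomposition plus Lemma \ref{chapter5decomlem} to peel off $\mathcal R$) and Proposition \ref{propHUM2} (System Egorov on the diagonal blocks plus one-smoothing of the off-diagonal blocks). The only substantive difference is how the off-diagonal smoothing is obtained: the paper packages it as Lemma \ref{lemma: Regularity} (whose proof, deferred to the appendix and to \cite{LaurentLeautaud16}, is precisely the oscillatory-integral/integration-by-parts mechanism you sketch), whereas you redo this step by hand via $S_\pm(t,0)=e^{\pm\i t\Lambda}\widetilde S_\pm(t)$. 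One small point: your claim that $\pa_tA(t)$ is bounded on every $(H^\sigma)^N$ is correct but does not follow only from ``$\widetilde S_\pm$ and $\pa_t\widetilde S_\pm$ bounded'' --- when you differentiate $A(t)=e^{-2\i t\Lambda}S_-(t,0)^*(D_-^*D_+)(t)S_+(t,0)$, the order-$1$ pieces cancel into commutators $[\Lambda,S_-^*]$ and $[\Lambda,D_-^*D_+]$, and you need Lemma \ref{l:comm-evol-lambda} (or an equivalent) to know the first of these is order $0$. With that caveat, everything goes through as you wrote.
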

This theorem is the main step to prove Theorem \ref{thmHUMintro}. The only difference is the characterization of $G_{\rho}^{\pm}(T)$ as the Gramian matrix of appropriate control problems, which will be made in another Section. The proof is a direct combination of Proposition \ref{prop1} and \ref{propHUM2} below.
\begin{prop}
\lab{prop1}
Denote by $V_*=(V_0,V_1)\in (H^1)^N\times (L^2)^N$ the initial data of System \eqref{mainwave1.1}. We have
\be
    \lab{obsprop}
    \int_0^T\|DV(t)\|_{(L^2)^K}^2dt=((\mathcal{G}_T+\mathcal{R}_T)\Sigma V_*,\Sigma V_*)_{(L^2)^{2N} },
\ee
 where
\be\lab{prop-mathR_T}
    \mathcal{R}_T\in \mathcal{B}_{loc}(\R^+;\mathcal{L}(H^\sigma(\mathcal{M};\C^{2N});H^{\sigma+1}(\mathcal{M};\C^{2N}))), \quad \forall \sigma\in \R.
\ee
and
 \be%\displaystyle
 \lab{G_T}\begin{split}
  &  \mathcal{G}_T=\int_0^T\left(
                          \begin{array}{cc}
                            S(t,0)^*_+D^{11}S(t,0)_+ & S(t,0)^*_+D^{12}S(t,0)_- \\
                            S(t,0)^*_-D^{21}S(t,0)_+ & S(t,0)^*_-D^{22}S(t,0)_- \\
                          \end{array}
                        \right)dt,\\
&D^{11}= \f{D_0^*D_0}{4}+\f{\Lambda^{-1}D_1^*D_1\Lambda^{-1}}{4}-\f{\Lambda^{-1}D_1^*D_0}{4i}+\f{D_0^*D_1\Lambda^{-1}}{4i},\\
&D^{12}= \f{D_0^*D_0}{4}-\f{\Lambda^{-1}D_1^*D_1\Lambda^{-1}}{4}-\f{\Lambda^{-1}D_1^*D_0}{4i}-\f{D_0^*D_1\Lambda^{-1}}{4i},\\
&D^{21}= \f{D_0^*D_0}{4}-\f{\Lambda^{-1}D_1^*D_1\Lambda^{-1}}{4}+\f{\Lambda^{-1}D_1^*D_0}{4i}+\f{D_0^*D_1\Lambda^{-1}}{4i},\\
&D^{22}= \f{D_0^*D_0}{4}+\f{\Lambda^{-1}D_1^*D_1\Lambda^{-1}}{4}+\f{\Lambda^{-1}D_1^*D_0}{4i}-\f{D_0^*D_1\Lambda^{-1}}{4i},
\end{split}\ee
where the definition of $S_{\pm}(t,s)$ is given in Lemma \ref{chapter5decomlem},
\end{prop}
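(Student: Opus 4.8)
\textbf{Proof plan for Proposition \ref{prop1}.}
The plan is to rewrite the observed quantity $\int_0^T\|DV(t)\|_{(L^2)^K}^2\,dt$ entirely in terms of the half-wave data $\mathcal V(t)=(V_+(t),V_-(t))^{tr}=\Sigma(V(t),\partial_tV(t))$, and then to replace the full solution operator $\mathfrak S(t,0)$ by its diagonal part $\mathcal S(t,0)=\mathrm{diag}(S_+(t,0),S_-(t,0))$ up to a one-smoothing remainder by means of Lemma \ref{chapter5decomlem}. First I would express $V$ and $\partial_t V$ through $V_\pm$ using \eqref{4.3.3}: namely $V=\f{\Lambda^{-1}}{2i}(V_+-V_-)$, so that $DV=D_0\pa_t V + D_1 V = \f12 D_0(V_++V_-) + \f{1}{2i}D_1\Lambda^{-1}(V_+-V_-)$, where I have used $\pa_t V=\f12(V_++V_-)$. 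Writing this as $DV = \mathcal D \,\mathcal V$ with the row operator $\mathcal D = \big(\tfrac12 D_0 + \tfrac{1}{2i}D_1\Lambda^{-1},\ \tfrac12 D_0 - \tfrac{1}{2i}D_1\Lambda^{-1}\big)$, the integrand becomes $\|DV(t)\|^2_{(L^2)^K} = (\mathcal D^*\mathcal D\, \mathcal V(t),\mathcal V(t))_{(L^2)^{2N}}$. Expanding the $2\times2$ block operator $\mathcal D^*\mathcal D$ produces exactly the four blocks $D^{11},D^{12},D^{21},D^{22}$ in \eqref{G_T}: the $(1,1)$ block is $\big(\tfrac12 D_0+\tfrac1{2i}D_1\Lambda^{-1}\big)^*\big(\tfrac12 D_0+\tfrac1{2i}D_1\Lambda^{-1}\big) = \tfrac14 D_0^*D_0 + \tfrac14\Lambda^{-1}D_1^*D_1\Lambda^{-1} - \tfrac1{4i}\Lambda^{-1}D_1^*D_0 + \tfrac1{4i}D_0^*D_1\Lambda^{-1}$ (using $(\tfrac1{2i})^* = -\tfrac1{2i}$), and the other three are obtained identically with the appropriate signs. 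This is a routine bookkeeping computation that matches the displayed formulas.

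Next I would insert the representation $\mathcal V(t)=\mathfrak S(t,0)\Sigma V_*$ and, using Lemma \ref{chapter5decomlem}, write $\mathfrak S(t,0)=\mathcal S(t,0)+\mathcal R(t,0)$ with $\mathcal R(t,0)\in\mathcal B((0,T)^2;\mathcal L(H^\sigma;H^{\sigma+1}))$ gaining one derivative. Substituting into $(\mathcal D^*\mathcal D\,\mathfrak S(t,0)\Sigma V_*,\mathfrak S(t,0)\Sigma V_*)$ and expanding bilinearly gives the principal term $(\mathcal S(t,0)^*\mathcal D^*\mathcal D\,\mathcal S(t,0)\Sigma V_*,\Sigma V_*)$ plus three cross/remainder terms involving at least one factor $\mathcal R(t,0)$. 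Since $\mathcal D^*\mathcal D$ is an operator of order $0$ and $\mathcal S(t,0)$, $\mathfrak S(t,0)$ are bounded on every $H^\sigma$ while $\mathcal R(t,0)$ maps $H^\sigma\to H^{\sigma+1}$, each of these three terms defines a bounded operator $H^\sigma\to H^{\sigma+1}$ (one uses duality to move the smoothing onto one side: $\mathcal R(t,0)^*$ is also one-smoothing by the regularity statement of Lemma \ref{chapter5decomlem}). Integrating over $t\in[0,T]$ and using local boundedness in $t$ of all operators involved, these three terms assemble into $\mathcal R_T\in\mathcal B_{loc}(\R^+;\mathcal L(H^\sigma;H^{\sigma+1}))$, which is \eqref{prop-mathR_T}. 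The principal term is $((\int_0^T \mathcal S(t,0)^*\mathcal D^*\mathcal D\,\mathcal S(t,0)\,dt)\,\Sigma V_*,\Sigma V_*)$; since $\mathcal S(t,0)$ is block-diagonal, $\mathcal S(t,0)^*\mathcal D^*\mathcal D\,\mathcal S(t,0)$ is exactly the $2\times 2$ block matrix appearing in \eqref{G_T} with entries $S_\pm(t,0)^* D^{ij} S_\pm(t,0)$, so the time integral is precisely $\mathcal G_T$. This establishes \eqref{obsprop}.

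The only genuinely delicate point is keeping careful track of orders when moving $\mathcal R(t,0)$ across the pairing: one must check that $\mathcal R(t,0)^*$ still gains a derivative and that composing with the order-$0$ operators $D^{ij}$ and the order-$0$ operators $S_\pm$, $\mathfrak S$ does not destroy this gain — this is exactly what the regularity conclusions of Lemma \ref{chapter5decomlem} (and the mapping properties of $S_\pm$, $\mathfrak S$ recorded just before it) provide, so it causes no real trouble. A minor subtlety is that $\Lambda^{-1}$ appears in $\mathcal D$; but $\Lambda^{-1}$ is itself order $-1$, hence bounded on all $H^\sigma$, and the blocks $D^{ij}$ are honest order-$0$ operators (each term is a composition of order-$0$ and order-$(\pm1)$ factors whose orders add to $0$), so the Gårding-type and Egorov arguments of the later sections can be applied to $\mathcal G_T$ afterwards. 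Apart from this, the proof is an algebraic expansion of $\mathcal D^*\mathcal D$ followed by the substitution $\mathfrak S=\mathcal S+\mathcal R$; I expect no further obstacle.
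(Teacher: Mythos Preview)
Your proposal is correct and follows essentially the same approach as the paper: express $DV$ in the half-wave variables via \eqref{4.3.3}, expand $\mathcal D^*\mathcal D$ to obtain the four blocks $D^{ij}$, write $\mathcal V(t)=\mathfrak S(t,0)\Sigma V_*$, and then split $\mathfrak S=\mathcal S+\mathcal R$ by Lemma \ref{chapter5decomlem} to separate $\mathcal G_T$ from the one-smoothing remainder $\mathcal R_T$. Your write-up is in fact slightly more explicit than the paper's about the block computation and the duality/adjoint check for $\mathcal R(t,0)^*$, but the strategy is identical.
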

\begin{proof}
The proof of Proposition \ref{prop1} essentially relies on some computations and an application of Lemma \ref{chapter5decomlem}.  According to \eqref{4.3.3}
%2??? \eqref{lemma: Regularity} [LL15], ??????? \eqref{mainGri} come from Egorov Theorem.
%\be%\displaystyle
%    \begin{split}
%    \int_0^T (DV,DV)_{(L^2)^K}dt&=\int_0^T (\f{D(\Lambda)^{-1}}{2i}(V_+-V_-),\f{D(\Lambda)^{-1}}{2i}(V_+-V_-))_{( H^1)^K}dt \\
%    &=\int_0^T (\Lambda\f{D(\Lambda)^{-1}}{2i}(V_+-V_-),\Lambda\f{D(\Lambda)^{-1}}{2i}(V_+-V_-))_{( L^2))^K}dt \\
%    &=\int_0^T (\f{(\Lambda)^{-1}}{2}D^*\Lambda^2\f{D(\Lambda)^{-1}}{2}(V_+-V_-),(V_+-V_-))_{( L^2))^N}dt. \\
%    \end{split}
%\ee
\be
\begin{split}
\int_0^T (DV,DV)_{(L^2)^K}dt=&\int_0^T
 \left(\f{D_1\Lambda^{-1}}{2i}(V_+-V_-),\f{D_1\Lambda^{-1}}{2i}(V_+-V_-)\right)_{( L^2)^K} \\
 &+\left(\f{D_0}{2}(V_++V_-),\f{D_0}{2}(V_++V_-)\right)_{( L^2)^K}\\
 &+\left(\f{D_0}{2}(V_++V_-),\f{D_1\Lambda^{-1}}{2i}(V_+-V_-)\right)_{( L^2)^K}\\
&+\left(\f{D_1\Lambda^{-1}}{2i}(V_+-V_-),\f{D_0}{2}(V_++V_-)\right)_{( L^2)^K}dt.\\
\end{split}
\ee
Denote ~$\hat{D}^1=\left(
                          \begin{array}{cc}
                             D^{11}&  D^{12} \\
                             D^{21}&  D^{22}\\
                          \end{array}
                        \right)$.
  Since ~$V_*=(V_0,V_1)'$, we have
\be\lab{prop-obs-equ}
\int_0^T (DV,DV)_{(L^2)^K}dt=\int_0^T\left(\hat{D}^1\mathfrak{S}(t,0)\Sigma V_*,\mathfrak{S}(t,0)\Sigma V_*\right)_{(L^2)^{2N} }.
\ee
  According to Lemma \ref{chapter5decomlem},
\be\lab{prop-obs-decom}
(V_+,V_-)^{tr}=\mathfrak{S}(t,0)\Sigma V_*=(\mathcal{S}(t,0)+R(t,0))\Sigma V_*.
                      \ee
     Combining  \eqref{prop-obs-decom} with \eqref{prop-obs-equ}, we have
       \beq\begin{split}
\int_0^T (DV,DV)_{(L^2)^K} dt=&\int_0^T (\mathcal{S}^*(t,0) \hat{D}^1 \mathcal{S}(t,0) \Sigma V_*,\Sigma V_*)_{(L^2)^{2N}}\\
                                                          &+(R^*(t,0)  \hat{D}^1 \mathcal{S}(t,0)+\mathcal{S}^*(t,0) \hat{D}^1R(t,0)\Sigma V_*,\Sigma V_*)_{(L^2)^{2N}}\\
                                                        &+(R^*(t,0) \hat{D}^1R(t,0)\Sigma V_*,\Sigma V_*)_{(L^2)^{2N} }.
\end{split}
\eeq
Define
\be
\nonumber
\mathcal{R}_T=R^*(t,0)  \hat{D}^1 \mathcal{S}(t,0)+\mathcal{S}^*(t,0) \hat{D}^1R(t,0)+R^*(t,0) \hat{D}^1R(t,0)
\ee
with
\be
\nonumber
    \begin{split}
        \mathcal{G}_T     =\mathcal{S}^*(t,0) \hat{D}^1 \mathcal{S}(t,0),
    \end{split}
\ee
we obtain \eqref{obsprop}.
We claim that $\mathcal{R}_T$ satisfies \eqref{prop-mathR_T}.
Indeed,
 $\mathcal{S}(t,0)$ preserves the
regularity thanks to \eqref{RRRR} in Lemma \ref{chapter5decomlem} and $\hat{D}^1$ is a Pseudodifferential operator of order 0.
\enp
\begin{prop}
\label{propHUM2}
 $\mathcal{G}_T$ (defined in \eqref{G_T}) has a decomposition as  ~$\mathcal{G}_T=G_T+R_T$ , ~where $R_T$~ satisfies
$R_T\in \mathcal{B}_{loc}(\R^+;\mathcal{L}(H^\sigma(\mathcal{M};\C^{2N});H^{\sigma+1}(\mathcal{M};\C^{2N})))$ for all $\sigma\in \R$
and ~$G_T\in \mathcal{C}^\infty(\R^+;\Psi_{phg}^0(\mathcal{M};\C^{2N\times 2N}))$ has principal symbol
{
\be\displaystyle
\lab{mainGriprop}
 \begin{split}
    \sigma_0(G_T)&=\left(
                    \begin{array}{cc}
                  G_{\rho}^+(T)  & 0 \\
                      0 & G_{\rho}^-(T)   \\
                    \end{array}
                  \right), \\
     G_{\rho}^{\pm}(T)&=\f{1}{4}\int_0^TR^{*}_{\pm}(0,t;\varphi^{\mp}_{t}(\rho))d_{\pm}^{*}(t,\varphi^{\mp}_{t}(\rho))d_{\pm}(t,\varphi^{\mp}_{t}(\rho))R_{\pm}(0,t;\varphi^{\mp}_{t}(\rho)) dt, \\
  %   G_{\rho}^-(T)&= \f{1}{4}\int_0^TR^{*}_-(0,t,\chi_{t,0}(\rho))d_-^{*}(t,\chi_{t,0}(\rho))d_-(t,\chi_{t,0}(\rho))R_-(0,t,\chi_{t,0}(\rho))dt.
 \end{split}
 \ee
 where $R_\pm(s,t;\rho)$ satisfies
\be%\displaystyle
\lab{main-rprop}
\begin{split}
\f{dR_\pm(\tau,t;\rho)}{d\tau}=R_\pm(\tau,t;\rho)b_\pm(\tau,\varphi^{\mp}_{\tau-t}(\rho)),\quad
R_\pm(t,t;\rho)=Id_{N\times N},
\end{split}
\ee
}
with $b_\pm=\sigma_0(B_\pm)=\f{1}{2} a_0\pm\f{1}{2}\f{a_1}{i|\xi|_x}$, $d_\pm= d_0\pm\f{d_1}{i|\xi|_x}$ and $\varphi^{\mp}_{t}(\rho)$ is the Hamiltonian flow of ~$\mp |\xi|_x $ initiated at $\rho$.
\end{prop}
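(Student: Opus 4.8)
The plan is to exploit the block structure of $\mathcal{G}_T$ in \eqref{G_T}: each block is of the form $\int_0^T S_\pm(t,0)^* D^{ij} S_\pm(t,0)\,dt$, and the propagator $S_\pm(t,s)$ (the solution operator of $(\pa_t\mp i\Lambda)+B_\pm$, see Lemma \ref{chapter5decomlem}) is precisely the operator $S(t,s)$ of Theorem \ref{system egorov} with $c=\pm1$ and $W_0=B_\pm$; in particular $w_0=b_\pm=\sigma_0(B_\pm)$ and the flow $\chi^c_{\tau,s}$ of \eqref{eqnHamiltonc} is the Hamiltonian flow of $\mp|\xi|_x$, i.e.\ $\varphi^{\mp}_{\tau-s}$. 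The two diagonal blocks each involve a single speed and will be handled by the System Egorov Theorem; the two anti-diagonal blocks mix the two speeds, and I will show that their $t$-integrals gain one derivative, so that they are absorbed into $R_T$ and leave no contribution in $\sigma_0(G_T)$.

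For the diagonal blocks, I would first record the purely symbolic identities $\sigma_0(D^{11})=\f{1}{4}\,d_+^* d_+$ and $\sigma_0(D^{22})=\f{1}{4}\,d_-^* d_-$, obtained from \eqref{G_T} by replacing $\Lambda^{-1}$ by $|\xi|_x^{-1}$ and regrouping with $d_\pm=d_0\pm \f{d_1}{i|\xi|_x}$. Then I apply Theorem \ref{system egorov} with $P_0=D^{11}$ (resp.\ $D^{22}$), $c=+1$ (resp.\ $-1$), the Egorov parameter ``$s$'' being our integration variable $t$ and its ``$t$'' being $0$; this gives $S_\pm(t,0)^* D^{\pm\pm} S_\pm(t,0)=Q_\pm(0,t)+R_\pm(0,t)$ with $Q_\pm(0,\cdot)\in\mathcal{C}^\infty((0,T);\Psi^0_{phg})$ of principal symbol $q_\pm(0,t,\rho)=R_1^*(0,t;\varphi^{\mp}_t(\rho))\,\f{1}{4}\, d_\pm^*(t,\varphi^{\mp}_t(\rho))\,d_\pm(t,\varphi^{\mp}_t(\rho))\,R_1(0,t;\varphi^{\mp}_t(\rho))$ and with $R_\pm(0,t)\in\mathcal{B}((0,T)^2;\mathcal{L}(H^\sigma,H^{\sigma+1}))$ uniformly. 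Integrating over $[0,T]$, $\int_0^T Q_\pm(0,t)\,dt\in\mathcal{C}^\infty(\R^+;\Psi^0_{phg})$ has principal symbol $\int_0^T q_\pm(0,t,\cdot)\,dt$, while $\int_0^T R_\pm(0,t)\,dt\in\mathcal{B}_{loc}(\R^+;\mathcal{L}(H^\sigma,H^{\sigma+1}))$. Finally, comparing the defining ODE \eqref{formulaR} of $R_1$ (with $w_0=b_\pm$, $c=\pm1$, hence $\chi^c_{\tau,s}=\varphi^{\mp}_{\tau-s}$) with \eqref{main-rprop}, one sees these are the same linear ODE with the same initial value; thus $R_1=R_\pm$ and $\int_0^T q_\pm(0,t,\cdot)\,dt=G_\rho^{\pm}(T)$ as in \eqref{mainGriprop}.

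For the anti-diagonal blocks I would invoke the same non-stationary-phase mechanism as in the scalar case \cite{DehmanLebeau09,LaurentLeautaud16}: since $S_+$ and $S_-$ propagate along opposite Hamiltonian flows, writing $S_-(t,0)=S_+(t,0)\,S_+(0,t)S_-(t,0)$ and conjugating $D^{12}$ through $S_+$ via Theorem \ref{system egorov} shows that, modulo terms whose $t$-integral already maps $H^\sigma$ into $H^{\sigma+1}$, the operator $S_+(t,0)^* D^{12} S_-(t,0)$ is (schematically) a highly oscillating operator $\tilde Q(t)\,e^{-2it\Lambda}$ with $\tilde Q(t)\in\Psi^0_{phg}$; an integration by parts in $t$ — using $e^{-2it\Lambda}=\f{i}{2}\Lambda^{-1}\pa_t e^{-2it\Lambda}$, the boundary terms at $t=0,T$ also carrying a $\Lambda^{-1}$ — produces the extra factor $\Lambda^{-1}$, so $\int_0^T S_+(t,0)^* D^{12} S_-(t,0)\,dt\in\mathcal{B}_{loc}(\R^+;\mathcal{L}(H^\sigma,H^{\sigma+1}))$, and similarly for the $D^{21}$ block. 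Setting $G_T=\mathrm{diag}\big(\int_0^T Q_+(0,t)\,dt,\ \int_0^T Q_-(0,t)\,dt\big)$ and $R_T=\mathcal{G}_T-G_T$ then gives $\mathcal{G}_T=G_T+R_T$ with the asserted properties of $G_T$ and $R_T$ and with $\sigma_0(G_T)$ as in \eqref{mainGriprop}. The main obstacle is this anti-diagonal estimate: Theorem \ref{system egorov} only conjugates a symbol by a single propagator, so the decoupling of the two half-waves, and the resulting gain of one derivative after integration in $t$, must be carried out by hand; the symbolic identifications and the matching with the ODE Gramian $G_\rho^{\pm}(T)$ are then routine bookkeeping.
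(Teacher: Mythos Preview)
Your proposal is correct and follows essentially the same approach as the paper: System Egorov (Theorem \ref{system egorov}) on the diagonal blocks with $c=\pm1$, $W_0=B_\pm$, $P_m=D^{11}$ (resp.\ $D^{22}$), together with the symbolic identities $\sigma_0(D^{11})=\tfrac14 d_+^* d_+$, $\sigma_0(D^{22})=\tfrac14 d_-^* d_-$, and the identification of the Egorov resolvent $R_1$ with $R_\pm$. The only difference is the treatment of the anti-diagonal blocks: the paper simply invokes Lemma \ref{lemma: Regularity} (stated right before the proof and itself referred to \cite[Section A.3]{LaurentLeautaud16}), which gives the one-derivative gain for $\int_0^T S_\pm(t,0)^* B_0 S_\mp(t,0)\,dt$ directly, whereas you sketch the underlying integration-by-parts mechanism inline; both routes are valid, but citing Lemma \ref{lemma: Regularity} spares you the ``schematic'' reduction to $\tilde Q(t)e^{-2it\Lambda}$, which is not literally true when $B_\pm\neq0$ and would otherwise need a more careful commutator argument (as in Lemma \ref{l:comm-evol-lambda}).
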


The proof relies on Egorov Theorem and the following Lemma that deals with the anti-diagonal terms of the Gramian control operator whose proof is postponed to the Appendix.
\begin{lem}
\lab{lemma: Regularity}
Assume that $\mathcal{I} $ is an interval in ~$\R$, let~$$H_\pm(t)=\pm\Lambda Id_{N\times N}+iW_0(t),$$
 %$H_+ , H_- \in \mathcal{C}^\infty(\mathcal{I};\Psi^1_{phg} (M))$
 with $W_0\in \mathcal{C}^\infty (0,T;\Psi^0_{phg}(\mathcal{M};\C^{N\times N})),$
then for any  $B_0 \in \mathcal{C}^\infty (0,T;\Psi^m_{phg}(\mathcal{M};\C^{N\times N}))$, $m \in \R$, we can define
\begin{align*}
    B(T) = \int_0^T  S_\pm(t,0)^* B_0 S_\mp(t,0) d t ,
\end{align*}
and we have $B \in \mathcal{B}_{loc}(0,T;\mathcal{L}((H^{\sigma})^N, (H^{\sigma+1-m })^N)$ for all $\sigma  \in \R$.
\end{lem}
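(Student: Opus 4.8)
The plan is to exploit the fact that $S_+(t,0)^*$ and $S_-(t,0)$ propagate microlocally along the Hamiltonian flow of $|\xi|_x$ in \emph{opposite} directions, so that the integrand $S_+(t,0)^*B_0S_-(t,0)$ is non-stationary in $t$; integrating in $t$ then yields a gain of one derivative through an integration by parts that uses the two evolution equations. This is the $N\times N$ analogue of a classical argument, compare the scalar discussion in \cite{LaurentLeautaud16,DehmanLebeau09}. The only facts I would use about $S_\pm$ are that $S_\pm(t,0)$ and $S_\pm(t,0)^*$ belong to $\mathcal{B}_{loc}(\R^+;\mathcal{L}((H^\sigma)^N))$ for every $\sigma\in\R$ (well-posedness, see Lemma \ref{chapter5decomlem} and the Appendix), together with the weak-sense identities $\partial_t S_\pm(t,0)=(\pm i\Lambda-W_0(t))S_\pm(t,0)$ and $\partial_t S_\pm(t,0)^*=S_\pm(t,0)^*(\mp i\Lambda-W_0(t)^*)$, as for \eqref{seq}. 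By symmetry it suffices to treat the term with $S_+$ on the left and $S_-$ on the right.

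First I would construct a parametrix $\widetilde B\in\mathcal{C}^\infty(0,T;\Psi_{phg}^{m-1}(\mathcal{M};\C^{N\times N}))$ solving the operator equation $\Lambda\widetilde B(t)+\widetilde B(t)\Lambda=iB_0(t)$ modulo $\mathcal{C}^\infty(0,T;\Psi^{-\infty}(\mathcal{M};\C^{N\times N}))$. Since $\sigma_1(\Lambda)=|\xi|_x\,Id_{N\times N}$ is scalar and elliptic of order $1$, the leading symbol equation is $2|\xi|_x\,\widetilde b=i\,\sigma_m(B_0)$, solved by $\widetilde b=\tfrac{i}{2|\xi|_x}\sigma_m(B_0)\in S_{phg}^{m-1}$; the remaining terms of the symbolic expansion are corrected inductively, at each step by dividing once more by the elliptic symbol $2|\xi|_x$, and an asymptotic summation produces $\widetilde B$ with the announced property. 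No noncommutativity obstruction appears because the leading symbol of $\Lambda$ is a multiple of the identity.

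Next I would differentiate $t\mapsto S_+(t,0)^*\widetilde B(t)S_-(t,0)$ using the identities above, which gives
\[
\partial_t\big[S_+(t,0)^*\widetilde B(t)S_-(t,0)\big]=-i\,S_+(t,0)^*\big(\Lambda\widetilde B(t)+\widetilde B(t)\Lambda\big)S_-(t,0)+S_+(t,0)^*E_0(t)S_-(t,0),
\]
with $E_0(t):=\partial_t\widetilde B(t)-W_0(t)^*\widetilde B(t)-\widetilde B(t)W_0(t)\in\mathcal{C}^\infty(0,T;\Psi_{phg}^{m-1})$. Writing $\Lambda\widetilde B+\widetilde B\Lambda=iB_0-iR_0$ with $R_0\in\mathcal{C}^\infty(0,T;\Psi^{-\infty})$, i.e. $-i(\Lambda\widetilde B+\widetilde B\Lambda)=B_0-R_0$, and integrating over $[0,T]$, I would obtain
\[
B(T)=\int_0^T S_+(t,0)^*B_0(t)S_-(t,0)\,dt=S_+(T,0)^*\widetilde B(T)S_-(T,0)-\widetilde B(0)-\int_0^T S_+(t,0)^*\big(E_0(t)-R_0(t)\big)S_-(t,0)\,dt .
\]

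Finally I would verify that every term on the right lies in $\mathcal{L}((H^\sigma)^N,(H^{\sigma+1-m})^N)$ with norm locally bounded in $T$: the two boundary terms because $\widetilde B(0),\widetilde B(T)\in\Psi_{phg}^{m-1}$ map $(H^\sigma)^N$ continuously into $(H^{\sigma+1-m})^N$ while $S_-(T,0)$ and $S_+(T,0)^*$ preserve every Sobolev regularity; and the integral term because $E_0(t)-R_0(t)$ has order $\le m-1$, so $t\mapsto S_+(t,0)^*(E_0(t)-R_0(t))S_-(t,0)$ is a locally bounded, strongly continuous family with values in $\mathcal{L}((H^\sigma)^N,(H^{\sigma-(m-1)})^N)=\mathcal{L}((H^\sigma)^N,(H^{\sigma+1-m})^N)$, whence its integral over the bounded interval $[0,T]$ enjoys the same bound. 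The main technical points I expect are the elliptic solvability of $\Lambda\widetilde B+\widetilde B\Lambda=iB_0$ modulo smoothing operators in the matrix-valued setting, and the careful handling of the $t$-dependence together with the weak meaning of $\partial_t$ for these operator families; both are routine but must be done with care. One could also organize the argument through Theorem \ref{system egorov}, writing $S_+(t,0)^*B_0S_-(t,0)=\big(S_+(t,0)^*B_0S_+(t,0)\big)S_+(0,t)S_-(t,0)$ and noting that $Y(t):=S_+(0,t)S_-(t,0)$ solves $\partial_tY=-2i\,S_+(0,t)\Lambda S_-(t,0)$, which again exhibits the derivative gain after an integration by parts; the direct computation above seems shortest.
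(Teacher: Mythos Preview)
Your argument is correct and is precisely the standard integration-by-parts mechanism (exploiting that $S_+^*$ and $S_-$ propagate in opposite directions so that the phase is non-stationary) to which the paper defers by citing \cite[Section A.3]{LaurentLeautaud16}. The only organizational difference is that the paper first records the commutator identity of Lemma~\ref{l:comm-evol-lambda} (which amounts to moving $\Lambda$ across $S_\pm$ at the cost of a bounded error) as the basic tool, whereas you package the same computation via a parametrix $\widetilde B$ solving $\Lambda\widetilde B+\widetilde B\Lambda=iB_0$ modulo smoothing; these are two equivalent ways of writing the same gain-of-one-derivative argument.
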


\begin{proof}[Proof of Proposition \ref{propHUM2}]
Integrating  the anti-diagonal terms of $\mathcal{G}_T$ in \eqref{G_T}  on $[0,T]$ yields
\be\begin{split}
\int_0^TS(t,0)^*_+ D^{12}S(t,0)_- dt \in \mathcal{B}_{loc}(\mathcal{I} ;\mathcal{L}((H^{\sigma})^N, (H^{\sigma+1-m })^N),\\
\int_0^TS(t,0)^*_- D^{21}S(t,0)_+ dt \in \mathcal{B}_{loc}(\mathcal{I} ;\mathcal{L}((H^{\sigma})^N, (H^{\sigma+1-m })^N).
\end{split}
\ee
Next we claim that there exist $G_{\rho}^+(T),G_{\rho}^-(T)$ satisfying \eqref{mainGriprop} and
{\be\lab{xxxxxx} \begin{split}
\int_0^TS(t,0)^*_+D^{11}S(t,0)_+dt-G_{\rho}^+(T)\in \mathcal{B}_{loc}(\mathcal{I} ;\mathcal{L}((H^{\sigma})^N, (H^{\sigma+1-m })^N),\\
\int_0^T S(t,0)^*_-D^{22}S(t,0)_-dt-G_{\rho}^-(T)\in \mathcal{B}_{loc}(\mathcal{I} ;\mathcal{L}((H^{\sigma})^N, (H^{\sigma+1-m })^N).
\end{split}
\ee}
We only detail $S(t,0)^*_+D^{11}S(t,0)_+$, the other case being similar.
Using Theorem \ref{system egorov} with
\begin{itemize}
\item $c=1$, so that $\chi_{\tau,s}^{1}(\rho)=\varphi_{\tau-s}^{-}(\rho)=\varphi_{s-\tau}(\rho)$ and $\chi_{t,0}^{c=1}=\varphi_{-t}$
\item $W_{0}=B_{+}$
 \item { $P_{m}=D^{11}=\f{1}{2}(D_0+\f{D_1\Lambda^{-1}}{i})^*\cdot\f{1}{2}(D_0+\f{D_1\Lambda^{-1}}{i})$}
\end{itemize}
gives $S(t,0)^*_+D^{11}S(t,0)_+= R^{*}_+(0,t,\varphi_{-t}(\rho))d^{11}(t,\varphi_{-t}(\rho))R_+(0,t,\varphi_{-t}(\rho))$, where $R_{+}$ solves
\be
 \f{d R_+(\tau,s;\rho)}{d\tau}=R_+(\tau,s;\rho)b_{+}(\tau,\varphi^{-}_{\tau-s}(\rho)), \quad  R_+(s,s;\rho)=Id_{N\times N}
       \ee
and $b_{+}=\sigma_0(B_+)=\f{1}{2}\sigma_0(A_0-iA_1\Lambda^{-1})=\f{1}{2}(a_0+\f{a_1}{i|\xi|_x}), d_+=\sigma_0(D_0+\f{D^1\Lambda^{-1}}{i})=d_0+\f{d_1}{i|\xi|_x}$,

 The other case $S(t,0)^*_-D^{22}S(t,0)_-$ is the same with $c=-1$, $\chi_{t,0}^{-1}=\varphi_{t}$, $W_{0}=B_{-}$, $P_{m}=D^{22}$.
 \end{proof}

\noindent{\bf $\bullet$~ Gramian operator and weak observability inequality \eqref{mainobsinw}}

As a direct consequence (or verification) of Theorem \ref{thmHUM}, $\sigma_0(G_T)$ is a nonnegative symmetric matrix. Thanks to $N\times N$ Sharp G{\aa}rding Inequality \eqref{e:garding-unif1}, we can construct a connection between weak observability inequality \eqref{mainobsinw} and Gramian control operator as follows.

\begin{prop}
\lab{main-prop-two}
Let ~$T>0$~. Define
\be
\lab{RR}\begin{split}
\mathcal{R}_2(T)=\min\Big\{\min\limits_{\rho_0\in S^*\mathcal{M}}\sup\left\{s\in \R\left|\beta^{*}(G_{\rho}^{+}(T)-sId_{N\times N})\beta\geq 0, \forall \beta\in \C^N\right.\right\},  \\  \min\limits_{\rho_0\in S^*\mathcal{M}}\sup\left\{s\in \R\left|\beta^{*}(G_{\rho}^{-}(T)-sId_{N\times N})\beta\geq 0, \forall \beta\in \C^N\right.\right\} \Big\}
\end{split}
\ee
and $G_{\rho}^\pm(T)$ are defined in \eqref{mainGri}. If for all $\rho_0\in S^*\mathcal{M}$, ~$\sigma_0(G_T)$~is a symmetric positive matrix, then the weak observability inequality \eqref{mainobsinw} holds for all solutions of System \eqref{mainwave1.1} with
\be
C^2_{obs}(T)\geq \f{1}{2\mathcal{R}_2(T)}.
\ee
\end{prop}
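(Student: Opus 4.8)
The plan is to combine the representation of the observation functional from Theorem~\ref{thmHUM} with the $N\times N$ Sharp G{\aa}rding Inequality of Theorem~\ref{cor:unif-bound}. Concretely, starting from
\be
\lab{plan-start}
\int_0^T\|DV(t)\|_{(L^2)^K}^2dt=((G_T+\tilde{\mathcal{R}}_T)\Sigma V_*,\Sigma V_*)_{(L^2)^{2N}},
\ee
I would first fix a real parameter $s<\mathcal{R}_2(T)$ and consider the operator $G_T-s\,\mathrm{Id}_{2N\times 2N}$. Its principal symbol is $\sigma_0(G_T)-s\,\mathrm{Id}$, which by the block-diagonal form in \eqref{mainGri} and the definition \eqref{RR} of $\mathcal{R}_2(T)$ is a nonnegative Hermitian matrix at every point of $S^*\mathcal{M}$ (and hence, by homogeneity of order $0$, on all of $T^*\mathcal{M}\setminus\{0\}$): indeed $\beta^*(G_\rho^\pm(T)-s\,\mathrm{Id})\beta\ge 0$ for all unit $\beta$ is exactly what $s\le\mathcal{R}_2(T)$ encodes. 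Therefore Theorem~\ref{cor:unif-bound} applies to $A_t := G_T - s\,\mathrm{Id}$ (which is time-independent here, so the uniformity in $t$ is trivial), yielding a constant $C>0$ with
$((G_T-s\,\mathrm{Id})W,W)_{(L^2)^{2N}}\ge -C\|W\|_{(H^{-1/2})^{2N}}^2$ for all $W\in (H^{1/2})^{2N}$ (approximating by density).

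Next I would absorb the regularizing remainder. Since $\tilde{\mathcal{R}}_T\in\mathcal{B}(0,T_0;\mathcal{L}((H^\sigma)^{2N},(H^{\sigma+1})^{2N}))$ gains one derivative, for $W\in (H^0)^{2N}$ we have $|(\tilde{\mathcal{R}}_T W,W)_{(L^2)^{2N}}|\le C'\|W\|_{(H^{-1/2})^{2N}}^2$ (pairing $H^{1/2}$ against $H^{-1/2}$, using that a gain of one derivative from $H^{-1/2}$ lands in $H^{1/2}$). Combining with the G{\aa}rding estimate gives, with $W=\Sigma V_*$,
\be
\lab{plan-combine}
\int_0^T\|DV(t)\|_{(L^2)^K}^2dt \ge s\,\|\Sigma V_*\|_{(L^2)^{2N}}^2 - (C+C')\,\|\Sigma V_*\|_{(H^{-1/2})^{2N}}^2 .
\ee
Now I invoke the (almost) isometry property of $\Sigma$ recorded in \eqref{halfwaveeq}: $\|\Sigma V_*\|_{(L^2)^{2N}}^2 = 2\|(V_0,V_1)\|_{(H^1)^N\times(L^2)^N}^2$ and $\|\Sigma V_*\|_{(H^{-1/2})^{2N}}^2 = 2\|(V_0,V_1)\|_{(H^{1/2})^N\times(H^{-1/2})^N}^2$. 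Substituting into \eqref{plan-combine} produces exactly the Weak Observability Inequality \eqref{mainobsinw} with $C^2_{obs}=\tfrac{1}{2s}$ and $c_1=(C+C')/s$. Letting $s\uparrow\mathcal{R}_2(T)$ (the hypothesis that $\sigma_0(G_T)$ is positive for every $\rho_0$ guarantees $\mathcal{R}_2(T)>0$, so such $s>0$ exists) gives the claimed bound $C^2_{obs}(T)\ge \tfrac{1}{2\mathcal{R}_2(T)}$; more precisely one obtains the inequality for every $s<\mathcal{R}_2(T)$, hence with constant arbitrarily close to $\tfrac{1}{2\mathcal{R}_2(T)}$, which is what the statement asserts.

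The only genuinely delicate point is matching the regularity bookkeeping so that the G{\aa}rding error term and the remainder term are both controlled by the \emph{same} norm $\|(V_0,V_1)\|_{(H^{1/2})^N\times(H^{-1/2})^N}$ appearing in \eqref{mainobsinw} — this is why one works at the level of the half-wave variables $V_+(0),V_-(0)$ in $(L^2)^{2N}$, where $H^1\times L^2$ for $(V_0,V_1)$ corresponds to $L^2$ for $\Sigma V_*$ and $H^{1/2}\times H^{-1/2}$ corresponds to $H^{-1/2}$, so that a gain of one full derivative in $\tilde{\mathcal{R}}_T$ and the $H^{-1/2}$ loss in G{\aa}rding both land precisely on the weak term. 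Everything else is a direct substitution; no new analytic input beyond Theorems~\ref{thmHUM} and~\ref{cor:unif-bound} is needed.
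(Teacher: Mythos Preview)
Your proposal is correct and follows essentially the same approach as the paper: represent the observation via Theorem~\ref{thmHUM}, apply the matrix Sharp G{\aa}rding inequality to $G_T - s\,\mathrm{Id}$, absorb the one-smoothing remainder into the $H^{-1/2}$ term, and translate back via the isometry \eqref{halfwaveeq}. The only minor difference is that the paper observes the supremum in \eqref{RR} is actually a maximum (it is the smallest eigenvalue of the Hermitian matrix $G_\rho^\pm(T)$), so it applies G{\aa}rding directly with $s=\mathcal{R}_2(T)$ and avoids your limiting argument $s\uparrow\mathcal{R}_2(T)$; this spares you from having to check that the G{\aa}rding constant $C$ stays bounded as $s\to\mathcal{R}_2(T)$.
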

\bnp
%combine \eqref{obs-2.0} with \eqref{prop-mathR_T} and according to the property of pseudodifferential operator,
%%??????????????
%\be
%\begin{split}
%\int_0^T\|DV(t)\|_{(L^2)^K}^2dt\geq& ((\mathcal{G}_T)\Sigma V_*,\Sigma V_*)_{(L^2)^N\times (L^2)^N }- C \|\Sigma V_*\|^2_{(H^{-\f{1}{2}}(M))^N\times (H^{\f{-1}{2}}(M))^N}   \\
%\end{split}
%\ee
%By using cor \ref{cor:unif-bound} of  G\"arding inequality , we have
According to Theorem \ref{thmHUM}, we have
\be\lab{prostep1}
\begin{split}
    \int_0^T\|DV(t)\|_{(L^2)^K}^2dt&=\left((G_T+\mathcal{R}_T)\Sigma V_*,\Sigma V_*\right)_{(L^2)^{2N} }\\
    %&=((G_T+\mathcal{R}_T+R_T)\Sigma V_*,\Sigma V_*)_{(L^2)^N\times (L^2)^N }\\
    &\geq (G_T\Sigma V_*,\Sigma V_*)_{(L^2)^{2N} }-C^1\|\Sigma V_*\|_{ (H^{-\f{1}{2}})^{2N} }^2,
\end{split}
\ee
 and $\sigma(G_T)$ is a symmetric positive matrix.
Then $\sigma(G_T)-\mathcal{R}_2(T)Id_{2N\times 2N}$ is a nonnegative symmetric matrix (here, we are using that actually, the supremum in \eqref{RR} is actually a maximum). By
$N\times N$ Sharp G{\aa}rding Inequality \eqref{e:garding-unif1} and
\eqref{prostep1},
we obtain
\beq\displaystyle
\int_0^T\|DV(t)\|_{(L^2)^K}^2dt\geq (\mathcal{R}_2(T)\Sigma V_*,\Sigma V_*)_{(L^2)^{2N} }-C^1 \|\Sigma V_*\|^2_{(H^{-\f{1}{2}})^{2N}}.
\eeq
Combining with \eqref{halfwaveeq}, we have
\be\displaystyle
\lab{main-prop-two-last-ineq}
\begin{split}
\frac{1}{\mathcal{R}_2(T)}\int_0^T\|DV(t)\|_{(L^2)^K}^2dt\geq 2\|(V_0,V_1)\|^2_{(H^1)^N\times (L^2)^N}-C\|(V_0,V_1)\|^2_{(H^{\f{1}{2}})^N\times (H^{-\f{1}{2}})^N}.
\end{split}
\ee
 So this finishes the proof of Proposition \ref{main-prop-two}.
 \enp
\noindent{\bf  $\bullet$~Gramian control operator and controllability of ODE system}

For fixed $\rho_{0}\in S^*\mathcal{M}$, we will consider the following control system
\be
    \lab{chapter5mainodepm}
\left\{ \begin{array}{lll}
        \displaystyle \dot{ X}(t)=\f{1}{2}a^{*}_{\pm}(t,\varphi_{ \mp t}(\rho_0))X(t)+\f{1}{2}d^{*}_{\pm}(t,\varphi_{ \mp t}(\rho_0))u(t),  \\
        X(0)=X_0\in \C^N,
    \end{array}\right.
\ee
where $X(t)=(X_1,\cdots,X_N)^{tr}$ is a vector having $N$ components, and ~$a_{\pm}=a_0{\pm}\f{a_1}{i|\xi|_x}$ is a $N\times N$ matrix. $d_{\pm}=d_0{\pm}\f{d_1}{i|\xi|_x}$ is a $K\times N$ matrix. $u(t)\in L^2(0,T;\C^{K})$ is the control.

Next we reveal connections between Gramian control operator and exact controllability of ODE System \eqref{chapter5mainodepm} as follows.

The first step is an elementary but crucial Lemma.
\begin{lem} \label{lmresolr}Let $\rho_0\in S^*\mathcal{M}$.
Denote $\tilde{R}_{\pm}(\cdot,\cdot;\rho_{0})$ the resolvent of System \eqref{chapter5mainodepm} (see \cite[Proposition 1.5]{Coron07} for definition).  Then
\be
\label{resR}
R_{\pm}(\tau,t;\rho_{0})=\widetilde{R}_{\pm}(\tau,t;\varphi_{\pm t}(\rho_0))^*,
\ee
 where $R_{\pm}(\cdot,\cdot;\rho_0)$ is defined in \eqref{main-r}.
Moreover, let the $G_{\pm}$ be the Gramian of the control System \eqref{chapter5mainodepm}(see \cite[Definition 1.10]{Coron07} ). Then
\be
\label{egalG}
 G_{\pm}= G_{\rho_0}^{\pm}(T)
 \ee where $G_{\rho_0}^{\pm}(T)$ is defined in \eqref{mainGri}.
\end{lem}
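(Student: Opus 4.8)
The plan is to prove the two identities \eqref{resR} and \eqref{egalG} in order, the first being the real content and the second a direct consequence. For \eqref{resR}, I would first recall the two differential relations in play. On the PDE side, $R_{\pm}(\tau,t;\rho)$ is defined by \eqref{main-rprop}:
\be
\frac{dR_{\pm}(\tau,t;\rho)}{d\tau}=R_{\pm}(\tau,t;\rho)\,b_{\pm}\bigl(\tau,\varphi^{\mp}_{\tau-t}(\rho)\bigr),\qquad R_{\pm}(t,t;\rho)=Id_{N\times N},
\ee
with $b_{\pm}=\frac12 a_{\pm}$ (using $a_{\pm}=a_0\pm\frac{a_1}{i|\xi|_x}$ and $b_{\pm}=\sigma_0(B_{\pm})=\frac12 a_0\pm\frac12\frac{a_1}{i|\xi|_x}$). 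On the ODE side, the resolvent $\widetilde R_{\pm}(\sigma,\tau;\rho_0)$ of \eqref{chapter5mainodepm} satisfies, by definition of the resolvent of $\dot X=\frac12 a_{\pm}^{*}(t,\varphi_{\mp t}(\rho_0))X$, the equation $\partial_{\sigma}\widetilde R_{\pm}(\sigma,\tau;\rho_0)=\frac12 a_{\pm}^{*}(\sigma,\varphi_{\mp\sigma}(\rho_0))\widetilde R_{\pm}(\sigma,\tau;\rho_0)$ together with $\widetilde R_{\pm}(\tau,\tau;\rho_0)=Id$ and the cocycle property $\widetilde R_{\pm}(\sigma_1,\sigma_2)\widetilde R_{\pm}(\sigma_2,\sigma_3)=\widetilde R_{\pm}(\sigma_1,\sigma_3)$.

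The main step is then a verification by uniqueness of solutions of linear ODEs. Fix $\rho_0$, $t$ and set $F_{\pm}(\tau):=\widetilde R_{\pm}(\tau,t;\varphi_{\pm t}(\rho_0))^{*}$; I claim $F_{\pm}(\tau)=R_{\pm}(\tau,t;\rho_0)$. Both sides equal $Id$ at $\tau=t$, so it suffices to check they solve the same linear equation in $\tau$. Differentiating $F_{\pm}$ and taking adjoints of the resolvent equation gives $\frac{d}{d\tau}F_{\pm}(\tau)=F_{\pm}(\tau)\cdot\tfrac12\bigl(a_{\pm}^{*}(\tau,\varphi_{\mp\tau}(\varphi_{\pm t}(\rho_0)))\bigr)^{*}=F_{\pm}(\tau)\cdot\tfrac12 a_{\pm}(\tau,\varphi_{\mp\tau+t}(\rho_0))$, where I used the group property of the Hamiltonian flow $\varphi_{\mp\tau}\circ\varphi_{\pm t}=\varphi_{\pm t\mp\tau}=\varphi_{\pm(t-\tau)}$. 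Now $\varphi_{\pm(t-\tau)}=\varphi^{\mp}_{\tau-t}$ (since $\varphi^{\mp}_{s}$ is the flow of $\mp|\xi|_x$, i.e. $\varphi^{\mp}_{s}=\varphi_{\mp s}$), so the right-hand side is exactly $F_{\pm}(\tau)\,b_{\pm}(\tau,\varphi^{\mp}_{\tau-t}(\rho_0))$, which is the equation \eqref{main-rprop} defining $R_{\pm}(\cdot,t;\rho_0)$. By uniqueness for linear ODEs, $F_{\pm}\equiv R_{\pm}(\cdot,t;\rho_0)$, which is \eqref{resR}. The one subtlety to be careful about is bookkeeping the sign conventions $\varphi_{t}$ versus $\varphi^{\pm}_{t}$ and the $\mp$ in $\varphi_{\mp t}(\rho_0)$ appearing in \eqref{chapter5mainodepm}; these have to be matched precisely so that the flow composition lands on the right argument. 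I expect this sign-chasing to be the only place where care is genuinely required.

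Finally, for \eqref{egalG}, I would simply substitute. By \cite[Definition 1.10]{Coron07}, the Gramian of \eqref{chapter5mainodepm} is
\be
G_{\pm}=\int_0^T \widetilde R_{\pm}(0,t;\varphi_{\pm t}(\rho_0))\cdot\tfrac14 d_{\pm}^{*}(t,\varphi_{\mp t}(\rho_0))d_{\pm}(t,\varphi_{\mp t}(\rho_0))\cdot\widetilde R_{\pm}(0,t;\varphi_{\pm t}(\rho_0))^{*}\,dt,
\ee
(up to writing the control-operator factor $\tfrac12 d_{\pm}^{*}$ squared). Using \eqref{resR} in the form $\widetilde R_{\pm}(0,t;\varphi_{\pm t}(\rho_0))^{*}=R_{\pm}(0,t;\rho_0)$ — hence $\widetilde R_{\pm}(0,t;\varphi_{\pm t}(\rho_0))=R_{\pm}(0,t;\rho_0)^{*}$ — and noting $\varphi_{\mp t}(\rho_0)=\varphi^{\pm}_{t}(\rho_0)$, the integrand becomes $\tfrac14 R_{\pm}^{*}(0,t;\varphi^{\pm}_t(\rho_0))\,d_{\pm}^{*}(t,\varphi^{\pm}_t(\rho_0))d_{\pm}(t,\varphi^{\pm}_t(\rho_0))\,R_{\pm}(0,t;\varphi^{\pm}_t(\rho_0))$, which is exactly the integrand in the definition \eqref{mainGriprop} of $G_{\rho_0}^{\pm}(T)$. (Here I am evaluating \eqref{main-r}/\eqref{main-rprop} at base point $\varphi^{\mp}_t(\rho_0)$ rather than $\rho_0$, which is consistent with how $G_{\rho}^{\pm}$ is written there.) Hence $G_{\pm}=G_{\rho_0}^{\pm}(T)$, completing the proof.
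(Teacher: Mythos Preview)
Your argument for \eqref{resR} is correct and is essentially identical to the paper's: both differentiate $\widetilde R_{\pm}(\tau,t;\varphi_{\pm t}(\rho_0))^{*}$ in $\tau$, use the flow composition $\varphi_{\mp\tau}\circ\varphi_{\pm t}=\varphi_{\pm(t-\tau)}$, recognize the resulting equation as \eqref{main-rprop}, and conclude by uniqueness for linear ODEs.

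For \eqref{egalG}, however, your bookkeeping goes astray. The Gramian of System \eqref{chapter5mainodepm}, which is the ODE system indexed by the \emph{fixed} parameter $\rho_0$, has resolvent $\widetilde R_{\pm}(\cdot,\cdot;\rho_0)$, not $\widetilde R_{\pm}(\cdot,\cdot;\varphi_{\pm t}(\rho_0))$. So the correct expression (as the paper writes) is
\[
G_{\pm}=\tfrac14\int_0^T\widetilde R_{\pm}(0,t;\rho_0)\,d_{\pm}^{*}(t,\varphi_{\mp t}(\rho_0))d_{\pm}(t,\varphi_{\mp t}(\rho_0))\,\widetilde R_{\pm}^{*}(0,t;\rho_0)\,dt.
\]
To match this with $G_{\rho_0}^{\pm}(T)$ from \eqref{mainGriprop}, one must apply \eqref{resR} at the \emph{shifted} base point $\varphi_{\mp t}(\rho_0)$:
\[
R_{\pm}(0,t;\varphi_{\mp t}(\rho_0))=\widetilde R_{\pm}\bigl(0,t;\varphi_{\pm t}(\varphi_{\mp t}(\rho_0))\bigr)^{*}=\widetilde R_{\pm}(0,t;\rho_0)^{*},
\]
which is exactly what is needed. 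Your display instead inserts $\varphi_{\pm t}(\rho_0)$ into the resolvent slot of $G_{\pm}$, then applies \eqref{resR} at $\rho_0$, landing on $R_{\pm}(0,t;\rho_0)$ rather than $R_{\pm}(0,t;\varphi_{\mp t}(\rho_0))$; the subsequent claim that the integrand features $R_{\pm}(0,t;\varphi^{\pm}_t(\rho_0))$ does not follow from your substitution, and in any case the correct superscript is $\mp$, since $\varphi^{\mp}_{t}=\varphi_{\mp t}$. These are sign/base-point slips rather than a conceptual gap; once corrected as above, your second step coincides with the paper's.
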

\bnp
The equation \eqref{main-r} verified by $R_{\pm}$ is
\be%\displaystyle
\lab{main-rbis}
\begin{split}
\f{d R_\pm(\tau,t;\rho_0)}{d \tau}=R_\pm(\tau,t;\rho_0)b_\pm(\tau,\varphi_{\pm(t-\tau)}(\rho_0)), \quad
R_\pm(t,t;\rho_0)=Id_{N\times N},
\end{split}
\ee
Taking the adjoint of {\color{blue}the definition of the resolvent, see \cite [(1.10) in Proposition 1.5]{Coron07},} applied to System \eqref{chapter5mainodepm} and recalling $b_{\pm}(\tau,\rho_0)=\frac{a_{\pm}(\tau,\rho_0)}{2}$ gives
\be%\displaystyle
\small
\nonumber
\begin{split}
\f{d\widetilde{R}_{\pm}(\tau,t;\rho_0)^*}{d\tau}=\widetilde{R}_{\pm}(\tau,t;\rho_0)^*\f{a_{\pm}(\tau,\varphi_{ \mp \tau}(\rho_0))}{2}=\widetilde{R}_{\pm}(\tau,t;\rho_0)^*b_{\pm}(\tau,\varphi_{\mp  \tau}(\rho_0)),\quad
\widetilde{R}_{\pm}(t,t;\rho_0)=Id_{N\times N}.
\end{split}
\ee
Applying at the point $\varphi_{\pm t}(\rho_0)$ gives
\be%\displaystyle
\small
\lab{R-bis}
\begin{split}
\f{d\widetilde{R}_{\pm}(\tau,t;\varphi_{\pm t}(\rho_0))^*}{d\tau}=\widetilde{R}_{\pm}(\tau,t;\varphi_{\pm t}(\rho_0))^*b_{\pm}(\tau,\varphi_{\pm(t-  \tau)}(\rho_0)),\quad
\widetilde{R}_{\pm}(t,t;\varphi_{\pm t}(\rho_0))=Id_{N\times N}.
\end{split}
\ee
Therefore, for fixed $t$, the two matrices $R_\pm(\tau,t;\rho_0)$ and $\widetilde{R}_{\pm}(\tau,t;\varphi_{\pm t}(\rho_0))^*$ depending on $\tau$ solve the same equation with same initial data, so they are equal by the Cauchy-Lipschitz Theorem.

Concerning the second part of the Lemma, we have
\be
\nonumber
~G_{\pm}=\f{1}{4}\int_0^T\tilde{R}_{\pm}(0,s;\rho_{0})d^{*}_{\pm}(s,\varphi_{ \mp s}(\rho_0))d_{\pm}(s,\varphi_{ \mp s}(\rho_0))\tilde{R}_{\pm}^{*}(0,s;\rho_{0})ds. \ee
Concerning \eqref{mainGri}, it can be written
\be  \nonumber G_{\rho_0}^{\pm}(T)= \f{1}{4}\int_0^TR^{*}_{\pm}(0,t;\varphi_{\mp t}(\rho_0))d_{\pm}^{*}(t,\varphi_{\mp t}(\rho_0))d_{\pm}(t,\varphi_{\mp t}(\rho_0))R_{\pm}(0,t;\varphi_{\mp t}(\rho_0))dt.
\ee
Now, using the obtained identity \eqref{resR}, we get the expected result
\be  \nonumber G_{\rho_0}^{\pm}(T)= \f{1}{4}\int_0^T\widetilde{R}_{\pm}(0,t;\rho_0)d_{\pm}^{*}(t,\varphi_{\mp t}(\rho_0))d_{\pm}(t,\varphi_{\mp t}(\rho_0))\widetilde{R}_{\pm}^{*}(0,t;\rho_0)dt.
\ee
\enp
Theorem \ref{thmHUMintro} is now a direct consequence of Theorem \ref{thmHUM} and of the previous Lemma. Another consequence is the following.
 \begin{prop}\lab{propodexxxxnonsym}
Let $T>0$, for any ~ $\rho_0\in S^*\mathcal{M}$~, we have the equivalence
\begin{enumerate}
\item \label{HermnonSm}Hermitian matrix~$\sigma_0(G_{T})$ (defined in \eqref{mainGri}) is positive.
\item \label{impliquepm} System \eqref{chapter5mainodepm} are exactly controllable in both cases $+$ and $-$.
 \end{enumerate}
\end{prop}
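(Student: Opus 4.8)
The plan is to reduce the proposition to the classical Gramian criterion for exact controllability of finite-dimensional linear time-varying control systems. The starting observation is purely algebraic: in the formula \eqref{mainGri} the symbol $\sigma_0(G_T)$ is block diagonal with blocks $G_\rho^+(T)$ and $G_\rho^-(T)$, so (at the fixed point $\rho_0\in S^*\mathcal{M}$) the Hermitian matrix $\sigma_0(G_T)$ is positive definite if and only if both $G_\rho^+(T)>0$ and $G_\rho^-(T)>0$ in the sense of $N\times N$ Hermitian matrices.

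The next step is to identify each diagonal block with the controllability Gramian of the corresponding ODE. This is exactly the content of Lemma \ref{lmresolr}: the identity \eqref{egalG} states that $G_\pm=G_{\rho_0}^{\pm}(T)$, where $G_\pm$ denotes the Gramian (in the sense of \cite[Definition 1.10]{Coron07}) of the control system \eqref{chapter5mainodepm} in the $+$, respectively $-$, case. Combined with the first step, positivity of $\sigma_0(G_T)$ is therefore equivalent to having $G_+>0$ and $G_->0$ simultaneously.

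Finally I would invoke the standard equivalence between exact controllability of a finite-dimensional linear control system on $[0,T]$ and the positivity of its Gramian (see \cite{Coron07}, the same fact already used around \eqref{r1}): system \eqref{chapter5mainodepm} in the $+$ (resp.\ $-$) case is exactly controllable on $[0,T]$ if and only if $G_+>0$ (resp.\ $G_->0$). Chaining the three equivalences yields item \ref{HermnonSm} $\Longleftrightarrow$ item \ref{impliquepm}, which finishes the proof. There is essentially no obstacle left once Lemma \ref{lmresolr} is in hand; the only point deserving attention is the bookkeeping inside that lemma — the matching of the Hamiltonian flow directions $\varphi_{\mp t}$ versus $\varphi_{\pm t}$ and the adjoint relation $R_\pm(\tau,t;\rho_0)=\widetilde{R}_\pm(\tau,t;\varphi_{\pm t}(\rho_0))^*$ between the Egorov symbol resolvent and the resolvent of the ODE — but this has already been carried out, so here the proposition is immediate.
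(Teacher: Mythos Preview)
Your proposal is correct and follows exactly the paper's own argument: the paper's proof consists of one sentence invoking \eqref{egalG} from Lemma \ref{lmresolr} together with the classical equivalence between controllability and invertibility of the Gramian (\cite[Theorem 1.11]{Coron07}). Your write-up is simply a more explicit unpacking of that sentence, including the obvious observation about the block-diagonal structure of $\sigma_0(G_T)$.
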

\bnp
Thanks to \eqref{egalG}, the result is now a direct consequence of the classical equivalence between invertibility of the Gramian and controllability, {\color{blue}see \cite[Theorem 1.11]{Coron07}.}
\enp
In the next Proposition, we prove that if $a$ and $d$ have some symmetry properties, we need to check the controllability of only one system $+$ or $-$. This property will be satisfied in the two important cases
\begin{itemize}
\item $A$ is a differential operator so that $a_0$ is even and $a_1$ is odd and $a_{\pm}= \frac{1}{2}\left(a_0\pm\f{a_1}{i|\xi|_x}\right)$ (same for $d$)
\item $A=\Lambda A(x)$ where $A(x)$ is the operator of multiplication by a matrix $A(x)$, which will be the case for zero order coupling.
\end{itemize}
 \begin{prop}\lab{propodexxxx}
Assume $a_{\pm}=a_{\mp}\circ \sigma, b_{\pm}=b_{\mp}\circ \sigma,d_{\pm}=d_{\mp}\circ \sigma $,  {\color{blue}
where ~$\sigma : T^*\mathcal{M} \rightarrow T^*\mathcal{M}$ is the involution~ $(x, \xi) \mapsto (x, -\xi)$.
}
Let $T>0$, for any  $\rho_0\in S^*\mathcal{M}$~, we have the equivalence
\begin{enumerate}
\item \label{Herm}Hermitian matrix~$\sigma_0(G_{T})$ (defined in \eqref{mainGri}) is positive at $\rho_0$ and $\sigma(\rho_0)$
\item \label{implique-} System \eqref{mainmainode} is exactly controllable in the case $a_-$ and $d_-$ for $\rho_0$ and $\sigma(\rho_0)$
\item \label{implique+}System \eqref{mainmainode} is exactly controllable in the case $a_+$ and $d_+$ for $\rho_0$ and $\sigma(\rho_0)$.
\end{enumerate}
\end{prop}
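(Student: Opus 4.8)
The plan is to prove the three-way equivalence by combining the symmetry hypothesis with Proposition \ref{propodexxxxnonsym}, which already tells us (for any fixed $\rho_0$) that positivity of $\sigma_0(G_T)$ at $\rho_0$ is equivalent to exact controllability of \emph{both} the $+$ and the $-$ systems \eqref{chapter5mainodepm} with base point $\rho_0$. The key observation is that the hypothesis $a_\pm = a_\mp\circ\sigma$, $b_\pm = b_\mp\circ\sigma$, $d_\pm = d_\mp\circ\sigma$ together with the identity $\varphi_t\circ\sigma = \sigma\circ\varphi_{-t}$ (the Hamiltonian flow of $|\xi|_x$ and that of $-|\xi|_x$ are intertwined by the involution $(x,\xi)\mapsto(x,-\xi)$) forces the coefficient matrices of the $-$ system at $\rho_0$ to coincide \emph{pointwise in $t$} with those of the $+$ system at $\sigma(\rho_0)$. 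Concretely, the $-$ system at $\rho_0$ has drift $\tfrac12 a_-^*(t,\varphi_t(\rho_0))$ and control matrix $\tfrac12 d_-^*(t,\varphi_t(\rho_0))$; applying $a_-=a_+\circ\sigma$ and $\sigma(\varphi_t(\rho_0))=\varphi_{-t}(\sigma(\rho_0))$ turns these into $\tfrac12 a_+^*(t,\varphi_{-t}(\sigma(\rho_0)))$ and $\tfrac12 d_+^*(t,\varphi_{-t}(\sigma(\rho_0)))$, which are exactly the coefficients of the $+$ system \eqref{chapter5mainodepm} with base point $\sigma(\rho_0)$. Since exact controllability on $[0,T]$ depends only on these time-dependent matrices, the two control systems are exactly controllable simultaneously.

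First I would record the flow identity $\varphi_t\circ\sigma=\sigma\circ\varphi_{-t}$ on $T^*\mathcal{M}\setminus\{0\}$, noting that $\sigma$ maps $S^*\mathcal{M}$ to itself so $\sigma(\rho_0)\in S^*\mathcal{M}$ whenever $\rho_0\in S^*\mathcal{M}$; this is immediate from $H_{|\xi|_x}\circ d\sigma = -d\sigma\circ H_{|\xi|_x}$ (equivalently $\sigma^*|\xi|_x = |\xi|_x$ and $\sigma$ is an anti-symplectomorphism). Next I would substitute into the explicit ODE \eqref{chapter5mainodepm} to get the ``coefficient transfer'' identity: the $-$ control system at $\rho_0$ \emph{is} the $+$ control system at $\sigma(\rho_0)$ (and, by applying the involution once more or using $\sigma^2=\mathrm{id}$, vice versa). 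Hence ``System \eqref{mainmainode} exactly controllable in the case $a_-,d_-$ for $\rho_0$ and $\sigma(\rho_0)$'' $\iff$ ``System \eqref{mainmainode} exactly controllable in the case $a_+,d_+$ for $\sigma(\rho_0)$ and $\rho_0$'', which is the same set of two conditions; this gives \eqref{implique-}$\iff$\eqref{implique+}.

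Then I would close the loop with \eqref{Herm}. By Proposition \ref{propodexxxxnonsym} applied at $\rho_0$, positivity of $\sigma_0(G_T)$ at $\rho_0$ is equivalent to exact controllability of both $\pm$ systems at $\rho_0$; applied at $\sigma(\rho_0)$, positivity of $\sigma_0(G_T)$ at $\sigma(\rho_0)$ is equivalent to exact controllability of both $\pm$ systems at $\sigma(\rho_0)$. So \eqref{Herm} (positivity at \emph{both} $\rho_0$ and $\sigma(\rho_0)$) is equivalent to: the $+$ system is controllable at $\rho_0$ \emph{and} at $\sigma(\rho_0)$, and the $-$ system is controllable at $\rho_0$ \emph{and} at $\sigma(\rho_0)$. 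But by the coefficient-transfer identity the pair of conditions ``$-$ at $\rho_0$'' and ``$+$ at $\sigma(\rho_0)$'' are the same condition, and similarly ``$-$ at $\sigma(\rho_0)$'' $=$ ``$+$ at $\rho_0$''; so all four conditions collapse to the two conditions appearing in \eqref{implique+} (resp.\ \eqref{implique-}), yielding \eqref{Herm}$\iff$\eqref{implique+} and \eqref{Herm}$\iff$\eqref{implique-}. I should also remark that $b_\pm = a_\pm/2$ makes the hypothesis on $b$ redundant given the one on $a$, and note that the symmetry hypothesis is indeed verified in the two listed cases (differential operators: $a_0$ even, $a_1$ odd; and $A=\Lambda A(x)$: $a_0=A(x)$ even in $\xi$, $a_1=0$).

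The main obstacle is essentially bookkeeping: one must be scrupulous about the direction of the flow ($\varphi_{t}$ versus $\varphi_{\mp t}$ versus $\varphi_{\pm t}$) in \eqref{chapter5mainodepm}, \eqref{main-r} and \eqref{mainGri}, and about which base point ($\rho_0$ or $\sigma(\rho_0)$) each symbol is evaluated at, so that the substitution $\sigma\circ\varphi_t = \varphi_{-t}\circ\sigma$ is applied with the correct sign. There is no analytic difficulty beyond Cauchy--Lipschitz; the only genuine input is the anti-symplectic nature of $\sigma$ and the already-proven Proposition \ref{propodexxxxnonsym}. A careful reader might worry whether one needs $\sigma_0(G_T)$ positivity at $\sigma(\rho_0)$ separately — the answer is yes, which is exactly why statement \eqref{Herm} is phrased with ``at $\rho_0$ and $\sigma(\rho_0)$'', and the proof must make that explicit rather than silently assuming $\sigma$-invariance of the positivity locus.
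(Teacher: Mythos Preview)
Your proposal is correct and follows essentially the same route as the paper. The paper packages the key step as a separate lemma establishing the Gramian identity $G_{\sigma(\rho_0)}^{+}(T)=G_{\rho_0}^{-}(T)$ via the resolvent equation \eqref{main-r}, whereas you go one level up and observe directly that the $-$ system at $\rho_0$ and the $+$ system at $\sigma(\rho_0)$ have identical time-dependent coefficients (hence identical Gramians and controllability); both arguments rest on the same flow identity $\sigma\circ\varphi_t=\varphi_{-t}\circ\sigma$ and the symmetry hypothesis, and then invoke Proposition \ref{propodexxxxnonsym} to close the equivalence.
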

The proof is direct with Proposition \ref{propodexxxxnonsym} and the following Lemma at hand.
\begin{lem}
\label{lmegalG}
With the symmetry assumptions of Proposition \ref{propodexxxx}, we have
\bna
G_{\sigma(\rho_0)}^{+}(T)=G_{\rho_0}^{-}(T).
\ena
\end{lem}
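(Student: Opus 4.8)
The plan is to exploit the fact that the involution $\sigma:(x,\xi)\mapsto(x,-\xi)$ intertwines the Hamiltonian flow of $|\xi|_x$ with its time reversal. Since $|\xi|_x$ is even in $\xi$ and $\sigma$ is anti-symplectic ($\sigma^{*}\omega=-\omega$), one checks that $\sigma_{*}H_{|\xi|_x}=-H_{|\xi|_x}$, hence $\sigma\circ\varphi_t=\varphi_{-t}\circ\sigma$, that is
\[
\varphi_{-t}(\sigma(\rho))=\sigma(\varphi_t(\rho)),\qquad \varphi_s(\sigma(\rho))=\sigma(\varphi_{-s}(\rho)).
\]
Recording this identity is the first step; it is the only geometric input, and I expect it to be the only point that is not pure bookkeeping (and it is itself classical).

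Next, starting from formula \eqref{mainGri} for $G_{\sigma(\rho_0)}^{+}(T)$, I would replace the base point $\varphi_{-t}(\sigma(\rho_0))$ everywhere by $\sigma(\varphi_t(\rho_0))$ using the identity above. The heart of the argument is then to show that
\[
R_+(0,t;\sigma(\varphi_t(\rho_0)))=R_-(0,t;\varphi_t(\rho_0)),
\]
which I would prove by Cauchy--Lipschitz uniqueness. Writing out the ODE \eqref{main-r} satisfied in the variable $\tau$ by $R_+(\tau,t;\sigma(\varphi_t(\rho_0)))$, the base point $\varphi_{t-\tau}(\sigma(\varphi_t(\rho_0)))$ appearing inside $b_+$ simplifies, again via $\varphi_s\circ\sigma=\sigma\circ\varphi_{-s}$, to $\sigma(\varphi_\tau(\rho_0))$; and the symmetry hypothesis $b_+=b_-\circ\sigma$ together with $\sigma^2=\mathrm{id}$ turns $b_+(\tau,\sigma(\varphi_\tau(\rho_0)))$ into $b_-(\tau,\varphi_\tau(\rho_0))$. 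Hence $\tau\mapsto R_+(\tau,t;\sigma(\varphi_t(\rho_0)))$ solves exactly the same linear ODE in $\tau$ as $\tau\mapsto R_-(\tau,t;\varphi_t(\rho_0))$ does by \eqref{main-r}, with the same value $Id_{N\times N}$ at $\tau=t$; so the two matrix functions coincide, and in particular agree at $\tau=0$.

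Finally I would use the symmetry hypothesis $d_+=d_-\circ\sigma$ (once more with $\sigma^2=\mathrm{id}$) to rewrite $d_+(t,\varphi_{-t}(\sigma(\rho_0)))=d_+(t,\sigma(\varphi_t(\rho_0)))=d_-(t,\varphi_t(\rho_0))$, and substitute everything back into \eqref{mainGri}: the integrand becomes verbatim that of $G_{\rho_0}^{-}(T)$, yielding $G_{\sigma(\rho_0)}^{+}(T)=G_{\rho_0}^{-}(T)$. The only things to watch are that composition with $\sigma$ always acts on the phase-space base point alone, never on the matrix structure (so no transpose or conjugate is ever introduced), and that the time-direction conventions relating $\varphi_t^{\mp}$ and $\varphi_{\mp t}$ used in \eqref{mainGri} and \eqref{main-r} are tracked consistently; beyond this there is no analytic obstacle.
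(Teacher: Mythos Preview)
Your proposal is correct and follows essentially the same route as the paper: the flow-intertwining identity $\varphi_{-t}\circ\sigma=\sigma\circ\varphi_t$ (which the paper cites from \cite[Lemma B.2]{DehmanLeLeautaud14} rather than deriving), then the symmetry hypotheses on $b_\pm$ and $d_\pm$ to reduce $R_+$ at the $\sigma$-reflected base point to $R_-$ by Cauchy--Lipschitz, and finally substitution into \eqref{mainGri}. The only cosmetic difference is that the paper first proves the clean identity $R_+(\tau,t;\sigma(\rho_0))=R_-(\tau,t;\rho_0)$ for arbitrary $\rho_0$ and then evaluates at $\varphi_t(\rho_0)$, whereas you work directly at the shifted base point; the content is identical.
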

\bnp
By using \cite[Lemma B.2]{DehmanLeLeautaud14}, we have
\be\lab{equivalent-sigma}
(\varphi_{t}(\rho_0))\circ \sigma =\sigma \circ (\varphi_{-t}(\rho_0)), \quad
(\varphi_{-t}(\rho_0))\circ \sigma =\sigma \circ (\varphi_{t}(\rho_0)).
\ee
We have, first by \eqref{equivalent-sigma}, then by the symmetry property of $d$,
\be
d_{+}(t,\varphi_{- t}(\sigma(\rho_0)))=d_{+}(t,\sigma\circ \varphi_{ t}(\rho_0))=d_{-}(t,\varphi_{ t}(\rho_0))
\ee
and the same holds for the transpose. For the same reasons, we have
\be
b_+(\tau,\varphi^{-}_{\tau-t}(\sigma(\rho_0)))=b_+(\tau,\sigma\circ \varphi^{+}_{\tau-t}(\rho_0))=b_-(\tau,\varphi^{+}_{\tau-t}(\rho_0)),
\ee
In particular, we have by \eqref{main-r}
\be%\displaystyle
\begin{split}
\f{dR_+(\tau,t;\sigma(\rho_0))}{d\tau}=R_+(\tau,t;\sigma(\rho_0))b_+(\tau,\varphi^{-}_{\tau-t}(\sigma(\rho_0)))=R_+(\tau,t;\sigma(\rho_0))b_-(\tau,\varphi^{+}_{\tau-t}(\rho_0))
\end{split}
\ee
which is the equation satisfied by $R_-(\tau,t;\rho_0)$ with same initial data, so that $R_+(\tau,t;\sigma(\rho_0))=R_-(\tau,t;\rho_0)$. \eqref{equivalent-sigma} and this symmetry of $R$ give
\be
R_+(0,t;\varphi^{-}_{t}(\sigma(\rho_0)))=R_+(0,t;\sigma\circ \varphi^{+}_{t}(\rho_0))=R_-(0,t; \varphi^{+}_{t}(\rho_0)).
\ee
This gives exactly the expected result $G_{\sigma(\rho_0)}^{+}(T)=G_{\rho_0}^{-}(T)$.
 \enp
\subsection{Proof of Theorem \ref{mainthm-w}}\lab{subsectionproofThm}
We actually plan to prove the slightly more general theorem.
Let
\be
a_0\in C^\infty(\R;S_{phg}^0(T^*\mathcal{M};\C^{N\times N})),a_1\in C^\infty(\R;S_{phg}^1(T^*\mathcal{M};\C^{N\times N})),
\ee
are, respectively, the principal symbols of $A_0$ and $A_1$ which are defined in \eqref{L}.
Let
\be
d_0\in C^\infty(\R;S_{phg}^0(T^*\mathcal{M};\C^{K\times N})),d_1\in C^\infty(\R;S_{phg}^1(T^*\mathcal{M};\C^{K\times N}))
\ee
are, respectively, the principal symbols of $D_0$ and $D_1$ which are defined in \eqref{D}.
\begin{thm}
\lab{mainthmPseudo}
Solutions of System \eqref{mainwave1.1} satisfy weak observability inequality \eqref{mainobsinw} on $[0,T]$ if and only if for any $\rho_0\in S^*\mathcal{M}$, any initial data $X_0\in \C^N$, both systems \eqref{chapter5mainodepm} are exactly controllable on $[0,T]$.
 \end{thm}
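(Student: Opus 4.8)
The plan is to combine the pseudodifferential representation of the Gramian operator (Theorem~\ref{thmHUM}), the $N\times N$ Sharp G{\aa}rding Inequality, and the algebraic dictionary between positivity of $\sigma_0(G_T)$ and controllability of the systems \eqref{chapter5mainodepm} (Proposition~\ref{propodexxxxnonsym}). I first reformulate the statement. Since $\Sigma$ is (almost) an isometry, equality \eqref{halfwaveeq} holds, and since $\tilde{\mathcal R}_T$ maps $(H^{\sigma})^{2N}$ to $(H^{\sigma+1})^{2N}$ one has $|(\tilde{\mathcal R}_T w,w)_{(L^2)^{2N}}|\le C\|w\|_{(H^{-1/2})^{2N}}^2$; hence, writing $w=\Sigma V_\ast$ and using that $\Sigma$ is onto and that every $(V_0,V_1)$ yields a genuine solution of \eqref{mainwave1.1}, Theorem~\ref{thmHUM} shows that Weak Observability Inequality \eqref{mainobsinw} is equivalent to the existence of $\varepsilon>0$ and $C>0$ with $(G_T w,w)_{(L^2)^{2N}}\ge \varepsilon\|w\|_{(L^2)^{2N}}^2-C\|w\|_{(H^{-1/2})^{2N}}^2$ for all $w\in (L^2)^{2N}$.

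\emph{Sufficiency.} If both systems \eqref{chapter5mainodepm} are exactly controllable for every $\rho_0\in S^*\mathcal M$, then by Proposition~\ref{propodexxxxnonsym} the Hermitian matrix $\sigma_0(G_T)(\rho_0)$ is positive definite for every $\rho_0\in S^*\mathcal M$. Its smallest eigenvalue depends continuously on $\rho_0$, and $S^*\mathcal M$ is compact, so the constant $\mathcal R_2(T)$ of \eqref{RR} is strictly positive; Proposition~\ref{main-prop-two} then gives \eqref{mainobsinw} (with $C_{obs}^2(T)\ge \frac{1}{2\mathcal R_2(T)}$).

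\emph{Necessity.} Suppose \eqref{mainobsinw} holds; by the reformulation above, $(G_T w,w)\ge \varepsilon\|w\|^2-C\|w\|_{(H^{-1/2})^{2N}}^2$ for all $w$. I claim $\sigma_0(G_T)(\rho_0)>0$ for every $\rho_0\in S^*\mathcal M$, which by Proposition~\ref{propodexxxxnonsym} is precisely the exact controllability of both systems \eqref{chapter5mainodepm}. Argue by contradiction: since $\sigma_0(G_T)=\mathrm{diag}\bigl(G^+_{\rho_0}(T),G^-_{\rho_0}(T)\bigr)$ with both blocks Hermitian and nonnegative (they are Gramians), if positivity fails there are $\rho_0\in S^*\mathcal M$, a sign $\pm$, and a unit vector $\beta_0\in\C^N$ with $\beta_0^*G^{\pm}_{\rho_0}(T)\beta_0=0$; put $\gamma_0:=(\beta_0,0)$ or $(0,\beta_0)\in\C^{2N}$ accordingly, so that $\gamma_0^*\sigma_0(G_T)(\rho_0)\gamma_0=0$. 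In a chart around the base point of $\rho_0$, let $(v_k)$ be a normalized coherent-state (wave-packet) sequence at scale $h_k\to 0$ microlocalized at $\rho_0$, set $w_k:=\gamma_0 v_k$, and let $V^k$ be the solution of \eqref{mainwave1.1} with $(V_0^k,V_1^k)=\Sigma^{-1}w_k$. Then $\|w_k\|_{(L^2)^{2N}}=1$, $\|w_k\|_{(H^{-1/2})^{2N}}\to 0$, and by the standard symbolic behaviour of wave packets applied to the scalar order-zero operators $(G_T)_{ij}$ one gets $(G_T w_k,w_k)_{(L^2)^{2N}}\to \gamma_0^*\sigma_0(G_T)(\rho_0)\gamma_0=0$. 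Inserting $w=w_k$ into $(G_T w,w)\ge \varepsilon\|w\|^2-C\|w\|_{(H^{-1/2})^{2N}}^2$ forces $o(1)\ge \varepsilon-o(1)$, a contradiction. Hence $\sigma_0(G_T)>0$ on $S^*\mathcal M$ and necessity follows.

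\emph{Main obstacle.} The delicate step is the wave-packet limit in the necessity part: one must verify that a coherent state microlocalized at $\rho_0\in S^*\mathcal M$ detects the matrix-valued operator $G_T$ only through the value of its principal symbol at $\rho_0$, and that all lower-order contributions — the remainder $\tilde{\mathcal R}_T$, the fact that $\Sigma$ is only approximately an isometry, and the subprincipal part of $G_T$ — are $o(\|w_k\|_{(L^2)^{2N}}^2)$. This is classical (it can be phrased via semiclassical/microlocal defect measures or by direct local-chart estimates, exactly as in the scalar arguments of Dehman--Lebeau and Laurent--L\'eautaud), but it is where the microlocal bookkeeping sits; the block-diagonal structure of $\sigma_0(G_T)$ is precisely what allows one to excite a single half-wave component and a single polarization direction in order to witness the loss of positivity.
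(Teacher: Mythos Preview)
Your proof is correct and follows essentially the same route as the paper: sufficiency via Proposition~\ref{main-prop-two} and Proposition~\ref{propodexxxxnonsym}, necessity by contradiction using a coherent-state sequence microlocalized at a point $\rho_0$ where $\sigma_0(G_T)$ fails to be positive. The only cosmetic difference is that you first reformulate weak observability as a lower bound on $(G_Tw,w)$ before running the wave-packet argument, whereas the paper applies Theorem~\ref{thmHUM} directly to the observation integral; the explicit coherent state the paper uses, $w^k(y)=C_0k^{n/4}e^{ik\varphi(y)}\psi(y)$ with $\varphi(y)=y\cdot\eta_0+i(y-y_0)^2$, is exactly the object you invoke abstractly.
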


\begin{proof}
Step 1.
By Proposition \ref{main-prop-two} and Proposition \ref{propodexxxxnonsym}, it is easy to show that
\noindent{Systems \eqref{chapter5mainodepm} are exactly controllable implies weak observability inequality \eqref{mainobsinw} for all solution of System \eqref{mainwave1.1}.}\\
Step 2. We check that
\noindent{ System \eqref{mainwave1.1} satisfy weak observability inequality \eqref{mainobsinw} implies System \eqref{chapter5mainodepm} are exactly controllable.  }
Suppose by contradiction that this part of the Theorem failed, then
 there exists a $\rho_0\in S^*\mathcal{M}$~ and Hamiltonian flow~$\varphi_t(\rho_0)$~ such that one of the System \eqref{chapter5mainodepm} is not controllable, let us say $-$ for fixing the ideas.
Hence ~$G_{\rho_0}(T)$~ is  nonpositive. According to Proposition \ref{propodexxxxnonsym}, we have
\be
\det\left(G_{\rho}^-(T)\right)=0,
\ee
Then there exists a vector ~$P\in \C^N, |P|_{l^2(\C^N)}=1$~such that  ~$P^{*}G_{\rho}^-(T)P=0$~.
 We take a local chart ~$x_0\in(U_\kappa, k)$ of~$M$ so that $g_{i,j}(x_{0)}=Id$. We denote by ~$(y_0 , \eta_0)$ the coordinates of ~$\rho_0$~ in this chart.
We choose  $\psi \in \mathcal{C}^\infty_c(\R^n)$ such that $supp(\psi) \subset
\kappa(U_\kappa)$, and $\psi = 1$ in a neighborhood of~ $y_0$~. Next we define
$$
w^k (y) = C_0 k^{\frac{n}{4}} e^{i k \varphi(y)}\psi(y), \quad
\varphi(y) = y \cdot \eta_0 + i(y-y_0)^2,~~  C_0 >0 .
$$
Setting now
\begin{align}
\label{eq: sequence concentrating}
v_-^k =\kappa^* w^k \in \mathcal{C}^\infty_c( \mathcal{M}),
\end{align}
We have $v_-^k \rightharpoonup 0$ and $\lim_{k \rightarrow
  \infty} \|v_-^k\|_{L^2} =\lim_{k \rightarrow
  \infty} \|Pv_-^k\|_{(L^2)^{N}}= 1$
for an appropriate choice of~$C_0$,
while $\lim_{k \rightarrow
  \infty} \|v_-^k\|_{H^{-1}} = 0$. Moreover, a classical computation on~$(w^k)_{k \in \N}$ show that
  for all  $ A \in \Psi_{phg}^0(\mathcal{M}; \C^{N\times N})$, ~$(v_-^k)_{k \in \N}$ satisfies
\begin{equation}
\label{e:Amdm}
\left(A Pv_-^k ,  Pv_-^k \right)_{(L^{2})^N} \rightarrow \sigma_{0}(P^{*}AP)(\rho_0), ~k\rightarrow \infty  .
\end{equation}
Next, we set ~$v_+^k = 0, ~~k \in \N$, and $V^k = \Sigma^{-1} ( 0 ,  Pv_-^k)  \in (H^1)^N \times( L^{2})^N$.
Denoting $\mathcal{V}^k(t)$ the solution to System \eqref{mainwave1.1} with initial data  $V^k$, Theorem \ref{thmHUM} and~\eqref{e:Amdm} gives
\begin{equation}\lab{critical}
\begin{split}
\lim_{k \rightarrow \infty}
\int_0^T \|D \mathcal{V}^k (t)\|_{(L^2)^K}^2 dt
&= \lim_{k \rightarrow \infty}
 \big((G_T + \widetilde{R}_T) \Sigma V^k, \Sigma V^k \big)_{(L^2)^{2N}} \\
 &= \lim_{k \rightarrow \infty}
 \big( G_T  \Sigma V^k, \Sigma V^k \big)_{(L^2)^{2N}} \\
& = P^{*}G_{\rho}^-(T) P = 0 ,
\end{split}
\end{equation}
where we used that ~$R_T$ is 1-smoothing, that ~ $G_T \in \Psi_{phg}^0(\mathcal{M};\C^{2N\times 2N})$ has principal symbol given by ~\eqref{xxxxxx}, and the choice of ~$\rho=\rho_0$ in ~\eqref{xxxxxx} .
Then we obtain a contradiction and finish the proof of Theorem \ref{mainthmPseudo}.
\enp
\bnp[Proof of Theorem \ref{mainthm-w}]
Since $A$ and $D$ are differential operators, Proposition \ref{propodexxxx} applies and the conclusion is direct from Theorem \ref{mainthmPseudo}.
\enp
\subsection{Proof of Theorem \ref{theoremoptimalcons}}
\label{subsectproofthmbis}
\bnp[Proof of Theorem \ref{theoremoptimalcons}]
By Theorem \ref{mainthm-w},
 $G_{\rho_0}(T)$ is positive for any $\rho_0\in S^*\mathcal{M}$.
 Hence ~$T>T_{crit}=\inf\limits_{T_0}\{T_0|\min\limits_{\rho_0\in S^*\mathcal{M},|\beta|=1}\beta^{*} G_{\rho_0}(T_0)\beta>0\}$~.

Using Proposition \ref{propodexxxx}, we have
\be
\mathfrak{K}(T)=\mathcal{R}_2(T).
\ee

Proposition \ref{main-prop-two} then gives that the weak observability holds with \be
C^2_{obs}(T)\geq \f{1}{2\mathfrak{K}(T)}.
\ee
Since $\mathcal{M}$ is a compact manifold, it suffices to show that there exists a $\rho_0\in S^*\mathcal{M}$, such that
$\sigma(G_T)-\mathcal{R}_2(T)Id_{2N\times 2N}$ is a nonpositive matrix.
In view of the proof of Theorem \ref{mainthmPseudo}, it is easy to obtain
\be
 C^2_{obs}(T) \mathcal{R}_2(T)  =  C^2_{obs}(T)  \int_0^T \|D \mathcal{V}^k(t) \|_{(L^2)^K}^2 dt \geq \sum_k\|V_k\|^2_{L^2} \rightarrow 1.
\ee

So we finish the proof of Theorem \ref{theoremoptimalcons} thanks to  Corollary \ref{maincorfc} is a direct consequence of  \cite[Theorem 3.2]{XQX2005} and Theorem \ref{theoremoptimalcons}.
\enp
\section{Coupling of order zero}
\label{sectzero}
In this Section, we plan to prove that the involved systems are well-posed and prove Theorem \ref{thmcontrolmulti}. In a first Section, we will also describe that the assumption of the matrix being in a subdiagonal form is actually quite general, up to some change of unknown.
\subsection{Getting the subdiagonal form}
\label{sectiongettindiag}
\subsubsection{Getting the subdiagonal form in the constant coupling case}
We use the Brunovsky normal form as described in Proposition \ref{propBrunovsky}. This gives the immediate Lemma.

\begin{lem}
\label{lmequivBrunov}
Assume that $(A,B)$ are constant matrices and satisfy the Kalman rank condition. Let $(\widetilde{A},\widetilde{B})$, $\Mx$, $F$, $M_{u}$ given by Proposition \ref{propBrunovsky}. Define also the space varying matrix $\widetilde{A}_{\omega}(x)=\widetilde{A}_{\chi_{\omega}(x)}= \Mx^{-1}(A\Mx+\chi_{\omega}(x)BF)$ where $\chi_{\omega}=1$ on $\omega$. Then, if $\widetilde{U}$ is solution of
\be
   \label{systemBrunov}
        \begin{cases}
            \pa_t^2 \widetilde{U}-\Delta_g \widetilde{U}+\widetilde{A}_{\omega}\widetilde{U}=\chi_{\omega}(x)\widetilde{B}\widetilde{G}, \\
            (\widetilde{U}(0),\widetilde{U}_t(0))=(\widetilde{U}_0,\widetilde{U}_1).
    \end{cases}
 \ee
 then, $U=\Mx\widetilde{U}$ is solution of the following system with control $G=-F\widetilde{U}+M_{u}\widetilde{G}$
 \be
        \begin{cases}
   \label{systemoriginal}
            \pa_t^2 U-\Delta_g U+AU=\chi_{\omega}(x)BG, \\
            (U(0),\partial_t U(0))=\Mx(\widetilde{U}_0,\widetilde{U}_1).
    \end{cases}
\ee
In particular, if the System \eqref{systemBrunov} is controllable in some space $\mathcal{E}=\mathcal{E}_{1}\times \mathcal{E}_{0}$ satisfying $\mathcal{E}_{1}\subset (L^2)^N$ with control $\widetilde{G}\in L^2(0,T;L^2)^K)$, then the System \eqref{systemoriginal} is controllable in $\mathcal{E}_{\Mx}=\Mx \mathcal{E}$ on $[0,T]$ with control $G\in L^2(0,T;L^2)^K)$.

Moreover, we have the following properties concerning the coupling matrix.
\begin{enumerate}
\setlength\topsep{-0.2em}
\setlength\itemsep{-0.2em}
\item \label{itemsubdiag} for any $x\in \mathcal{M}$,  has a subdiagonal form as described in Proposition \ref{propBrunovsky}.
\item \label{itemKalman}for $x\in \omega$, $\widetilde{A}_{\omega}(x)=\widetilde{A}$  so that $(\widetilde{A}_{sub},\widetilde{B})=(\widetilde{A},\widetilde{B})$ satisfies the Kalman rank condition, where the index sub means that we only keep the subdiagonal terms as in \eqref{Asub}.
\end{enumerate}
\end{lem}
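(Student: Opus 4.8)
The plan is to reduce everything to the Brunovsky normal form recalled in Proposition \ref{propBrunovsky}, and then to check that the change of unknown $U = \Mx \widetilde U$ together with the feedback law $G = -F\widetilde U + M_u \widetilde G$ intertwines the two wave systems. First I would write out Proposition \ref{propBrunovsky}: it provides an invertible $\Mx \in GL_N(\R)$, a feedback matrix $F$, an invertible control change $M_u$, and matrices $(\widetilde A,\widetilde B) = (\Mx^{-1}(A\Mx + BF M_u^{-1}\cdot), \Mx^{-1} B M_u)$ (written in whichever normalization the paper uses) such that $\widetilde A$ is in the subdiagonal block form of \eqref{structA} and $\widetilde B$ has the form \eqref{structB}. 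The substitution is then purely algebraic: starting from \eqref{systemBrunov}, multiply on the left by $\Mx$, use $\Mx \Delta_g = \Delta_g \Mx$ since $\Mx$ is constant (the Laplace--Beltrami operator acts componentwise), and insert $\widetilde U = \Mx^{-1} U$. The coupling term becomes $\Mx \widetilde A_\omega \Mx^{-1} U = (A + \chi_\omega(x) BF)U$ by the very definition of $\widetilde A_\omega(x) = \Mx^{-1}(A\Mx + \chi_\omega(x)BF)$, and the source term becomes $\chi_\omega(x)\Mx\widetilde B\widetilde G = \chi_\omega(x) B M_u \widetilde G$. Collecting terms, $U$ solves \eqref{systemoriginal} with $G = -F\widetilde U + M_u\widetilde G = -F\Mx^{-1}U + M_u \widetilde G$, which is exactly the announced feedback. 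The initial data transform by $\Mx$ as stated because $\widetilde U = \Mx^{-1}U$ at $t=0$.

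For the controllability transfer statement, I would argue as follows. Suppose \eqref{systemBrunov} is controllable in $\mathcal E = \mathcal E_1 \times \mathcal E_0$ with $\mathcal E_1 \subset (L^2)^N$ and control $\widetilde G \in L^2(0,T;(L^2)^K)$. Given a target and initial state in $\Mx\mathcal E$, pull them back by $\Mx^{-1}$ to states in $\mathcal E$, solve the control problem for \eqref{systemBrunov} to get $\widetilde G \in L^2(0,T;(L^2)^K)$, and push forward. The only point requiring care is that the resulting $G = -F\Mx^{-1}U + M_u\widetilde G$ is genuinely in $L^2(0,T;(L^2)^K)$: the term $M_u\widetilde G$ is in $L^2(0,T;(L^2)^K)$ since $M_u$ is a constant matrix, and the feedback term $-F\Mx^{-1}U$ is in $C([0,T];\mathcal E_1) \subset C([0,T];(L^2)^N)$ because the solution $\widetilde U$ (hence $U$) stays in the energy space, and $F\Mx^{-1}$ is bounded on $(L^2)^N$; so $G \in L^2(0,T;(L^2)^K)$ after composing with the (bounded, constant) projection to the $K$-dimensional control space. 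This is the step where one must use $\mathcal E_1 \subset (L^2)^N$ — it guarantees the feedback term does not lose regularity below $L^2$ in $x$.

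Finally, properties \eqref{itemsubdiag} and \eqref{itemKalman} are read off directly. Property \eqref{itemsubdiag}: for every $x$, $\widetilde A_\omega(x) = \widetilde A + \chi_\omega(x)\Mx^{-1}BF$; since $\widetilde A$ is subdiagonal by block and $\Mx^{-1}BF = \widetilde B F$ has nonzero rows only in the first block-row (because $\widetilde B$ has the form \eqref{structB}), adding it cannot destroy the subdiagonal structure — it only modifies the first block-row, which is unconstrained in \eqref{structA}. Here I should double-check the precise placement in the Brunovsky form used by Proposition \ref{propBrunovsky}; this is the one spot where the bookkeeping of which block-row $\widetilde B$ and $F$ touch must be matched exactly to the normalization in that Proposition, and is the main (minor) obstacle. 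Property \eqref{itemKalman}: for $x\in\omega$, $\chi_\omega(x)=1$, so $\widetilde A_\omega(x) = \widetilde A + \widetilde B F$; but the Brunovsky form is precisely designed so that $\widetilde A + \widetilde B F$ (with $F$ the compensating feedback) equals the clean normal form $\widetilde A$ — i.e., $\widetilde B F$ kills exactly the extra terms — hence $\widetilde A_\omega(x) = \widetilde A$ on $\omega$, its subdiagonal part is $\widetilde A_{sub}$, and $(\widetilde A_{sub},\widetilde B) = (\widetilde A,\widetilde B)$ satisfies the Kalman rank condition because it is a (block) chain of integrators, which is controllable. That $(A,B)$ Kalman implies the Brunovsky form exists with these properties is exactly the content of Proposition \ref{propBrunovsky}, so nothing further is needed.
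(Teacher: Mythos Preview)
Your overall strategy is correct and is essentially the paper's: a direct algebraic intertwining, then invoking Proposition~\ref{propBrunovsky} for the structural properties. The paper's own proof is three lines of the same computation plus the sentence ``direct consequences of Proposition~\ref{propBrunovsky}.''

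That said, several of your intermediate formulas are off, and one of them makes your argument for item~\ref{itemKalman} circuitous. First, $\Mx\widetilde A_\omega \Mx^{-1}=A+\chi_\omega BF\Mx^{-1}$, not $A+\chi_\omega BF$; this is harmless because the term that lands in the control is $BF\widetilde U$, not $BFU$, and your final expression $G=-F\widetilde U+M_u\widetilde G$ is correct. Second, $\Mx^{-1}B=\widetilde B M_u^{-1}$, not $\widetilde B$; again harmless for the block-row argument since $\widetilde B M_u^{-1}$ still has nonzero rows only in the first block. Third, and more importantly, your decomposition $\widetilde A_\omega(x)=\widetilde A+\chi_\omega(x)\Mx^{-1}BF$ is wrong: by definition $\widetilde A=\Mx^{-1}(A\Mx+BF)$ already contains the $BF$ term, so in fact $\widetilde A_\omega(x)=\widetilde A+(\chi_\omega(x)-1)\Mx^{-1}BF$. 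This makes item~\ref{itemKalman} immediate: when $\chi_\omega(x)=1$ you get $\widetilde A_\omega(x)=\widetilde A$ \emph{by definition}, with no ``$\widetilde BF$ kills the extra terms'' story needed. For item~\ref{itemsubdiag}, the clean route (which the paper takes) is simply to note that $\widetilde A_\omega(x)=\widetilde A_{t}$ with $t=\chi_\omega(x)$, and Proposition~\ref{propBrunovsky} already states that $\widetilde A_t$ has the required subdiagonal-by-block form for every $t\in\R$. Your own argument via the first block-row of $\widetilde B$ is fine once the slips above are corrected, but it duplicates work already packaged in that proposition.

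Your treatment of the controllability transfer, including the explicit check that $G\in L^2(0,T;(L^2)^K)$ using $\mathcal E_1\subset (L^2)^N$, is more detailed than the paper's one-word ``direct'' and is a welcome clarification.
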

\bnp This is just a direct computation, denote $\Box =  \pa_t^2 -\Delta_g$, we have
\bna
\Box U+AU&=& \Mx\Box \widetilde{U}+A \Mx\widetilde{U}=-\Mx \widetilde{A}_{\omega}\widetilde{U}+\chi_{\omega}(x)\Mx\widetilde{B}\widetilde{G}+A \Mx\widetilde{U}\\
&=&-(A\Mx+\chi_{\omega}(x)BF)\widetilde{U}+\chi_{\omega}(x)B M_{u}\widetilde{G}+A \Mx\widetilde{U}\\
&=&\chi_{\omega}(x)B(-F\widetilde{U}+M_{u}\widetilde{G})=\chi_{\omega}(x)BG.
\ena
The second property about the link between the controllability of each equation is direct. The properties of the coupling matrix are then a direct consequences of Proposition \ref{propBrunovsky}.\enp
\subsubsection{An algorithm to obtain a natural subdiagonal form}
In this section, we describe one natural (informal) algorithm as following when considering the control System
\be
    \label{wavezero}
        \begin{cases}
            \pa_t^2 U-\Delta_g U+AU=BG, \\
            (U(0),\partial_t U(0))=(U_0,U_1).
    \end{cases}
    \text{with control $G\in L^2(0,T;L^2)^K)$}
\ee
where $A(x)$ is a matrix in $\R^{N\times N}$ and  $B(x)$ is a matrix in $\R^{N\times K}$.
\medskip

 We start from the subspace of $\R^N$ that might be reached directly by the control $G$ (without using the coupling). Namely, we define $E_1\subset \R^N$ as $
E_1= Vect \left\{\cup_{x\in \mathcal{M}} Range(B(x))\right\}$. This set of state variables might be controlled in $H^1\times L^2$.

 Next, we define the subspace that might be controlled from $E_1$ (if we can control in all $E_1$) through the coupling. This makes us to define naturally $E_2= Vect \left\{\cup_{x\in \mathcal{M}} A(x)(E_1)\right\}$.  This set of state variables might be controlled from a source term in $C([0,T];H^1)$ so, we expect to control the states at least in $H^2(\mathcal{M})\times H^1$ (but may be better for direction that are in $E_1\cap E_2$).

Again, we want to define the subspace that might be controlled from $E_2$ (if we can control in all $E_2$) through the coupling. The natural new space that could be reached is $E_3= Vect \left\{\cup_{x\in \mathcal{M}} A(x)(E_2)\right\}$. This time, the new source term is in $C(0,T;H^2(\mathcal{M}))$. Thus, we expect to control the states in $H^3(\mathcal{M})\times H^2(\mathcal{M})$.

\medskip

So, this leads to the definition of subspaces of $E_i$ by iteration:
\be
\small
E_1= Vect \left\{\cup_{x\in \mathcal{M}} Range(B(x))\right\}; \quad 
E_{i+1}= Vect \left\{\cup_{x\in \mathcal{M}} A(x)(E_i)\right\}.
\ee
$H_k=Vect_{i=1}^k E_i$ is clearly an increasing sequence of subspaces of $\R^N$  that is stationary after some steps that we call $k$. Morever, it satisfies the important property $A(x)(H_i)\subset H_{i+1}$ for any $x\in \mathcal{M}$. It could happen that the bigger space $H_k$ is not equal to $\R^N$, but it is easy to see that the wave system is not controllable in this case. Indeed, for any control $B(x)G(t,x)\in H_k$ since $H_k$ contains $Vect \left\{\cup_{x\in \mathcal{M}} Range(B(x))\right\}$, and $A(x)(H_k)\subset H_{k}$, so for any initial data in $H_k$, the solution remains in $H_k$.

We can then assume now that we can decompose $\R^N= \oplus_{i=1}^k F_i$ with $F_i\cap F_j= \left\{0\right\}$ if $i\neq j$ and $\oplus_{i=1}^n F_i= H_n$. In particular, in a basis according to $F_i$, $A(x),B(x)$ can be written as a matrix "subdiagonal by block" as \eqref{structA} and \eqref{structB}.

Note that in the case $A(x)=A$ and $B(x)=B$, we have $E_1=Range(B)$ and $E_i=Range(A^{i-1}B)$, so that this decomposition is related to the Kalman rank condition and the Brunovsky normal form described in Proposition \ref{propBrunovsky}.
\subsection{Wellposedness in multilevel spaces}
Up to now and in the next Section, we assume that $A(x)$ and $B(x)$ have the form described in Theorem \ref{thmcontrolmulti}.

The natural space for solutions of \eqref{wavezero} is then then space $\mathcal{H}^s$ as follows.
$U\in \mathcal{H}^s$ if for every $i=1,...,k$, we have $U^i \in (H^{s+i-1})^{d_i}$ where $d_i$ is the dimension of $F_i$. That is
\be\lab{zero-Hs-space}
\mathcal{H}^s= (H^{s})^{d_1}\times (H^{s+1})^{d_2}\times \cdots \times H^{s+k-1}(\mathcal{M})^{d_k}.
\ee
The natural energy space is then $\mathcal{E}=\mathcal{H}^1\times \mathcal{H}^0$ and we will prove (see Theorem \ref{thmwellposedmulti}) that the equation
\be
    \label{controlzerointro}
        \begin{cases}
            \pa_t^2 U-\Delta_g U+AU=BG, \\
            (U(0),\partial_t U(0))=(U_0,U_1).
    \end{cases}
\ee
is well posed in $\mathcal{E}$ with source term $G\in L^2(0,T;L^2)^K)$.

Now, it appears that the important terms are the subdiagonal terms of $A$ as \eqref{structAtilde}.
\be \small
\label{structAtilde}
%\[
A_{sub}(x)=\begin{bmatrix}
    0 &  \dots  &\dots  &0 \\
    A_{21} &  \dots  &\dots  & 0 \\
    \vdots &  \ddots &\ddots  & \vdots \\
    0 & 0& A_{k,k-1}  & 0
\end{bmatrix}
\ee

Note that in the previous result, the high frequency problem and the unique continuation problem, the matrix involved is not the same. We have
\be\small
\label{decompA}
%\[
A(x)=A_{sub}+A_r=\begin{bmatrix}
    0 & \dots  &\dots  &0 \\
    A_{21} &  \dots  &\dots  & 0 \\
    \vdots &  \ddots &\ddots  & \vdots \\
    0 & 0& A_{k,k-1}  & 0
\end{bmatrix}+\begin{bmatrix}
    A_{11} & A_{12}   &\dots  & A_{1k} \\
   0 & A_{22}   &\dots  & A_{2k} \\
    \vdots & \vdots & \ddots  & \vdots \\
    0 & 0& 0   & A_{kk}
\end{bmatrix}
\ee

 The structure "subdiagonal by block " of $A$ allows to prove the following Lemma.
\begin{lem}
\label{lmAhstordu}
For any $s\in \R$, the multiplication by \begin{itemize}
\setlength\itemsep{-0.2em}
\item $A(x)$ sends $\mathcal{H}^s$ into $\mathcal{H}^{s-1}$
\item $A_{sub}(x)$ sends $\mathcal{H}^s$ into $\mathcal{H}^{s-1}$
\item $A_r(x)$ sends $\mathcal{H}^s$ into $\mathcal{H}^{s}$.
\end{itemize}
\end{lem}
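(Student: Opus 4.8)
The plan is to verify each of the three bullet points directly from the block-subdiagonal structure, reducing everything to the elementary fact that multiplication by a smooth matrix-valued function $M(x)$ maps $(H^{a})\to (H^{b})$ whenever $b\le a$, and that it actually preserves the Sobolev index, i.e. maps $(H^a)\to(H^a)$. Recall the decomposition $\mathcal{H}^s= (H^{s})^{d_1}\times (H^{s+1})^{d_2}\times \cdots \times (H^{s+k-1})^{d_k}$, so a vector $U=(U^1,\dots,U^k)$ with $U^i\in (H^{s+i-1})^{d_i}$; the target space $\mathcal{H}^{s-1}$ requires the $i$-th block to land in $(H^{s-2+i})^{d_i}$ and $\mathcal{H}^{s}$ requires it to land in $(H^{s-1+i})^{d_i}$.

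First I would treat $A_r(x)$. Since $A_r$ is upper triangular by block (with $(i,j)$-block $A_{ij}$ nonzero only for $j\ge i$), the $i$-th block of $A_r U$ is $\sum_{j\ge i} A_{ij}(x)U^j$. Each $U^j$ with $j\ge i$ lies in $(H^{s+j-1})^{d_j}\subset (H^{s+i-1})^{d_j}$ by the Sobolev embedding $H^{s+j-1}\hookrightarrow H^{s+i-1}$ (as $j\ge i$), and multiplication by the smooth matrix $A_{ij}(x)$ preserves $H^{s+i-1}$. Hence the $i$-th block of $A_rU$ lies in $(H^{s+i-1})^{d_i}$, which is exactly the $i$-th slot of $\mathcal{H}^s$; therefore $A_r$ maps $\mathcal{H}^s$ into $\mathcal{H}^s$.

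Next I would treat $A_{sub}(x)$, whose only nonzero blocks are the $(i,i-1)$-blocks $A_{i,i-1}$. The $i$-th block of $A_{sub}U$ is $A_{i,i-1}(x)U^{i-1}$, and $U^{i-1}\in (H^{s+i-2})^{d_{i-1}}$, so after multiplication by the smooth matrix we get a vector in $(H^{s+i-2})^{d_i}=(H^{(s-1)+i-1})^{d_i}$, which is precisely the $i$-th slot of $\mathcal{H}^{s-1}$. (For $i=1$ the image block is $0$, which trivially lies in the required space.) Hence $A_{sub}$ maps $\mathcal{H}^s$ into $\mathcal{H}^{s-1}$. Finally, since $A(x)=A_{sub}(x)+A_r(x)$ and $\mathcal{H}^{s}\subset \mathcal{H}^{s-1}$ (again by Sobolev embedding in each slot), the first bullet follows by adding the two previous results: $A$ maps $\mathcal{H}^s$ into $\mathcal{H}^{s-1}+\mathcal{H}^{s}=\mathcal{H}^{s-1}$. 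There is no real obstacle here; the only thing to be careful about is that the shift of regularity by one in $A_{sub}$ is exactly absorbed by the built-in shift in the definition of the spaces $\mathcal{H}^s$, and that one must invoke both the index-preserving property and the embedding property of multiplication operators at the right places. One should also note that these statements use only the continuity of multiplication by smooth $\mathcal{C}^\infty(\mathcal{M})$ functions on each $H^\sigma(\mathcal{M})$, which holds on a compact manifold for all $\sigma\in\R$.
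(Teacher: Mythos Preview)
Your proof is correct and is exactly the computation the paper has in mind: the paper does not write out an explicit proof of this lemma, merely noting that ``the structure `subdiagonal by block' of $A$ allows to prove the following Lemma,'' and your block-by-block verification is precisely the intended argument.
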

\begin{lem}
\label{lmexistsource}
Let $(U_0,U_1)\in \mathcal{E}$ and $H\in L^1([0,T];\mathcal{H}^0)$. Then, there exists a unique solution $(U,\partial_t U)\in C([-T,T],\mathcal{E})$ to
\be
    \label{eqsource }
        \begin{cases}
            \pa_t^2 U-\Delta_g U=H, \\
            (U(0),\partial_t U(0))=(U_0,U_1).
    \end{cases}
\ee
is well-posed in $(U,\partial_t U)\in C([-T,T],\mathcal{E})$ for $(U_0,U_1)\in \mathcal{E}$ and $H\in L^1([0,T];\mathcal{H}^0)$
\end{lem}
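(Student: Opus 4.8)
The statement is the well-posedness of the \emph{uncoupled} (diagonal) wave system with a source term, but measured in the anisotropic energy scale $\mathcal{E}=\mathcal{H}^1\times\mathcal{H}^0$. Since the equation $\partial_t^2 U-\Delta_g U=H$ decouples into $N$ independent scalar wave equations $\partial_t^2 U_j-\Delta_g U_j=H_j$, the whole statement reduces to the classical scalar theory applied componentwise, and then bookkeeping of the regularity index along the block structure \eqref{zero-Hs-spaceintro}. So the first thing I would do is recall the standard fact: for each $s\in\R$, the scalar Cauchy problem $\partial_t^2 w-\Delta_g w=h$ with $(w(0),\partial_t w(0))\in H^s\times H^{s-1}$ and $h\in L^1([0,T];H^{s-1})$ has a unique solution $(w,\partial_t w)\in C([-T,T];H^s\times H^{s-1})$, with the Duhamel formula
\be
w(t)=\cos(t\Lambda_0)w(0)+\frac{\sin(t\Lambda_0)}{\Lambda_0}\partial_t w(0)+\int_0^t\frac{\sin((t-\tau)\Lambda_0)}{\Lambda_0}h(\tau)\,d\tau,
\ee
where $\Lambda_0=(-\Delta_g)^{1/2}$ (or just use $\Lambda=(-\Delta_g+1)^{1/2}$ and absorb the zero-order shift). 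Continuity in time and the energy estimate $\|(w,\partial_t w)\|_{L^\infty([-T,T];H^s\times H^{s-1})}\lesssim \|(w(0),\partial_t w(0))\|_{H^s\times H^{s-1}}+\|h\|_{L^1([0,T];H^{s-1})}$ are standard (spectral calculus on the Hilbert basis $(e_j)$, or the semigroup framework of Pazy \cite{Pazy83} already invoked in the paper).

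The second step is to read off what this means block by block. By definition \eqref{zero-Hs-spaceintro}, $(U_0,U_1)\in\mathcal{E}=\mathcal{H}^1\times\mathcal{H}^0$ means $U_0^i\in (H^{i})^{d_i}$ and $U_1^i\in (H^{i-1})^{d_i}$ for $i=1,\dots,k$; and $H\in L^1([0,T];\mathcal{H}^0)$ means $H^i\in L^1([0,T];(H^{i-1})^{d_i})$. Applying the scalar result with $s=i$ to each of the $d_i$ scalar components inside the $i$-th block gives $(U^i,\partial_t U^i)\in C([-T,T];(H^{i})^{d_i}\times(H^{i-1})^{d_i})$, which is exactly $(U,\partial_t U)\in C([-T,T];\mathcal{H}^1\times\mathcal{H}^0)=C([-T,T];\mathcal{E})$. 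Uniqueness is inherited componentwise from the scalar uniqueness. I would present this as: collect the componentwise solutions, note they assemble into a single $\mathcal{E}$-valued continuous curve because $\mathcal{E}$ is precisely the product of the blockwise spaces, and the norm is equivalent to the max of the blockwise norms.

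There is essentially no hard part here — the only thing to be slightly careful about is the matching of indices: the source term regularity required in block $i$ is $H^{i-1}$, which is exactly one derivative below the solution regularity $H^{i}$, and this is consistent with $H\in L^1([0,T];\mathcal{H}^0)$ since $(\mathcal{H}^0)^i=(H^{i-1})^{d_i}$. In other words, the shifted scale $\mathcal{H}^s$ has been \emph{defined} so that the diagonal wave operator maps $\mathcal{H}^s\times\mathcal{H}^{s-1}$-data plus $L^1_t\mathcal{H}^{s-1}$-source to $C_t(\mathcal{H}^s\times\mathcal{H}^{s-1})$, just as in the unshifted case, because the same integer shift is applied to the data, the source, and the conclusion simultaneously. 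So the lemma is really a reformulation of the classical scalar statement, and the proof is: decouple, apply the scalar theory with $s=i$ in block $i$, reassemble. The closest thing to an obstacle is simply making sure the reader sees that $\mathcal{H}^0$ (as the target space of the source) lines up with the $H^{i-1}$ needed in block $i$ — a one-line remark suffices.
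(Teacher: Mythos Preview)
Your proposal is correct and follows exactly the paper's approach: since the wave operator is diagonal, the system decouples into scalar wave equations, and one applies the standard scalar well-posedness theory in $H^{i}\times H^{i-1}$ (with $L^1_t H^{i-1}$ source) to each of the $d_i$ components in the $i$-th block, then reassembles. The paper's proof is in fact shorter than yours, stating only the reduction to the block-wise scalar problem and invoking its classical well-posedness.
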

\bnp
Since the wave operator is diagonal, we can reduce the problem to each component where the Theorem reduces to the property that the equation
\be
\nonumber
        \begin{cases}
            \pa_t^2 V_i-\Delta_g V_i=H_i, \\
            (V_i(0),\partial_t V_i(0))=(U_{0,i},U_{1,i})
    \end{cases}
\ee
is well-posed in $C([-T,T],(H^{i}(\mathcal{M}))^{d_i}\cap C^1([0,T];(H^{i-1}(\mathcal{M}))^{d_i}) $ with source term in $H_i\in L^1([0,T];(H^{i-1}(\mathcal{M}))^{d_i})$ and $(U_{0,i},U_{1,i})\in (H^{i}(\mathcal{M})\times H^{i-1}(\mathcal{M}))^{d_i}$.
\enp
\begin{thm}
\label{thmwellposedmulti}
Let $(U_0,U_1)\in \mathcal{E}$ and $G\in L^1([0,T];(L^2)^K)$. Then, there exists a unique solution $(U,\partial_t U)\in C([-T,T],\mathcal{E})$ to the equation
\be
\nonumber
        \begin{cases}
            \pa_t^2 U-\Delta_g U+AU=BG, \\
            (U(0),\partial_t U(0))=(U_0,U_1).
    \end{cases}
\ee
\end{thm}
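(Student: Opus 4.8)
The plan is to view $AU$ and $BG$ as a source term and solve by a Picard iteration on a short time interval, which is then propagated to $[-T,T]$ by the usual iteration argument. Everything rests on the fact, already encoded in the scale $\mathcal{H}^s$, that the coupling loses exactly one derivative \emph{consistently with the block grading}, so that it can be fed into the decoupled wave problem of Lemma \ref{lmexistsource}.

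First I would record the two mapping properties that are needed. By Lemma \ref{lmAhstordu}, multiplication by $A(x)$ is a bounded operator $\mathcal{H}^1\to\mathcal{H}^0$; write $\|A\|$ for its norm. By the structure \eqref{structB}, $B(x)=[B_{11},0,\dots,0]^{tr}$ with $B_{11}\in\R^{d_1\times K}$, so for $G\in L^1([0,T];(L^2)^K)$ all blocks of $BG$ vanish except the first, which lies in $L^1([0,T];(L^2)^{d_1})$; since $\mathcal{H}^0=(H^0)^{d_1}\times(H^1)^{d_2}\times\cdots$, this is precisely the statement $BG\in L^1([0,T];\mathcal{H}^0)$, with $\|BG\|_{L^1_t\mathcal{H}^0}\le\|B_{11}\|_{L^\infty}\|G\|_{L^1_t(L^2)^K}$. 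Hence, as soon as $U\in C([0,T];\mathcal{H}^1)$, the right-hand side $BG-AU$ is an admissible source for Lemma \ref{lmexistsource}.

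Next, fix $\tau\in(0,T]$ and, on the Banach space $X_\tau:=C([0,\tau];\mathcal{E})$, define the map $\Phi$ by $\Phi(U,V):=(W,\partial_tW)$, where $W$ is the solution furnished by Lemma \ref{lmexistsource} of $\partial_t^2W-\Delta_gW=BG-AU$ with $(W(0),\partial_tW(0))=(U_0,U_1)$ (only the first component of the argument is used). The solution map in Lemma \ref{lmexistsource} is linear and, by well-posedness, bounded, with an energy estimate
\[
\sup_{t\in[0,\tau]}\|(W,\partial_tW)(t)\|_{\mathcal{E}}\ \le\ C_0\Big(\|(U_0,U_1)\|_{\mathcal{E}}+\int_0^\tau\|(BG-AU)(s)\|_{\mathcal{H}^0}\,ds\Big),
\]
the constant $C_0$ being independent of $\tau\in(0,T]$. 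In particular $\Phi$ maps $X_\tau$ into itself, and since $\Phi(U_1,V_1)-\Phi(U_2,V_2)$ solves the free wave equation with zero Cauchy data and source $-A(U_1-U_2)$,
\[
\|\Phi(U_1,V_1)-\Phi(U_2,V_2)\|_{X_\tau}\ \le\ C_0\|A\|\,\tau\,\|(U_1,V_1)-(U_2,V_2)\|_{X_\tau}.
\]
Choosing $\tau=\tau_0$ so that $C_0\|A\|\tau_0=\tfrac12$, the map $\Phi$ is a contraction and the Banach fixed point theorem yields a unique $(U,\partial_tU)\in C([0,\tau_0];\mathcal{E})$ solving the equation on $[0,\tau_0]$.

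Finally, since $\tau_0$ depends only on $C_0$ and $\|A\|$ and not on the data, I would iterate this on $[\tau_0,2\tau_0]$ with Cauchy data $(U(\tau_0),\partial_tU(\tau_0))\in\mathcal{E}$, and so on; after finitely many steps $[0,T]$ is covered, and the same argument run backward covers $[-T,0]$. Gluing the pieces, which match at the junctions by construction, gives $(U,\partial_tU)\in C([-T,T];\mathcal{E})$; uniqueness holds on each subinterval because the difference of two solutions solves the homogeneous problem with zero data and is therefore the zero fixed point. I do not expect a genuine obstacle here: the only substantive inputs are Lemmas \ref{lmAhstordu} and \ref{lmexistsource}, and the single point one must be careful about is that the coupling has to enter Lemma \ref{lmexistsource} at the regularity level $\mathcal{H}^0$, which is exactly what the block-subdiagonal form guarantees. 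Alternatively, the whole statement can be obtained from the bounded perturbation theorem for $C_0$-groups applied to the free wave group on $\mathcal{E}$ (the case $H\equiv 0$ of Lemma \ref{lmexistsource}), together with the variation-of-constants formula with an $L^1$-in-time source.
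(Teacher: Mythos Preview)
Your proof is correct and follows the same approach as the paper. The paper's own proof is a one-liner (``direct with Lemma \ref{lmAhstordu} and Lemma \ref{lmexistsource}''), and you have simply spelled out the standard fixed-point argument that is implicit in that sentence, using precisely those two lemmas and the observation that $BG\in L^1([0,T];\mathcal{H}^0)$ by the block structure of $B$.
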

\bnp
The proof is direct with Lemma \ref{lmAhstordu} and Lemma \ref{lmexistsource}. The source term $BG$ is in $L^1([0,T];\mathcal{H}^0)$ because of the specific structure of $B$ in \eqref{structB}.
\enp
\subsection{Reduction of the control problem}
In this Section, we will reduce the control problem, which is now with a coupling of subdiagonal form as in Section \ref{sectzeroorder}, to a coupling of order $1$. This will lead to a proof of Theorem \ref{thmcontrolmulti}

At this stage, we notice that the matrix $A_r$ defined in \eqref{decompA} is compact for this scale of spaces. Now, it is natural to define the following operator
\be\small
T=\begin{bmatrix}
    Id  & 0 & \dots  & 0 \\
    0& \Lambda & \dots  & 0 \\
    \vdots & \vdots & \ddots  & \vdots \\
    0 & 0& 0  & \Lambda^{k-1}
\end{bmatrix}
\ee
$T$ is a natural isometry from $\mathcal{H}^s$ to $(H^s)^N$.
We will need also the matricial operator $P_A$ defined by (roughly the action of $P_A$ on each subspace is described by $(P_A)_{i,j}=\Lambda^{i-1}A_{i,j}\Lambda^{-(j-1)})$
\be \small
\nonumber
%\[
P_A=TA T^{-1}=\begin{bmatrix}
    A_{11} & A_{12}\Lambda^{-1} & A_{13}\Lambda^{-2} & \dots  &\dots  & A_{1k}\Lambda^{-(k-1)} \\
    \Lambda A_{21} & \Lambda A_{22}\Lambda^{-1} & \Lambda A_{23}\Lambda^{-2} & \dots  &\dots  & \Lambda A_{2k}\Lambda^{-(k-1)} \\
		0 &\Lambda^2 A_{32}\Lambda^{-1} &\Lambda^2 A_{33}\Lambda^{-2} & \dots  &\dots  &\Lambda^2 A_{3k}\Lambda^{-(k-1)} \\
%		0 & 0 & \Lambda^3 A_{43}\Lambda^{-2} & \ddots  &\dots  & A_{4k}\Lambda^{-(k-1)} \\
    \vdots & \vdots & \vdots & \ddots &\ddots  & \vdots \\
    0 & 0& 0 & 0  &\Lambda^{k-1}A_{k,k-1}\Lambda^{-(k-2)}  & \Lambda^{k-1}A_{kk}\Lambda^{-(k-1)}
\end{bmatrix}
\ee
Therefore, we have the immediate property.
\begin{lem}
\label{lmsymbolePA}
$P_A$ is a pseudodifferential operator of order $1$ of principal symbol $\lambda(x,\xi) A_{sub}(x) $.
\end{lem}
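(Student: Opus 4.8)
The claim to prove is Lemma \ref{lmsymbolePA}: that $P_A = TAT^{-1}$ is a pseudodifferential operator of order $1$ whose principal symbol is $\lambda(x,\xi)A_{\mathrm{sub}}(x)$, where $\lambda(x,\xi) = |\xi|_x$ is the principal symbol of $\Lambda = (-\Delta_g+1)^{1/2}$.

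\textbf{Approach.} The plan is to read off the structure of $P_A$ block by block from the explicit matrix expression already displayed for $TAT^{-1}$, and to analyze the order and principal symbol of each entry $\Lambda^{i-1}A_{ij}\Lambda^{-(j-1)}$ using the composition calculus for classical pseudodifferential operators together with the fact that each $A_{ij}(x)$ is multiplication by a smooth matrix-valued function, hence a pseudodifferential operator of order $0$ with principal symbol $A_{ij}(x)$ (independent of $\xi$). The key observation is that the $(i,j)$ block is a composition of three classical $\Psi$DOs of orders $i-1$, $0$ and $-(j-1)$ respectively, hence lies in $\Psi^{i-j}_{phg}$, and its principal symbol is the product of the three principal symbols, namely $\lambda(x,\xi)^{i-1}\, A_{ij}(x)\, \lambda(x,\xi)^{-(j-1)} = \lambda(x,\xi)^{i-j} A_{ij}(x)$.

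\textbf{Key steps.} First I would record that $\Lambda \in \Psi^1_{phg}(\mathcal{M})$ with principal symbol $\lambda(x,\xi)=|\xi|_x$, so $\Lambda^{m}\in\Psi^m_{phg}(\mathcal{M})$ with principal symbol $\lambda^m$ for any integer $m$ (positive or negative), and that multiplication by $A_{ij}(x)\in C^\infty(\mathcal{M};\R^{d_i\times d_j})$ is in $\Psi^0_{phg}$ with principal symbol $A_{ij}(x)$. Second, using the multiplicativity of principal symbols under composition, the $(i,j)$ entry $(P_A)_{ij}=\Lambda^{i-1}A_{ij}\Lambda^{-(j-1)}$ is in $\Psi^{i-j}_{phg}$ with principal symbol $\lambda^{i-j}A_{ij}(x)$. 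Third, since $A$ is subdiagonal by block, $A_{ij}=0$ for $i<j$, so all nonzero entries have $i\geq j$, giving order $i-j\geq 0$; the diagonal blocks ($i=j$) have order $0$, the first subdiagonal ($i=j+1$) has order $1$, and all other blocks have order $0$ or negative — hence $P_A\in\Psi^1_{phg}(\mathcal{M};\C^{N\times N})$ as a whole. Finally, to identify the principal symbol of $P_A$ \emph{as an operator of order one}, I would note that only the order-$1$ contributions survive: these come exactly from the first subdiagonal blocks $i=j+1$, where the principal symbol is $\lambda(x,\xi)A_{j+1,j}(x)$; assembling these blocks reproduces precisely $\lambda(x,\xi)A_{\mathrm{sub}}(x)$, since $A_{\mathrm{sub}}$ is by definition \eqref{structAtilde} the matrix whose only nonzero blocks are the first subdiagonal ones $A_{j+1,j}$.

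\textbf{Main obstacle.} There is essentially no deep obstacle here — the lemma is ``immediate'' as the paper states — so the only care needed is bookkeeping: being precise that the principal symbol of a sum of $\Psi$DOs of different orders, viewed in the top order class, only sees the top-order summands (the lower-order blocks contribute to lower-order terms in the symbol expansion but not to $\sigma_1$), and checking the index arithmetic $\lambda^{i-1}\cdot\lambda^{-(j-1)}=\lambda^{i-j}$ so that the first subdiagonal indeed yields homogeneity degree exactly $1$ in $\xi$. One should also remark that negative powers of $\Lambda$ are well-defined classical $\Psi$DOs because $-\Delta_g+1$ is a positive elliptic operator on the compact manifold $\mathcal{M}$, so $\Lambda$ is invertible and $\Lambda^{-(j-1)}\in\Psi^{-(j-1)}_{phg}$; this is what makes the conjugation $TAT^{-1}$ a legitimate operation within the pseudodifferential calculus.
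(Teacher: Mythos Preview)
Your approach is exactly what the paper has in mind (it simply calls the property ``immediate''), but there is a slip in your description of the block structure of $A$. You write ``since $A$ is subdiagonal by block, $A_{ij}=0$ for $i<j$''. That is not what \eqref{structA} and the decomposition \eqref{decompA} say: the upper-triangular blocks $A_{ij}$ for $i\le j$ are in general \emph{nonzero} (they form $A_r$); the vanishing condition imposed by ``subdiagonal by block'' is $A_{ij}=0$ for $i>j+1$. Taken literally, your condition $A_{ij}=0$ for $i<j$ would allow nonzero blocks for all $i\ge j$, and then $\Lambda^{i-1}A_{ij}\Lambda^{-(j-1)}\in\Psi^{i-j}$ would have arbitrarily high order, so $P_A$ would not lie in $\Psi^1$ at all.

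The fix is immediate once the correct structure is used: with $A_{ij}=0$ for $i>j+1$, every nonzero $(i,j)$ block of $P_A$ has order $i-j\le 1$, with equality precisely on the first subdiagonal $i=j+1$; the diagonal blocks have order $0$ and the upper-triangular ones ($i<j$) have strictly negative order. Hence $P_A\in\Psi^1_{phg}$ and only the first-subdiagonal blocks contribute to $\sigma_1(P_A)$, giving $\lambda(x,\xi)A_{\mathrm{sub}}(x)$ as you conclude. Your final sentence ``all other blocks have order $0$ or negative'' is in fact the right statement, but it follows from $i-j\le 1$, not from the inequality you wrote.
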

Also, the following Lemma is immediate noting that $TB=B$.
\begin{lem}
\label{lmchangeunknown}
Let $(U_0,U_1)\in \mathcal{E}$ and $G\in L^2([0,T];(L^2)^K)$. Then, the following statements are equivalent
\begin{enumerate}
\item $(U,\partial_t U)\in C([-T,T],\mathcal{E})$ is solution to the equation
\be
    \nonumber
        \begin{cases}
            \pa_t^2 U-\Delta_g U+AU=BG, \\
            (U(0),\partial_t U(0))=(U_0,U_1).
    \end{cases}
\ee
\item $(V,\partial_t V)=T(U,\partial_t U)\in C([-T,T],(H^1\times L^2))^N)$ is solution to the equation
\be
    \nonumber
        \begin{cases}
            \pa_t^2 V-\Delta_g V+P_A V=BG, \\
            (V(0),V_t(0))=T(U_0,U_1).
    \end{cases}
\ee
\item $(W,\partial_t W)=\Lambda T(U,\partial_t U)\in C([-T,T],(L^2)\times H^{-1})^N)$ is solution to the equation
\be
    \label{eqnW}
        \begin{cases}
            \pa_t^2 W-\Delta_g W+\Lambda P_A \Lambda^{-1}W=\Lambda B G, \\
            (W(0),W_t(0))=\Lambda T(U_0,U_1).
    \end{cases}
\ee
\end{enumerate}
\end{lem}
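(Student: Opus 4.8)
The plan is to exploit that the two changes of unknown $T$ and $\Lambda$ are time-independent Fourier multipliers (functions of $-\Delta_g$), so they commute with the wave operator $\partial_t^2-\Delta_g$ acting diagonally on $(\cdot)^N$, and conjugating the equation becomes a purely algebraic manipulation of the zero order term and the source. The two identities that do the work are $P_A=TAT^{-1}$, which is the very definition of $P_A$, and $TB=B$, which holds because $B$ has the block structure \eqref{structB} (only its first block-row is nonzero) while the first diagonal block of $T$ is the identity. On the level of spaces, $T:\mathcal{H}^s\to (H^s)^N$ is an isomorphism for every $s$ (an isometry up to equivalent norms) and $\Lambda:(H^s)^N\to(H^{s-1})^N$ is an isomorphism; these are what transport the continuity-in-time statements between the three scales.

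For $(1)\Leftrightarrow(2)$: suppose $(U,\partial_t U)\in C([-T,T],\mathcal{E})$ solves $\partial_t^2U-\Delta_gU+AU=BG$ with data $(U_0,U_1)$. Since $T$ is bounded from $\mathcal{H}^s$ to $(H^s)^N$ for all $s$, $V:=TU$ satisfies $(V,\partial_tV)\in C([-T,T],(H^1)^N\times(L^2)^N)$. Applying $T$ to the equation, using $T\partial_t^2=\partial_t^2T$ and $T\Delta_g=\Delta_gT$ (the latter because $T$ is built from powers of $\Lambda$, which shares the eigenbasis of $\Delta_g$), one gets $\partial_t^2V-\Delta_gV+TAU=TBG$; then $TAU=(TAT^{-1})V=P_AV$ and $TBG=BG$ give exactly the equation of $(2)$, with data $T(U_0,U_1)$. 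The reverse implication is the same computation applied with $T^{-1}$, which is a multiplier with the same commuting properties, and uniqueness of the solution (Theorem \ref{thmwellposedmulti}) makes the two correspondences mutually inverse. Statement $(2)\Leftrightarrow(3)$ is identical with $\Lambda$ in the role of $T$: $\Lambda$ commutes with $\partial_t^2-\Delta_g$ and is an isomorphism $(H^s)^N\to(H^{s-1})^N$, so $W:=\Lambda V$ lies in $C([-T,T],(L^2)^N\times(H^{-1})^N)$ and satisfies $\partial_t^2W-\Delta_gW+\Lambda P_AV=\Lambda BG$; rewriting $\Lambda P_AV=(\Lambda P_A\Lambda^{-1})W$ yields \eqref{eqnW} with data $\Lambda T(U_0,U_1)$.

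The only point requiring a little care is the functional-analytic meaning of the word ``solution'' at the low regularity involved (notably $W$ at the level $(L^2)^N\times(H^{-1})^N$): each equation should be read in the mild/Duhamel sense furnished by Lemma \ref{lmexistsource} and Theorem \ref{thmwellposedmulti}, and one checks that $T$, $T^{-1}$, $\Lambda$, $\Lambda^{-1}$, being continuous between the relevant Sobolev scales, commute with the Duhamel formula and hence map mild solutions to mild solutions. I do not anticipate a genuine obstacle: the whole content of the Lemma is this bookkeeping of conjugations, the substantive inputs ($P_A=TAT^{-1}$, $TB=B$, and the mapping properties recorded in Lemmas \ref{lmsymbolePA} and \ref{lmAhstordu}) being already in place. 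The payoff, to be used next, is that $(3)$ is a coupling problem of order $1$ with principal symbol $\lambda(x,\xi)A_{sub}(x)$, to which Theorem \ref{mainthm-w} applies.
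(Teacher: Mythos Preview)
Your proposal is correct and matches the paper's approach exactly: the paper simply declares the Lemma ``immediate noting that $TB=B$'', and your argument spells out precisely the conjugation computation behind this (commutation of $T$ and $\Lambda$ with the wave operator, $P_A=TAT^{-1}$ by definition, and $TB=B$ from the block form \eqref{structB}).
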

\begin{prop}[HUM]\label{propequivzero}The following statements are equivalent
\begin{enumerate}
\item \label{itemcontrol}The  problem \eqref{eqnW} is controllable in $(L^2\times H^{-1})^N$ with control $G\in L^2([0,T];L^2)^K)$
\item \label{itemobserv} We have the observability estimate
\be
   \nonumber
    C^1_{obs} \int_0^T\|B^* \Lambda W\|^2_{(L^2)^K}dt \geq \mathbb{E}_0(W_0,W_1),
\ee
for any solution to
\be
 \nonumber
        \begin{cases}
            \pa_t^2 W-\Delta_g W+\Lambda^{-1} P_A^* \Lambda W=0, \\
            (W(0),W_t(0))=(W_0,W_1).
    \end{cases}
\ee
\item \label{itemtruc1}\eqref{mainmainode1} is controllable and for any $\lambda\in \C$, any solution $W\in (H^1)^N$ of
\be
\nonumber
    \begin{cases}
     -\Delta_g W+\lambda^2 W+ \Lambda^{-1} P_A^* \Lambda W=0,\\
     B^* \Lambda W=0,
    \end{cases}
\ee
is $V=0$.
\item \label{itemtruc2}\eqref{mainmainode1} is controllable and for any $\lambda\in \C$, any solution $U\in (H^1)^N$ of
\be
\nonumber
    \begin{cases}
     -\Delta_g U+\lambda^2 U+ A^*  U=0,\\
      B^* U=0,
    \end{cases}
\ee
is $V=0$.
\end{enumerate}
\end{prop}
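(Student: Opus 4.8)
The plan is to establish the cycle of equivalences \eqref{itemcontrol} $\Leftrightarrow$ \eqref{itemobserv} $\Leftrightarrow$ \eqref{itemtruc1} $\Leftrightarrow$ \eqref{itemtruc2}. Step~1 (\eqref{itemcontrol} $\Leftrightarrow$ \eqref{itemobserv}) is pure Hilbert Uniqueness duality: by Lemma \ref{lmsymbolePA} the coupling $\Lambda P_A\Lambda^{-1}$ in \eqref{eqnW} is a pseudodifferential operator of order one, so \eqref{eqnW} is a well-posed control problem in $(L^2)^N\times(H^{-1})^N$ with the $L^2$-in-time control $G$ entering through the order-zero operator $\Lambda B$; its $L^2$-adjoint coupling is $(\Lambda P_A\Lambda^{-1})^*=\Lambda^{-1}P_A^*\Lambda$ and $(\Lambda B)^*=B^*\Lambda$. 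The classical equivalence between exact controllability of an inhomogeneous wave-type system in a Hilbert space and exact observability of its homogeneous adjoint in the dual space (Lions; see also \cite[Theorem~3.2]{XQX2005} and \cite{DuprezOlive18}) then gives that \eqref{eqnW} is controllable in $(L^2)^N\times(H^{-1})^N$ if and only if the observability estimate of \eqref{itemobserv} holds, its right-hand side $\mathbb{E}_0$ being exactly the energy of the adjoint equation in the dual space $(H^1)^N\times(L^2)^N$.

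Step~2 (\eqref{itemobserv} $\Leftrightarrow$ \eqref{itemtruc1}) is the heart of the proof. The adjoint equation in \eqref{itemobserv} is of the form \eqref{mainwave1.1} with the time-independent, pseudodifferential data $A_0=0$, $A_1=\Lambda^{-1}P_A^*\Lambda$, $D_0=0$, $D_1=B^*\Lambda$. Because conjugation by $\Lambda$ preserves the homogeneous principal symbol, Lemma \ref{lmsymbolePA} gives $a_1(x,\xi)=|\xi|_x A_{sub}^*(x)$ and $d_1(x,\xi)=|\xi|_x B^*(x)$, so in the notation of \eqref{chapter5mainodepm} one has $a_\pm=\mp i A_{sub}^*(x)$ and $d_\pm=\mp i B^*(x)$. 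Since the inequality in \eqref{itemobserv} is an \emph{exact} (not merely weak) observability estimate and the coefficients are time independent, it is equivalent, by Theorem \ref{mainthmPseudo} together with the compactness--uniqueness argument that upgrades weak to exact observability under unique continuation of eigenfunctions (the pseudodifferential counterpart of Theorem \ref{mainthmstrong}, see also \cite[Theorem~4.1]{DuprezOlive18}), to the conjunction of: (i) Property \ref{propUCPeign} for the data $(0,\Lambda^{-1}P_A^*\Lambda,0,B^*\Lambda)$, which is exactly the unique continuation statement in \eqref{itemtruc1}; and (ii) exact controllability on $[0,T]$, for every $\rho_0\in S^*\mathcal{M}$, of \emph{both} systems \eqref{chapter5mainodepm}, which with the above symbols read $\dot X=c\bigl(A_{sub}(\varphi_{\mp t}(\rho_0))X+B(\varphi_{\mp t}(\rho_0))u\bigr)$, $X_0\in\C^N$, $u\in L^2(0,T;(\C)^K)$, with $c=\pm\frac{i}{2}$. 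It then remains to see that, quantified over all $\rho_0$, controllability of these two complex systems is equivalent to that of the single real system \eqref{mainmainode1}. This follows from three elementary reductions: (a) complexifying a real linear control system does not change its controllability, so we may pass freely between $\C$ and $\R$; (b) since $A_{sub}$ and $B$ have the block-subdiagonal form \eqref{structAtilde}--\eqref{structB}, products of $k$ or more matrices $A_{sub}(\cdot)$ vanish and $B$ feeds only the top block, so in the Duhamel formula the $j$-th block acquires exactly a factor $c^{j}$; hence the reachable set of $\dot X=c(A_{sub}(\varphi_{\cdot}(\rho_0))X+B(\varphi_{\cdot}(\rho_0))u)$ is the image of that of the $c=1$ system under an invertible block-diagonal rescaling, and controllability is insensitive to $c\in\C\setminus\{0\}$; (c) reversing time $t\mapsto T-t$ and using the group law of the Hamiltonian flow shows that the ``$+$'' system at $\rho_0$ is controllable iff the ``$-$'' system at $\varphi_{-T}(\rho_0)$ is, so -- quantifying over all $\rho_0\in S^*\mathcal{M}$ -- it suffices to treat one of the two families. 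Combining (a)--(c) identifies the pair of conditions \eqref{chapter5mainodepm} for all $\rho_0$ with \eqref{mainmainode1} for all $\rho_0$.

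Step~3 (\eqref{itemtruc1} $\Leftrightarrow$ \eqref{itemtruc2}) matches the two unique continuation conditions, the controllability of \eqref{mainmainode1} being common to both. Given $W\in(H^1)^N$ with $-\Delta_g W+\lambda^2 W+\Lambda^{-1}P_A^*\Lambda W=0$, an elliptic bootstrap gives $W\in C^\infty(\mathcal{M})^N$; put $U:=\Lambda T W$. Since $\Lambda$, the block-diagonal operator $T$ and scalars all commute with $-\Delta_g$, multiplying the equation on the left by $\Lambda T$ and using $T P_A^*=A^*T$ (from $P_A=T A T^{-1}$ and $T^*=T$) yields $-\Delta_g U+\lambda^2 U+A^*U=0$. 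Moreover the special form \eqref{structB} of $B$ gives $B^*T=B^*$, and $B^*$ commutes with $\Lambda$, so $B^*U=\Lambda B^*W$ and $B^*U=0\Leftrightarrow B^*\Lambda W=0$. As $\Lambda T$ is an automorphism of $C^\infty(\mathcal{M})^N$, the two unique continuation conditions are equivalent (after the harmless renaming $\lambda^2\leftrightarrow\lambda$), which closes the cycle and proves the Proposition.

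The step I expect to be the main obstacle is Step~2: one must check that the main observability theorem of Section \ref{sectionproof}, and especially the passage from weak to exact observability through unique continuation, genuinely apply to a pseudodifferential (not merely differential) and $\Lambda$-conjugated coupling, and -- above all -- that the clean real control system \eqref{mainmainode1} is correctly recovered from the two complex systems \eqref{chapter5mainodepm} that the theorem outputs, which hinges on the complexification remark, on the flow group law, and on the nilpotent block structure of $A_{sub}$ rendering the spurious scalar factor $c=\pm i/2$ invisible to controllability.
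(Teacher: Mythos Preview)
Your proof is correct and follows the same three-step skeleton as the paper (HUM for \ref{itemcontrol}$\Leftrightarrow$\ref{itemobserv}; the main observability theorem plus eigenfunction unique continuation for \ref{itemobserv}$\Leftrightarrow$\ref{itemtruc1}; undoing the change of unknown $U=\Lambda T W$ for \ref{itemtruc1}$\Leftrightarrow$\ref{itemtruc2}). Two small slips: $\Lambda B$ is a first-order (not order-zero) operator, and $B^*$ does not literally commute with $\Lambda$ when $B$ depends on $x$; the correct route is $B^*U=B^*\Lambda T W=B^*T\Lambda W=B^*\Lambda W$ (using that $T$ commutes with $\Lambda$ and $B^*T=B^*$), which gives the desired equivalence directly without that commutation.

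The substantive difference lies in Step~2. The paper simply invokes the pseudodifferential version of Theorem~\ref{mainthmstrong} and appeals to Proposition~\ref{propodexxxx} ``using that $\lambda(x,\xi)$ is even in $\xi$'' to pass from the two complex $\pm$ systems to the single real system \eqref{mainmainode1}. You instead give an explicit reduction: complexification is harmless, the scalar $c=\pm i/2$ is absorbed because the strictly block-subdiagonal $A_{sub}$ is nilpotent and $B$ feeds only the top block (so the reachable set rescales block-by-block by powers of $c$), and time reversal together with the flow group law exchanges the $+$ and $-$ families once one quantifies over all $\rho_0$. Your argument is more self-contained and exploits the specific cascade structure of $A_{sub}$; the paper's route is terser but leans on a symmetry lemma whose hypothesis $a_\pm=a_\mp\circ\sigma$ is not literally satisfied here (since $a_\pm=\mp iA_{sub}^*$ is independent of $\xi$), so your detailed reduction is in fact the cleaner justification of this passage.
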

\bnp
\ref{itemcontrol} $\Leftrightarrow $\ref{itemobserv} is exactly the classical HUM method. We refer for instance to \cite{Coron07}.

\ref{itemobserv}$\Leftrightarrow $\ref{itemtruc1} is exactly Theorem \ref{mainthmstrong} once we have noticed that $\Lambda^{-1} P_A^* \Lambda$ is a pseudodifferential of order $1$ with principal symbol $\lambda(x,\xi) A_{sub}(x)^*$ as noticed in Lemma \ref{lmsymbolePA}, while $B^* \Lambda$ is of symbol $\lambda(x,\xi) B(x)^* $. Note also that $P_A$ and $\lambda(x,\xi) B(x)^* $ are not differential operators, but Theorem \ref{mainthmPseudo} is still true and we can apply Proposition \ref{propodexxxx} to get the same result, using that $\lambda(x,\xi)$ is even in $\xi$.

\ref{itemtruc1}$\Leftrightarrow $\ref{itemtruc2} is obtained undoing the change of variable done in Lemma \ref{lmchangeunknown} in the elliptic equation (modulo some duality). More precisely, $W$ solves the equation $-\Delta_g W+\lambda^2 W+ \Lambda^{-1} P_A^* \Lambda W=0$ if and only if $U=T\Lambda W$ solves $-\Delta_g U+\lambda^2 U+ A^*(x)U=0$. $B^* \Lambda W=0$ is equivalent to $B^* T^{-1}U=0$ and then $B^*U=0$ since $T^{-1}B=B$ and so $B^*=B^*T^{-1}$.
\enp
Theorem \ref{thmcontrolmulti} follows then as a combination of Lemma \ref{lmchangeunknown} and Proposition \ref{propequivzero}.
\section{Examples}\lab{sectionexample}

In this section, we provide two examples as applications of Theorem \ref{mainthm-w}. We will treat the wave equations coupled by velocities of Cascade type, and the wave equations coupled by velocities with (almost) constant coefficients. The results are not always new, but the proof we provide has the advantage to always rely on easy ODE analysis which, we believe makes it valuable and give a common feature for this systems studied in different articles.
\subsection{Wave equations coupled by velocities of cascade-type}

We first consider the observability problem for
wave system Coupled by Velocities of cascade-type:
\be
\lab{toy model two}
\begin{cases}
        \partial_{t}^2u -\Delta_g u +u+ \beta(t,x) \pa_t v    =   0, \\
        \pa_{t}^2 v-\Delta_g v +v= 0,
\end{cases}
\ee
where the coupling term $\beta\in C^\infty([0,T]\times \mathcal{M}) $.

    Based on Theorem \ref{mainthm-w}, we can prove the following statement. The result is mostly contained in \cite{DehmanLeLeautaud14} which considers the same problem with zero order coupling or coupling $\beta(t,x) \Lambda v$   for which the analysis is almost the same. Yet, we believe that the proof we present here, which mostly relies on Theorem \ref{mainthm-w} and ODE analysis, is interesting because it gives some ODE interpretation of some computations that were performed in \cite{DehmanLeLeautaud14}. We refer for example to \cite[Theorem 5.3]{DehmanLeLeautaud14} where the matrix of the principal symbol of the HUM operator is computed and corresponds to the Gramian operator of the ODE control problem that we compute in Lemma \ref{lmequivODEcascade} below.
\begin{prop}\lab{example:cascade:prop1}
 Assume that $\alpha\in C^\infty(\R\times \mathcal{M})$. Then weak observability inequality \be\lab{weakly obs one}\begin{split}
\int_0^T \int_{\mathcal{M}} \alpha^{2} (|\nabla u|_g^2+|u|^{2})dxdt&+
c\|(u_0,u_1,v_0,v_1)\|^2_{H^{\f{1}{2}}\times H^{-\f{1}{2}}\times H^{\f{1}{2}}\times H^{-\f{1}{2}}}
\\&\geq C \|(u_0,u_1,v_0,v_1)\|^2_{H^1\times L^2\times H^1\times L^2},
\end{split}
\ee
holds  if and only if $\alpha,\beta$ satisfy the following property
\bnan
\nonumber
&&\forall \rho_0\in S^*\mathcal{M}, \exists 0<t_1<t_2< T, \textnormal{ such that }\\
 \label{Tint}
 &&  \quad \alpha(t_{1},\varphi_{t_1}(\rho_0))\neq 0,\alpha(t_{2},\varphi_{t_2}(\rho_0))\neq 0, \int_{t_{1}}^{t_{2}}\beta(\tau,\varphi_{\tau}(\rho_0))\neq 0
\enan
Here $\varphi_t$ is Hamiltonnian flow of $|\xi|_x$ defined in Theorem \ref{mainthm-w}, $c,C$ are two positive constants independent of the initial data.
\end{prop}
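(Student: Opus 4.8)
The plan is to deduce Proposition~\ref{example:cascade:prop1} directly from the main equivalence, Theorem~\ref{mainthm-w} (more precisely its pseudodifferential version, Theorem~\ref{mainthmPseudo}, together with the symmetry reduction Proposition~\ref{propodexxxx}), by identifying the attached finite dimensional control system \eqref{mainmainode} and then carrying out an elementary ODE analysis of it. So the argument naturally splits into two steps: first recast \eqref{toy model two} in the abstract framework and read off the ODE system, then characterise its exact controllability and recognise the answer as \eqref{Tint}.

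First I would put \eqref{toy model two} in the form \eqref{mainwave1.1} with $N=2$, $V=(u,v)^{tr}$: here $L=A_0\pa_t+A_1$ with $A_0=\left(\begin{smallmatrix}0&\beta\\0&0\end{smallmatrix}\right)$ (multiplication, hence of order $0$) and $A_1=Id$ (the Klein--Gordon mass terms $+u,+v$; of order $0$, so its order-one principal symbol vanishes). For the observation, one notes that $\int_{\mathcal M}\alpha^2(|\nabla u|_g^2+|u|^2)\,dx=\|\alpha\Lambda u\|_{L^2}^2$ up to a quadratic form in $u$ of order one, whose time integral is bounded by $C\|(u_0,u_1)\|_{(H^{1/2})\times(H^{-1/2})}^2$ thanks to energy estimates and hence is absorbed by the weak term in \eqref{mainobsinw}. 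Thus, modulo lower order, the observed quantity is $\|DV(t)\|_{(L^2)^K}^2$ with $K=1$ and $D=(\alpha\Lambda,\,0)$, so that $a_0=\left(\begin{smallmatrix}0&\beta\\0&0\end{smallmatrix}\right)$, $a_1=0$, $d_0=0$, $d_1=(\alpha|\xi|_x,0)$, and therefore $a=a_0$, $d=d_0-\f{d_1}{i|\xi|_x}=i(\alpha,0)$. Since $a_0,d_1$ are even in $\xi$ and $a_1$ is (trivially) odd, Proposition~\ref{propodexxxx} applies and, by Theorem~\ref{mainthm-w}, the weak observability of \eqref{weakly obs one} is equivalent to: for every $\rho_0\in S^*\mathcal M$, the control system \eqref{mainmainode} is exactly controllable on $[0,T]$.

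Next I would write \eqref{mainmainode} explicitly. Setting $\wt\alpha(t):=\alpha(t,\varphi_t(\rho_0))$ and $\wt\beta(t):=\beta(t,\varphi_t(\rho_0))$, and $X=(X_1,X_2)^{tr}$, the system reads, after dividing by harmless (real/unimodular) constants,
\[
\dot X_1(t)=\wt\alpha(t)\,u(t),\qquad \dot X_2(t)=\wt\beta(t)\,X_1(t),\qquad X(0)=X_0,\quad u\in L^2(0,T;\C).
\]
(This computation, together with the identification of its Gramian, is the content of the announced Lemma~\ref{lmequivODEcascade}, which also matches \cite[Theorem~5.3]{DehmanLeLeautaud14}.) Solving: $X_1(T)=X_1(0)+\int_0^T\wt\alpha\,u$, and by Fubini $X_2(T)=X_2(0)+X_1(0)\int_0^T\wt\beta+\int_0^T\wt\alpha(s)\bigl(\int_s^T\wt\beta(t)\,dt\bigr)u(s)\,ds$. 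Hence exact controllability on $[0,T]$ holds iff the input-to-state map is onto $\C^2$, i.e. iff the two functions $s\mapsto\wt\alpha(s)$ and $s\mapsto\wt\alpha(s)\int_s^T\wt\beta(t)\,dt$ are linearly independent in $L^2(0,T)$.

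Finally I would translate this linear independence into \eqref{Tint}. Writing $F(s)=\int_s^T\wt\beta$, the two functions are linearly \emph{dependent} iff there is $(c_1,c_2)\neq(0,0)$ with $\wt\alpha(s)\bigl(c_1+c_2F(s)\bigr)=0$ for a.e. $s$; since $\wt\alpha$ and $F$ are continuous, this occurs exactly when either $\wt\alpha\equiv 0$, or $F$ is constant on the open set $\{\wt\alpha\neq0\}$ — equivalently, when $\int_{t_1}^{t_2}\wt\beta=F(t_1)-F(t_2)=0$ for every $t_1<t_2$ with $\wt\alpha(t_1)\neq0$ and $\wt\alpha(t_2)\neq0$. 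This is precisely the negation of condition \eqref{Tint} at $\rho_0$. Quantifying over $\rho_0\in S^*\mathcal M$ and invoking the equivalence from Step~1 finishes the proof. The only genuinely delicate points are in the setup: justifying that replacing the $\alpha\nabla u$-type observation by the pseudodifferential $\alpha\Lambda u$ costs only a lower-order term tolerated by the weak inequality, and checking that Proposition~\ref{propodexxxx} indeed lets us work with the single flow $\varphi_t(\rho_0)$; the ODE controllability analysis itself is straightforward.
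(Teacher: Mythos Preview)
Your proof is correct and follows essentially the same route as the paper: apply Theorem~\ref{mainthm-w} (via its pseudodifferential version and the parity reduction of Proposition~\ref{propodexxxx}) with $D=(\alpha\Lambda,0)$ to reduce to the ODE cascade $\dot X_1=\widetilde\alpha\,u$, $\dot X_2=\widetilde\beta\,X_1$, and then characterise its controllability as linear independence of $\widetilde\alpha$ and $\widetilde\alpha\int\widetilde\beta$, which is precisely \eqref{Tint}. The paper packages the ODE step as Lemma~\ref{lmequivODEcascade} (your condition with $F(s)=\int_s^T\widetilde\beta$ and the paper's with $\int_0^t\widetilde\beta$ differ by a constant, hence give the same linear-independence criterion), and is actually less explicit than you are about replacing $\alpha^2(|\nabla u|_g^2+|u|^2)$ by $|\alpha\Lambda u|^2$ modulo a term absorbed by the compact remainder.
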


\bnp[Proof of Proposition \ref{example:cascade:prop1}]
We apply Theorem \ref{mainthm-w} (actually a variant) with $D(u,v)=\alpha(t,x) \Lambda u$ with $\Lambda=(-\Delta_g+1)^{1/2}$ and $L(u,v)=(\beta(t,x)v_{t},0)$, which states that the weak observability is equivalent to the controllability of the following ODE system for any $\rho_0\in S^{*}M$:
\be \small
\lab{toy model two ode}
    \begin{cases} \displaystyle
       \dot{ X}(t)=\f{-\beta(t,\varphi_t(\rho_0))}{2}  \left(
                     \begin{array}{cc}
                       0 & 0 \\
                       1 & 0 \\
                     \end{array}
                   \right)
        X(t)+ \f{\alpha(t,\varphi_t(\rho_0))}{2} \left(
             \begin{array}{c}
               1  \\
               0 \\
             \end{array}
           \right)g(t),\\
    X(0)=X_0\in \R^N.
    \end{cases}
\ee
where $g\in L^2(0,T)$ is a scalar control function.
The proposition follows then directly from Lemma \ref{lmequivODEcascade} below.
\enp

Under additional assumptions, we can obtain the strong observability, as in \cite{DehmanLeLeautaud14}.
\begin{prop}
\label{prop:cascadepos} With the assumptions as Proposition \ref{example:cascade:prop1},  let us assume furthermore that $\alpha$ and $\beta$ only depend on $x$ and $\beta$ satisfies sign condition, i.e., $\beta\geq0$ (or $\beta\leq 0$), then  the observability inequality
\be\lab{ex:1:strong obs onecascade}
\int_0^T \int_{\mathcal{M}}\alpha^{2} (|\nabla u|_g^2+|u|^{2})dxdt\geq C \|(u_0,u_1,v_0,v_1)\|^2_{H^1\times L^2\times H^1\times L^2},
\ee
holds  if and only if $T> T_{\omega\rightarrow o\rightarrow \omega}$,
where $T_{\omega\rightarrow o\rightarrow \omega}$ (cf.\cite{DehmanLeLeautaud14}) is defined by
 \be\displaystyle\lab{cascade time}\begin{split}
T_{\omega\rightarrow o\rightarrow \omega}=\inf\{&T>0  ~s.t.~ \forall \varphi_0(\rho_0)=\rho_0\in S^*\mathcal{M}, \exists 0<t_1<t_2<t_3< T ,\\ & \quad\text{such that}~ \alpha(\varphi_{t_1}(\rho_0)),\alpha(\varphi_{t_3}(\rho_0))\neq 0,\beta(\varphi_{t_2}(\rho_0))\neq 0\}.
\end{split}
\ee
\end{prop}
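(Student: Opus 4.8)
The plan is to deduce \eqref{ex:1:strong obs onecascade} from the strong observability criterion of Theorem \ref{mainthmstrong}, used in its pseudodifferential variant (cf.\ Theorem \ref{mainthmPseudo}), since the observation $D(u,v)=\alpha\Lambda u$ is pseudodifferential rather than differential. As $\alpha$ and $\beta$ depend only on $x$, the coupling is time-invariant, $A_0=\left(\begin{smallmatrix}0&\beta\\0&0\end{smallmatrix}\right)$, $A_1=Id$, so this criterion applies: \eqref{ex:1:strong obs onecascade} holds if and only if (i) for every $\rho_0\in S^*\mathcal{M}$ the control system \eqref{toy model two ode} is exactly controllable on $[0,T]$, and (ii) the unique continuation Property \ref{propUCPeign} holds, namely every $(u,v)\in(H^1)^2$ and $\lambda\in\C$ with $-\Delta_g u+(\lambda^2+1)u+\lambda\beta v=0$, $-\Delta_g v+(\lambda^2+1)v=0$ and $\alpha\Lambda u=0$ must satisfy $u=v=0$.

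For (i) I would use Lemma \ref{lmequivODEcascade}: exact controllability of \eqref{toy model two ode} at $\rho_0$ on $[0,T]$ is equivalent to the existence of $0<t_1<t_2<T$ with $\alpha(\varphi_{t_1}(\rho_0))\neq0$, $\alpha(\varphi_{t_2}(\rho_0))\neq0$ and $\int_{t_1}^{t_2}\beta(\varphi_\tau(\rho_0))\,d\tau\neq0$. Because $\beta$ has a constant sign, the integral condition is equivalent to $\beta(\varphi_{t_*}(\rho_0))\neq0$ for some $t_*\in(t_1,t_2)$; hence ``\eqref{toy model two ode} is controllable for all $\rho_0$ on $[0,T]$'' is precisely the condition defining $T_{\omega\rightarrow o\rightarrow \omega}<T$ in \eqref{cascade time}. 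A routine compactness argument on $S^*\mathcal{M}$ (continuity of $(\rho_0,t)\mapsto\alpha(\varphi_t(\rho_0)),\beta(\varphi_t(\rho_0))$, and the fact that $0<t_1<t_2<t_3<T$ is an open constraint, monotone in $T$) shows that the admissible set of $T$ is open and unbounded above, hence equal to $(T_{\omega\rightarrow o\rightarrow \omega},\infty)$. Thus (i) is equivalent to $T>T_{\omega\rightarrow o\rightarrow \omega}$.

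For (ii) I would combine the cascade structure with the sign of $\beta$. If $v\neq0$, the second equation forces $\kappa:=-(\lambda^2+1)$ to be an eigenvalue of $-\Delta_g$ and $v$ to lie in the corresponding eigenspace, with $\lambda\neq0$ (since $\lambda^2=-1-\kappa\le-1$); pairing the first equation with $\bar v$ and using $(-\Delta_g-\kappa)v=0$ gives $\lambda\int_{\mathcal{M}}\beta|v|^2\,dx=0$, hence $\int_{\mathcal{M}}\beta|v|^2\,dx=0$, and the sign condition forces $v\equiv0$ on the nonempty open set where $\beta\neq0$; Aronszajn's unique continuation for $-\Delta_g v=\kappa v$ then yields $v\equiv0$, a contradiction. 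When $v=0$, using that $\Lambda$ commutes with $-\Delta_g$, the function $w=\Lambda u$ solves $-\Delta_g w+(\lambda^2+1)w=0$ and vanishes on the nonempty open set where $\alpha\neq0$ (which is the content of $\alpha\Lambda u=0$); scalar unique continuation gives $w\equiv0$, hence $u\equiv0$. Here $\{\alpha\neq0\}$ and $\{\beta\neq0\}$ are nonempty because $T_{\omega\rightarrow o\rightarrow \omega}<\infty$; in the degenerate case $T_{\omega\rightarrow o\rightarrow \omega}=\infty$ both sides of the asserted equivalence are vacuously false.

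Combining (i), (ii) and Theorem \ref{mainthmstrong} yields \eqref{ex:1:strong obs onecascade} $\Leftrightarrow$ $T>T_{\omega\rightarrow o\rightarrow \omega}$. I expect step (ii) to be the main obstacle: the spectral bookkeeping of the cascade must be organized so that the sign hypothesis on $\beta$ produces the identity $\int_{\mathcal{M}}\beta|v|^2=0$, after which one relies on the scalar Aronszajn--Carleman unique continuation; step (i), by contrast, is a direct translation of Lemma \ref{lmequivODEcascade} plus a standard compactness argument. As a by-product this recovers, through elementary ODE analysis, both the unique continuation and the sharp time $T_{\omega\rightarrow o\rightarrow \omega}$ of \cite{DehmanLeLeautaud14}.
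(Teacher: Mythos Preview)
Your proof is correct and follows essentially the same route as the paper: both reduce to Theorem \ref{mainthmstrong}, use Lemma \ref{lmequivODEcascade} (its sign-condition clause) for the ODE controllability equivalence with $T>T_{\omega\rightarrow o\rightarrow \omega}$, and verify Property \ref{propUCPeign} via the cascade structure, the sign of $\beta$, and scalar unique continuation. The only cosmetic difference is that the paper packages the unique continuation step by invoking the general appendix result Proposition \ref{eigenproblem:prop} (with $A=\left(\begin{smallmatrix}0&0\\1&0\end{smallmatrix}\right)$, $B=\left(\begin{smallmatrix}1\\0\end{smallmatrix}\right)$, $\tilde K=1$), whereas you carry out the same pairing-with-$\bar v$ argument explicitly for this $2\times 2$ case; your direct computation is exactly what the proof of Proposition \ref{eigenproblem:prop} specializes to here.
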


\bnp[Proof of Proposition \ref{prop:cascadepos}]
   We apply Lemma \ref{lmequivODEcascade} (the case $\beta\geq0$) to get the equivalence for weak observability, Following Theorem \ref{mainthmstrong}, it only suffices to prove System \eqref{toy model two} satisfies unique continuation. Let $A=\left(
                         \begin{array}{cc}
                           0 & 0 \\
                           1 & 0 \\
                         \end{array}
                       \right), B=\left(
                                    \begin{array}{c}
                                      1 \\
                                      0 \\
                                    \end{array}
                                  \right),
  $  it is easy to see that $A,B$ satisfy Kalman Rank Condition and $A$ only has eigenvalue 0. By Proposition \ref{eigenproblem:prop} in the Appendix, we conclude the proof of unique continuation of System \eqref{toy model two} and therefore of the Proposition. Note that Proposition \ref{eigenproblem:prop} does not take into account the case $\lambda=0$ in \eqref{eqnUCP}. Yet, this case is trivial because we have replaced the wave equation by the Klein-Gordon. Indeed, $(u,v)$ is solution of $0=-\Delta_gu+u=-\Delta_gv +v$ and is zero.
	\enp
\begin{lem}
\label{lmequivODEcascade}
We have the following equivalence for $\alpha$, $\beta\in C([0,T])$:
\begin{enumerate}
\setlength\itemsep{-0.2em}
\item \label{enumODE}The following control system is controlable. \be
%\lab{toy model two odelem}
    \begin{cases} \displaystyle
       \dot{ X}=\beta(t)  \left(
                     \begin{array}{cc}
                       0 & 0 \\
                       1 & 0 \\
                     \end{array}
                   \right)
        X+ \alpha(t) \left(
             \begin{array}{c}
               1  \\
               0 \\
             \end{array}
           \right)g,\\
    X(0)=X_0\in \R^2,
    \end{cases}
\ee
where $g\in L^2(0,T)$ is a scalar control function.
\item \label{enum2prop} There is no $(c,d)\in \C^{2}\setminus (0,0)$ so that $c\alpha(t)=d \alpha(t)\int_0^{t}\beta(\tau)d\tau$ for all $t\in [0,T]$.

\item \label{enumt123}
There exists $0<t_1<t_2< T$, such that
\bna
 \alpha(t_{1})\neq 0,\alpha(t_{2})\neq 0, \int_{t_{1}}^{t_{2}}\beta(\tau)\neq 0.
\ena

\end{enumerate}
Moreover, if in addition, we have $\beta(t)\geq0$ (or $\beta(t)\leq0$), this is also equivalent to
\bna \exists 0<t_1<t_2<t_{3}< T, \textnormal{ such that } \alpha(t_{1})\neq 0,\alpha(t_{3})\neq 0, \beta(t_{2})\neq 0.
\ena
\end{lem}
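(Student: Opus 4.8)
The plan is to prove the chain of equivalences $\ref{enumODE}\Leftrightarrow\ref{enum2prop}\Leftrightarrow\ref{enumt123}$, and then the supplementary equivalence under a sign assumption on $\beta$. I would start with $\ref{enumODE}\Leftrightarrow\ref{enum2prop}$. Writing the system explicitly, with $X=(X_1,X_2)^{tr}$, it reads $\dot X_1=\alpha(t)g$, $\dot X_2=\beta(t)X_1$. The resolvent of the free dynamics is easily computed: since $A=\left(\begin{smallmatrix}0&0\\1&0\end{smallmatrix}\right)$ is nilpotent, the state-transition matrix from $0$ to $t$ is $\left(\begin{smallmatrix}1&0\\\int_0^t\beta&1\end{smallmatrix}\right)$ (or its inverse as needed). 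By the classical Kalman/Gramian criterion (see \cite[Theorem 1.11]{Coron07}), exact controllability on $[0,T]$ is equivalent to: there is no nonzero row vector $\eta^{*}\in\C^{1\times 2}$ with $\eta^{*}R(0,t)B\alpha(t)=0$ for a.e.\ $t\in[0,T]$. Transposing $R(0,t)$ and computing $R(0,t)^{*}B=(1,\int_0^t\beta(\tau)d\tau)^{tr}$ up to the explicit form of the resolvent, the obstruction becomes exactly the existence of $(c,d)\neq(0,0)$ with $c\,\alpha(t)+d\,\alpha(t)\int_0^t\beta(\tau)d\tau=0$ for all $t$, which is the negation of \ref{enum2prop}. (I should be a little careful about which endpoint the resolvent is based at and whether the pair $(c,d)$ should be read off $R(0,t)^{*}$ or $R(t,0)^{*}$; a short direct computation fixes the signs.)

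Next, $\ref{enum2prop}\Leftrightarrow\ref{enumt123}$, which is a purely elementary real/complex-analysis statement about two continuous functions. For $\neg\ref{enumt123}\Rightarrow\neg\ref{enum2prop}$: if for every pair of points $t_1<t_2$ with $\alpha(t_1)\alpha(t_2)\neq0$ one has $\int_{t_1}^{t_2}\beta=0$, then on the open set $\{\alpha\neq0\}$ the function $t\mapsto\int_0^t\beta(\tau)d\tau$ is constant on each connected component that contains a zero of... more precisely one shows $B(t):=\int_0^t\beta$ takes a single constant value $c_0$ on the whole set $\{\alpha\neq 0\}$ (any two such points have equal $B$ by the hypothesis applied with those two points, possibly after also invoking a limiting argument at the boundary), and then $\alpha(t)\bigl(B(t)-c_0\bigr)=0$ for all $t\in[0,T]$, i.e.\ $(c,d)=(c_0,-1)\neq(0,0)$ witnesses $\neg\ref{enum2prop}$. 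Conversely, if $(c,d)\neq(0,0)$ satisfies $c\alpha=d\alpha B$ everywhere, then $d\neq0$ necessarily (else $c\alpha\equiv0$ forces $c=0$), so $\alpha(t)(B(t)-c/d)=0$; hence on $\{\alpha\neq0\}$ we have $B\equiv c/d$ constant, which precludes finding $t_1<t_2$ in that set with $\int_{t_1}^{t_2}\beta=B(t_2)-B(t_1)\neq0$, i.e.\ $\neg\ref{enumt123}$.

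For the final supplementary equivalence, assume $\beta\geq0$ (the case $\beta\leq0$ is identical after a sign flip). If there exist $0<t_1<t_2<t_3<T$ with $\alpha(t_1)\neq0$, $\alpha(t_3)\neq0$ and $\beta(t_2)\neq0$: by continuity and $\beta\geq0$, the function $B$ is nondecreasing, and since $\beta(t_2)>0$ it is strictly increasing across $t_2$; because $t_1<t_2<t_3$, $\int_{t_1}^{t_3}\beta=B(t_3)-B(t_1)>0$, so \ref{enumt123} holds with the pair $(t_1,t_3)$. Conversely, if \ref{enumt123} holds, take $t_1<t_2$ with $\alpha(t_i)\neq0$ and $\int_{t_1}^{t_2}\beta\neq0$; since $\beta\geq0$ this integral is positive, so $\beta$ is not identically zero on $(t_1,t_2)$, hence there is $t_2'\in(t_1,t_2)$ with $\beta(t_2')\neq0$, and renaming $(t_1,t_2',t_2)$ as $(t_1,t_2,t_3)$ gives the three-point condition.

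The main obstacle I anticipate is the bookkeeping in the first equivalence, specifically matching the author's conventions for the resolvent $R(\cdot,\cdot;\rho_0)$ (as defined in \cite[Proposition 1.5]{Coron07}) and the precise form of the Gramian so that the vector $(c,d)$ in statement \ref{enum2prop} comes out with exactly the stated normalization; the underlying linear algebra is trivial because $A$ is $2\times2$ nilpotent, but one must be attentive to adjoints and time-orientation. The analysis steps $\ref{enum2prop}\Leftrightarrow\ref{enumt123}$ are elementary but require a small care at the boundary of $\{\alpha\neq0\}$, handled by continuity of $B$.
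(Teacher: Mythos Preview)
Your proposal is correct and follows essentially the same approach as the paper. The paper dismisses $\ref{enumODE}\Leftrightarrow\ref{enum2prop}$ as ``classical control theory'' without details, so your explicit resolvent computation is a welcome addition; for $\ref{enum2prop}\Leftrightarrow\ref{enumt123}$ the paper argues the direct implications while you argue the contrapositives, but the content is the same (the paper's construction of $t_2$ via the auxiliary function $f(t)=\alpha(t)\int_{t_1}^t\beta$ is dual to your observation that $B(t)=\int_0^t\beta$ is constant on $\{\alpha\neq 0\}$). Two minor remarks: your worry about a ``limiting argument at the boundary of $\{\alpha\neq 0\}$'' is unnecessary, since for \emph{any} two points $s,t$ with $\alpha(s),\alpha(t)\neq 0$ the hypothesis $\neg\ref{enumt123}$ directly gives $B(s)=B(t)$; and in $\neg\ref{enum2prop}\Rightarrow\neg\ref{enumt123}$ you should note separately that if $\alpha\equiv 0$ then $\neg\ref{enumt123}$ is trivially true (your argument ``$d=0$ forces $c=0$'' presupposes $\alpha\not\equiv 0$).
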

\bnp
\ref{enumODE}$\Leftrightarrow $\ref{enum2prop} follows from classical control theory of finite dimensional system, we omit it.

Now, we prove \ref{enumt123}$\Rightarrow $\ref{enum2prop}.
Assume
$t_1,t_2 \text{ such that } 0< t_1<t_2< T, \alpha(t_1),\alpha(t_2)\neq 0,  \int_{t_{1}}^{t_{2}}\beta(\tau)\neq 0
.$
Take $(c,d)$ so that $c\alpha(t)=d \alpha(t)\int_t^{0}\beta(\tau)d\tau$ for all $t\in [0,T]$, we shall prove $c=d=0$. We have then, since $\alpha(t_{1})\neq 0$ and $\alpha(t_{2})\neq 0$
\bnan
\label{eqnt12}
c=d \int_{t_{1}}^{0}\beta(\tau)d\tau; \quad c=d\int_{t_{2}}^{0}\beta(\tau)d\tau
\enan
and by difference $0=d \int_{t_{1}}^{t_{2}}\beta(\tau)d\tau$, so $d=0$ since $\int_{t_{1}}^{t_{2}}\beta(\tau)d\tau\neq 0$. This gives $c=0$ after \eqref{eqnt12}.

We finish with \ref{enum2prop}$\Rightarrow $\ref{enumt123}.

First, \ref{enum2prop} implies that $\alpha\not\equiv 0$ (otherwise any $(c,d)\neq 0$ works)) and there exists $t_{1}$ so that $\alpha(t_{1})\neq 0$. Define the function $f(t)=\alpha(t)\int^{t}_{t_{1}}  \beta(\tau)d\tau$. We prove $f\not\equiv 0$. Indeed, if it is the case, we have $0= \alpha(t)\int^{t}_{t_{1}} \beta(\tau)d\tau$ for all $t\in[0,T]$. In particular, $0=\alpha(t)\left[\int_{t}^{0}  \beta(\tau)d\tau-\int_{t_{1}}^{0}  \beta(\tau)d\tau\right]$ for all $t\in[0,T]$, which is impossible by assumption. So, we have proved $f\not\equiv0$ and there exists $t_{2}$ with $f(t_{2})\neq0$, and in particular, $\alpha(t_{2})\neq 0$ and $\int^{t_{2}}_{t_{1}}  \beta(\tau)d\tau\neq 0$, which is the expected property \ref{enumt123}, up to exchanging the role of $t_{1}$ and $t_{2}$. Note that we have only selected $0\leq t_{1}<t_{2}\leq T$, but we can impose strict inequality with the same conclusion by continuity.

The last equivalence if $\beta\geq0$ is obvious. \enp

\subsection{Wave equations coupled by first or zero order terms of constant coefficients}
\label{sectioncouplingconstant}
In this Section, we explain how our result allows to recover and precise some results of Liard-Lissy \cite{LiardLissy17} and Lissy-Zuazua \cite{LissyZuazua} which were obtained with a complete different method. In particular, it allows to precise the regularity of the directions that can be reached.

Let $A\in \R^{ N\times N}$ and $B\in\R^{ K\times N}$ be constant matrices. In the notations of Section \ref{sectzeroorder}, we place ourselves in the particular cases: $A(x)=A$ constant and $B(x)=B \chi_{\omega}$

In particular, our results precise the result in the following sense. \cite[Theorem 4.2]{LiardLissy17} proves controllability in $(H^{2N-1})^N\times (H^{2N-2})^N$ with control in $L^2)$ under the Kalman Rank Condition, i.e: \be\lab{kalman rank condition}
rank(B,AB,...,A^{N-1}B)=N.
\ee
Our results proves the same result in $\mathcal{H}^1\times \mathcal{H}^0$ which is defined by \eqref{zero-Hs-space}.

%\todo{all this is to be precised}
Two situations can be considered, coupling of order $1$ or $0$, that we detail in separate subsection.
\subsubsection{Constant coupling of order $1$}
%coupling $L=A \partial_t$ where we obtain control in the energy space $H^{1}(M)^N\times L^{2}(M)^N$ iff GCC +Kalman  $rank(B,AB,...,A^{N-1}B)=N$.
%Let $M,g$ be a compact Riemannian Manifold with metric $g$.
We consider the following system of wave equations on a compact manifold ($\mathcal{M},g$):
\be\lab{cons-case}
\begin{cases}
(\pa_t^2-\Delta_g +1)V+A\pa_tV=B\chi_\omega(x) u.\\
(V(0),\partial_tV(0))=(V_0,V_1).
\end{cases}
\ee
where $V\in \mathbb{R}^N$, $A\in \R^{N\times N}$ and $B\in \R^{N\times K}$, $u\in L^2(0,T; (L^2)^K)$. $\chi_\omega(x)$ denotes a smooth  function  which satisfies
\be\lab{characteristicfun}
\chi_\omega(x):= \begin{cases}
1, \quad \text{if }~x\in \omega;\\
0,\quad \text{if }~x\in \mathcal{M}\backslash\tilde{\omega}
\end{cases}
\ee
where $\omega\subset \tilde{\omega}$.
Weak solution of \eqref{cons-case} exists with initial data $(V_0,V_1)\in  (L^{2})^N \times (H^{-1})^N$.

\begin{prop}\lab{cons-case:prop}
  Assume $A,B$ satisfy Kalman rank condition and $\omega$ satisfies GCC. Then System \eqref{cons-case} is exactly controllable with initial data $(V_0,V_1)\in  (L^{2})^N \times (H^{-1})^N$.
  \end{prop}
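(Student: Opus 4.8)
The plan is to derive Proposition~\ref{cons-case:prop} from Theorem~\ref{mainthmstrong} (whose proof extends to the pseudodifferential observation operators that arise here, cf.\ Theorem~\ref{mainthmPseudo}) through the classical HUM duality. First I would record that, by the Russell--Lions duality recalled in the introduction, the exact controllability of System~\eqref{cons-case} in $(L^2)^N\times(H^{-1})^N$ with controls in $L^2(0,T;(L^2)^K)$ is equivalent to the exact observability, in the sense of Definition~\ref{defExactobs}, of its adjoint Klein--Gordon--type system, which is of the form \eqref{mainwave1.1} with time--invariant coefficients: the coupling operator is $-A^{*}\pa_t$ (dual to $A\pa_t$, the shift $+1$ being an innocuous zero order term), i.e.\ $A_0=-A^{*}$ and $A_1=Id_{N\times N}$ in the notation \eqref{Ldiff}, while the observation dual to the control $B\chi_\omega$ at the required energy level has principal symbol $d=i\,\chi_\omega(x)B^{*}$ (the factor coming from the half--wave reduction of Section~\ref{subsectGramian}). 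Thus Theorem~\ref{mainthmstrong} applies and the proof reduces to checking its two conditions: the exact controllability of the finite dimensional system~\eqref{mainmainode} for every $\rho_0\in S^*\mathcal{M}$, and the unique continuation of eigenfunctions, Property~\ref{propUCPeign}.

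For the ODE condition I would compute $a=a_0-a_1/(i|\xi|_x)=-A^{*}$, so that \eqref{mainmainode}, after absorbing the constant phase into the control, reads
\beq
\dot X(t)=-\tfrac12 A\,X(t)+\tfrac12\,\chi_\omega(\varphi_t(\rho_0))\,B\,u(t),\qquad X(0)=X_0\in\C^N.
\eeq
Because $\chi_\omega\ge 0$ and $\omega$ satisfies GCC, every geodesic $t\mapsto\varphi_t(\rho_0)$ enters $\omega$ within the control time, so $\chi_\omega(\varphi_t(\rho_0))\equiv 1$ on some nonempty open subinterval $I\subset(0,T)$. On $I$ the system has the constant pair $(-\tfrac12 A,\tfrac12 B)$, which satisfies the Kalman rank condition because $(A,B)$ does, hence is controllable on $I$; since the Gramian $G_{\rho_0}(T)$ of \eqref{GGGGG} dominates the analogous Gramian over $I$, which is positive definite, the system is controllable on $[0,T]$, for every $\rho_0$.

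The substantive step is the unique continuation of eigenfunctions, which I would establish along the lines of Proposition~\ref{eigenproblem:prop}. Let $\lambda\in\C$ and $V\in(H^1)^N$ solve $-\Delta_g V+(\lambda^2+1)V-\lambda A^{*}V=0$ together with $\chi_\omega B^{*}\Lambda V=0$. The case $\lambda=0$ is immediate, for then $-\Delta_g V+V=0$ forces $V\equiv0$. If $\lambda\neq0$, set $M:=\lambda A^{*}-(\lambda^2+1)Id$ and split $\C^N$ into the generalized eigenspaces of $A^{*}$ (equivalently of $M$); since the spectral projectors commute with $M$, the constant--coefficient equation $-\Delta_g V=MV$ restricts to each of them, where it carries a nilpotent coupling that is nevertheless killed upon testing against eigenfunctions of the self--adjoint operator $-\Delta_g$. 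One thus gets $V=\sum_j V_j$ with $A^{*}V_j=\mu_j V_j$ and $-\Delta_g V_j=\nu_j V_j$, the $\nu_j=\lambda\mu_j-(\lambda^2+1)$ being pairwise distinct. Applying powers of $-\Delta_g$ to the relation $\sum_j(\nu_j+1)^{1/2}B^{*}V_j=0$, valid on the open set $\omega$, and inverting a Vandermonde matrix gives $B^{*}V_j=0$ on $\omega$ for every $j$. Finally, expanding $V_j=\sum_\ell w_\ell\eta_\ell$ in a basis $(\eta_\ell)$ of $\ker(A^{*}-\mu_j)$ with each $w_\ell$ a genuine $\nu_j$--eigenfunction of $-\Delta_g$, a Popov--Belevitch--Hautus criterion equivalent to the Kalman rank condition shows $B^{*}$ is injective on $\ker(A^{*}-\mu_j)$, so the vectors $B^{*}\eta_\ell$ are linearly independent and hence $w_\ell\equiv0$ on $\omega$; the classical weak unique continuation property for second order elliptic operators then yields $w_\ell\equiv0$ on $\mathcal{M}$. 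Therefore $V\equiv0$.

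Combining these verifications with Theorem~\ref{mainthmstrong} gives exact observability of the adjoint system and hence exact controllability of System~\eqref{cons-case}. The main obstacle is the eigenfunction unique continuation: it is the one place where the purely algebraic Kalman/Hautus structure of $(A,B)$ must be interfaced with a genuine PDE unique continuation theorem, and the bookkeeping needed to handle nontrivial Jordan blocks of $A$ and eigenvalue multiplicities is what makes it nonroutine --- which is why it is isolated as a separate statement in the Appendix.
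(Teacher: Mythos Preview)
Your proposal is correct and follows essentially the same route as the paper: reduce to exact observability of the adjoint system by HUM duality, invoke Theorem~\ref{mainthmstrong} (in its pseudodifferential form), verify the ODE condition from GCC together with the Kalman rank condition on a subinterval where $\chi_\omega(\varphi_t(\rho_0))\equiv 1$, and then check the eigenfunction unique continuation. The paper simply cites Proposition~\ref{eigenproblem:prop} for that last step, whereas you sketch a direct argument exploiting that the coupling is constant (spectral expansion in Laplace eigenfunctions to kill the Jordan nilpotent part, Vandermonde separation of the distinct eigenvalues, Hautus to get injectivity of $B^*$ on each eigenspace, and scalar elliptic unique continuation); this is a legitimate and slightly cleaner variant in the constant--coefficient situation, but not a different strategy.
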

\bnp[Proof of Proposition \ref{cons-case:prop}] Firstly, we will apply Corollary \ref{maincorfc} (actually a variant) with $D^* u =B\chi_\omega (x) u$ and $L^* V=A \partial_t V$, which states that co-dimensional controllability (the weak observability of dual system) is equivalent to the controllability of the following ODE system for any $\rho\in S^{*}M$:
\be\lab{cons-case:ode}
\begin{cases}
\dot X(t)=\f{1}{2}AX(t)+\f{1}{2}B\chi_\omega (\phi_t(\rho_0))u,\\
X(0)=X_0\in \R^N.
\end{cases}
\ee
Since $\omega$ satisfies GCC,$\forall \rho_0\in S^*\mathcal{M}$, we can find an interval $[t_1,t_2]\in \R$, such that $\chi_\omega(\phi_t(\rho_0)) =1, \forall t\in [t_1,t_2]$.
Hence we obtain the exact controllability of \eqref{cons-case:ode} following from classical control theory of ode.
Next we  only need to show the unique continuation property of the following elliptic equations:
 \be\lab{cons-case:ucp}
 \begin{cases}
 (\lambda^2-\Delta_g +1)v-A^{tr}\lambda v=0;\\
 B^{tr}\chi_\omega v=0
 \end{cases} \Rightarrow v\equiv 0.
 \ee
Since $\omega\cap \mathcal{M}=\omega\subset \mathcal{M}$ and $A,B$ satisfy Kalman rank condition, by using Proposition \ref{eigenproblem:prop}, we conclude our proposition \ref{cons-case:prop}.
\enp
\subsubsection{Constant coupling of order $0$}
%coupling ${L=A }$ where we obtain control in the energy space $H^{1+\alpha}(\mathcal{M})^c N\times H^{\alpha}(\mathcal{M})^N$ iff GCC +Kalman  $rank(B,AB,...,A^{N-1}B)=N$ where $\alpha$ should be related to the form of Brunovski of the control system (number of integrations).
We consider the controllability of the system of wave equations coupled in order zero:
\be\lab{cons-case2}
\begin{cases}
(\pa_t^2-\Delta_g )V+AV=B\chi_\omega(x) u.\\
(V(0),\partial_tV(0))=(V_0,V_1).
\end{cases}
\ee
where  $V\in \mathbb{R}^N$, $A\in \R^{N\times N}$ can be written  a matrix "subdiagonal by block" as \eqref{structA} and $B\in \R^{N\times K}$ can be written as \eqref{structB}, $u\in L^2(0,T;(L^2)^K)$, $\chi_\omega$ satisfies \eqref{characteristicfun}.
\begin{prop}
\label{cons-case2-prop}
Assume that $A,B$ satisfy the Kalman rank condition and $\omega$ satisfies GCC. Then, with the notations of Lemma \ref{lmequivBrunov}, System \eqref{systemBrunov} is controllable in the space $\mathcal{E}=\mathcal{H}^1\times \mathcal{H}^0$ defined by $\mathcal{H}^s= (H^{s})^{d_1}\times (H^{s+1})^{d_2}\times \cdots \times H^{s+k-1}(\mathcal{M})^{d_k}$ where $k$ and $d_{i}\in \N$, $i=1,\cdots, k$ are given by Proposition \ref{propBrunovsky}.
\end{prop}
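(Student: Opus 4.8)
The plan is to read System~\eqref{systemBrunov} as the instance of System~\eqref{wavezerointro} with $A(x)=\widetilde{A}_{\omega}(x)$ and $B(x)=\chi_{\omega}(x)\widetilde{B}$, and then to invoke Theorem~\ref{thmcontrolmulti}. First I would record, using Proposition~\ref{propBrunovsky} and Lemma~\ref{lmequivBrunov}, that $A(x)$ is already in the subdiagonal-by-block form~\eqref{structA} and $B(x)$ in the block form~\eqref{structB}, with $k$ and the $d_i$ the integers attached to the Brunovsky normal form; thus the energy space is exactly the $\mathcal{E}=\mathcal{H}^1\times\mathcal{H}^0$ of the statement, and well-posedness in $\mathcal{E}$ is Theorem~\ref{thmwellposedmulti}. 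Two structural observations drive the rest: on $\omega$ one has $\chi_{\omega}\equiv1$, hence $\widetilde{A}_{\omega}(x)=\widetilde{A}$ and $B(x)=\widetilde{B}$ there; and $\widetilde{A}$, being the companion/integrator matrix of the Brunovsky form, is nilpotent and strictly subdiagonal by block, so $\widetilde{A}_{sub}=\widetilde{A}$, while $(\widetilde{A},\widetilde{B})$ satisfies the Kalman rank condition (Lemma~\ref{lmequivBrunov}). By Theorem~\ref{thmcontrolmulti} it then remains to check two things.

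For the ODE condition: fix $T>0$ for which $\omega$ satisfies GCC and let $\rho_0\in S^*\mathcal{M}$. GCC produces a time at which the geodesic issued from $\rho_0$ enters the open set $\omega$, hence an open subinterval $(t_1,t_2)\subset(0,T)$ on which $\chi_{\omega}(\varphi_t(\rho_0))=1$. On $(t_1,t_2)$ System~\eqref{mainmainode1} is the constant-coefficient system $\dot X=\tfrac12\widetilde{A}X+\tfrac12\widetilde{B}u$, which the Kalman rank condition makes exactly controllable on any interval of positive length; controlling only on $(t_1,t_2)$ and letting the system evolve freely elsewhere shows \eqref{mainmainode1} is exactly controllable on $[0,T]$, for every $\rho_0\in S^*\mathcal{M}$.

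For the unique continuation of eigenfunctions: let $\lambda\in\C$ and $V\in(H^1)^N$ solve $-\Delta_g V+\widetilde{A}_{\omega}^*(x)V=\lambda V$ on $\mathcal{M}$ with $\chi_{\omega}(x)\widetilde{B}^*V=0$. On the open set $\omega$ this reads $-\Delta_g V+\widetilde{A}^*V=\lambda V$ and $\widetilde{B}^*V=0$. Since the constant matrices $\widetilde{A}^*,\widetilde{B}^*$ commute with $\Delta_g$ acting componentwise, substituting $\Delta_g V=(\widetilde{A}^*-\lambda)V$ and iterating yields $\widetilde{B}^*(\widetilde{A}^*)^jV=0$ on $\omega$ for all $j\geq0$, whence $V\equiv0$ on $\omega$ by the Kalman rank condition. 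Finally $V$ solves, on the connected manifold $\mathcal{M}$, a second-order elliptic system whose principal part is the scalar Laplacian (times the identity) and whose zeroth order coefficient is smooth, so the classical vectorial unique continuation propagates the vanishing from $\omega$ to all of $\mathcal{M}$; this is exactly Proposition~\ref{eigenproblem:prop} (its algebraic hypotheses holding here because $\widetilde{A}$ is nilpotent), which I would cite directly. Theorem~\ref{thmcontrolmulti} then yields the controllability of~\eqref{systemBrunov} in $\mathcal{E}=\mathcal{H}^1\times\mathcal{H}^0$.

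I do not expect a genuine analytic obstacle here: the substance is already contained in Theorem~\ref{thmcontrolmulti} and Proposition~\ref{eigenproblem:prop}. The point that deserves care is the bookkeeping of the Brunovsky reduction of Lemma~\ref{lmequivBrunov} — checking that it supplies \emph{exactly} the structural hypotheses of Theorem~\ref{thmcontrolmulti} (the subdiagonal-by-block $A$, the block $B$, and the integers $k$, $d_i$ that define $\mathcal{H}^s$) — and the observation that restricting to $\omega$ collapses the variable-coefficient data to the constant Kalman pair $(\widetilde{A},\widetilde{B})$, which is precisely what makes both verifications above elementary.
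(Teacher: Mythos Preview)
Your proposal is correct and follows the same overall strategy as the paper: reduce to Theorem~\ref{thmcontrolmulti} via Lemma~\ref{lmequivBrunov}, then verify the ODE controllability and the unique continuation hypotheses. The ODE verification is essentially identical to the paper's.

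The one place your argument differs is the unique continuation step. The paper undoes the Brunovsky change of variable: writing $\widetilde{A}_{\omega}(x)=\Mx^{-1}(A\Mx+\chi_{\omega}(x)BF)$ and setting $U=(\Mx^{-1})^{*}\widetilde{U}$, the constraint $\chi_{\omega}\widetilde{B}^{*}\widetilde{U}=0$ becomes $\chi_{\omega}B^{*}U=0$, which kills the extra term $\chi_{\omega}F^{*}B^{*}U$ and reduces the eigenvalue equation to the constant-coefficient problem $-\Delta_{g}U+A^{*}U=\lambda U$; Proposition~\ref{eigenproblem:prop} (with $\beta\equiv 1$, Assumption~2) then applies directly to the original pair $(A,B)$. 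Your route instead works on $\omega$ with the constant pair $(\widetilde{A},\widetilde{B})$, iterates $\widetilde{B}^{*}(\widetilde{A}^{*})^{j}V=0$, and concludes $V\equiv 0$ on $\omega$ by Kalman duality, then propagates by unique continuation for a system with scalar Laplacian principal part. This is valid and arguably more transparent. However, your closing remark that ``this is exactly Proposition~\ref{eigenproblem:prop}'' is not accurate as written: that proposition is formulated for $(\lambda_{1}-\Delta_{g})U+\lambda_{2}A\beta U=0$ with $A$ a \emph{constant} matrix and $\beta$ a scalar function, whereas your equation carries the $x$-dependent matrix $\widetilde{A}_{\omega}^{*}(x)$, which does not factor that way. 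Either drop the citation (your direct argument already suffices) or perform the paper's change of variable first so that Proposition~\ref{eigenproblem:prop} genuinely applies.
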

\bnp
We want to apply Theorem \ref{thmcontrolmulti}. First of all, by Item \ref{itemsubdiag} of Lemma \ref{lmequivBrunov}, the matrix $\widetilde{A}_{\omega}$ satisfies the subdiagonal condition with respect to the splitting of the variables defined by the $d_{i}$. This gives also that System \eqref{systemBrunov} is well posed following from Theorem \ref{thmwellposedmulti}. Then by using Theorem \ref{thmcontrolmulti}, we only need to show the unique continuation of eigenfunctions and the controllability of the following ODE system:
 \be
    \lab{cons-case2-ode1}
\left\{ \begin{array}{lll}
        \displaystyle \dot{ X}(t)=\f{1}{2}\widetilde{A}_{\omega}(\varphi_{ t}(\rho_0)) X(t)+\f{1}{2}{\widetilde{B}\chi_{\omega}(\varphi_{ t}(\rho_0))u(t)},  \\
        X(0)=X_0\in \R^N.
    \end{array}\right.
 \ee
 Item \ref{itemKalman} of Lemma \ref{lmequivBrunov} ensures that for $x\in \omega$, $\widetilde{A}_{\omega}(x),\widetilde{B}\chi_{\omega}(x))$ satisfy Kalman rank condition. Since $\omega$ satisfies GCC, this means that for any $\rho_{0}\in S^{*}\mathcal{M}$, there exists $t\in [0,T]$ so that $\pi_{x}\varphi_{ t}(\rho_0)\in \omega$ and therefore $\widetilde{A}_{\omega}(\varphi_{ t}(\rho_0)),\widetilde{B}\chi_{\omega}(\varphi_{ t}(\rho_0)))$ satisfy Kalman rank condition. Hence the System \eqref{cons-case2-ode1} is controllable.

Concerning the unique continuation of eigenfunctions, we notice that if $\widetilde{U}$ is solution to
\be
\nonumber
   %\label{systemBrunov}
        \begin{cases}
            -\Delta_g \widetilde{U}+\widetilde{A}_{\omega}^*\widetilde{U}=\lambda \widetilde{U}, \\
           \chi_{\omega}(x)\widetilde{B}^*\widetilde{U}=0.
    \end{cases}
 \ee
 then, in fact $-\Delta_g \widetilde{U}+M_x^*A^*(M_x^{-1})^*\widetilde{U}=\lambda \widetilde{U}$ and $U=(M_x^{-1})^*\widetilde{U}$ is solution to
 \be
\nonumber
        \begin{cases}
   %\label{systemoriginal}
            -\Delta_g U+A^*U=\lambda U, \\
          \chi_{\omega}(x)B^*U=0.
    \end{cases}
\ee
for which we can apply Proposition \ref{eigenproblem:prop}. So we finish the proof of Proposition \ref{cons-case2-prop}.
\enp
A combination of Lemma \ref{lmequivBrunov} and Proposition \ref{cons-case2-prop} simply concludes Theorem \ref{cons-case-propintro}.
\appendix

\section{Appendix}
\subsection{Control problem of finite dimensional system}\lab{section1.2.3}
In this Section, we recall well known facts about the control of finite dimensional systems. We refer to \cite{Coron07} for more details.

The following Proposition is a reformulated and precised version of the Brunovsky normal form \cite{Brunovsky} for control of ODE. We provide a proof of it because we did not find it written in this way and we needed a slight modification with a matrix $\widetilde{A}_t$ which will be useful in the change of variable of Lemma \ref{lmequivBrunov}. Yet, it is quite classical in control theory, and we don't claim novelty, see for instance \cite{Trelatbook}.
\begin{prop}[Brunovsky normal form]
\label{propBrunovsky}
Assume ~$A\in \R^{N\times N}, B\in \R^{N\times K}$~ satisfy the Kalman rank condition
and denote $m=rank (B)$. Then, there exist some matrices $\Mx\in GL_{N}(\R)$, $M_{u}\in GL_{K}(\R)$ and $F \in \R^{K\times N}$, and some nonincreasing sequence of integers $d_{i}, i=1,\cdots, k$ (for $k\leq n$) so that
\bna
\widetilde{A}= \Mx^{-1}(A\Mx+BF);\quad \widetilde{B}=\Mx^{-1}BM_{u}
\ena
with
\be \small
\label{AtildeKalman}
%\[
\widetilde{A}=\begin{bmatrix}
    0 &  \dots  &\dots  &0 \\
    A_{21}  & \dots  &\dots  & 0 \\
    \vdots   & \ddots &\ddots  & \vdots \\
    0 & 0   &A_{k,k-1}  & 0
\end{bmatrix}
;\quad
\widetilde{B}=\begin{bmatrix}
   Id_{m}& 0_{m,K-m}\\
   0_{N-m,m}&0_{N-m,K-m}
   \end{bmatrix}.
\ee
with $A_{i+1,i}= \begin{bmatrix}
   Id_{d_{i+1}} & 0_{d_{i+1},d_{i}-d_{i+1}}   \\
    0_{d_{i}-d_{i+1},d_{i+1}} &     0_{d_{i}-d_{i+1},d_{i}-d_{i+1}}
	   \end{bmatrix}\in \R^{d_{i+1} \times d_i} \ (i=1,\cdots,k)$, that is $A_{i+1,i}(k,l)=\delta_{k,l}$ (recall that $d_{i+1} \leq d_i$).
	
	   Moreover, $(\widetilde{A}, \widetilde{B})$ also satisfy the Kalman rank condition.
	
	   Also, for any $t\in \R$, we also have the following form for $\widetilde{A}_{t}= \Mx^{-1}(A\Mx+tBF)$,
		\bna \small
\widetilde{A}_{t}=\begin{bmatrix}
    * &  \dots  &\dots  &* \\
    A_{21} &  \dots  &\dots  & 0 \\
    \vdots &   \ddots &\ddots  & \vdots \\
    0 &  0  &A_{k,k-1}  & 0
\end{bmatrix}.
\ena
\end{prop}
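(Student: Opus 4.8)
\emph{Plan.} The strategy is to deduce the statement from the classical Brunovsky normal form and then relabel the resulting basis from ``chain order'' to ``level order''.

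First I would reduce $B$ to full column rank: pick $M_{0}\in GL_{K}(\R)$ with $BM_{0}=[\hat B\mid 0]$, $\hat B\in\R^{N\times m}$ of rank $m$; the last $K-m$ columns of $\widetilde B$ then vanish automatically and it suffices to treat the pair $(A,\hat B)$, the definitive $M_{u}$ being $M_{0}$ composed with one further $K\times K$ input change fixing the last $K-m$ coordinates. Next I would introduce the controllability filtration $V_{0}=\{0\}$, $V_{j}=\mathrm{Range}(\hat B)+AV_{j-1}=\sum_{\ell=0}^{j-1}A^{\ell}\mathrm{Range}(\hat B)$; the Kalman rank condition says exactly that the strictly increasing chain $(V_{j})$ reaches $\R^{N}$, and I let $k$ be the first index with $V_{k}=\R^{N}$ and set $d_{j}=\dim V_{j}-\dim V_{j-1}$, so that $\sum_{j=1}^{k}d_{j}=N$ and $d_{1}=m$. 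For $j\ge1$ one has $\mathrm{Range}(\hat B)\subseteq V_{j}$, hence $V_{j+1}=V_{j}+AV_{j}$ and the map $V_{j}/V_{j-1}\to V_{j+1}/V_{j}$ induced by $A$ is onto; therefore $d_{j+1}\le d_{j}$, the sequence $d_{1}\ge d_{2}\ge\cdots\ge d_{k}\ge1$ is nonincreasing, and $k\le N$.

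The core of the proof is the construction of the adapted basis, for which I would invoke the classical Brunovsky construction for $(A,\hat B)$ (see \cite{Brunovsky,Trelatbook}): denoting $\kappa_{1}\ge\cdots\ge\kappa_{m}$ the controllability indices, which form the partition conjugate to $(d_{1},d_{2},\dots)$ so that $d_{j}=\#\{i:\kappa_{i}\ge j\}$, one obtains a basis $\{q_{i}^{(\ell)}:1\le i\le m,\ 1\le\ell\le\kappa_{i}\}$ of $\R^{N}$ and a feedback $F'\in\R^{m\times N}$ such that $A_{c}:=A+\hat BF'$ satisfies $A_{c}q_{i}^{(\ell)}=q_{i}^{(\ell+1)}$ for $\ell<\kappa_{i}$, $A_{c}q_{i}^{(\kappa_{i})}=0$, and $q_{1}^{(1)},\dots,q_{m}^{(1)}$ is a basis of $\mathrm{Range}(\hat B)$. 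Relabelling in level order (at level $\ell$ one keeps the vectors $q_{i}^{(\ell)}$ with $\kappa_{i}\ge\ell$, that is $1\le i\le d_{\ell}$, listed as $q_{1}^{(\ell)},\dots,q_{d_{\ell}}^{(\ell)}$, and one forms $\Mx\in GL_{N}(\R)$ by stacking these columns level by level), one gets, with the convention $d_{k+1}:=0$, that $A_{c}q_{i}^{(\ell)}$ equals $q_{i}^{(\ell+1)}$ when $i\le d_{\ell+1}$ and $0$ when $d_{\ell+1}<i\le d_{\ell}$; reading $A_{c}$ off in this basis is then precisely $\widetilde A=\Mx^{-1}A_{c}\Mx$ in the announced block form, the $(\ell+1,\ell)$ block being $[\,Id_{d_{\ell+1}}\mid 0_{d_{\ell+1},\,d_{\ell}-d_{\ell+1}}\,]$ and all other blocks zero. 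Writing $[\,q_{1}^{(1)}\mid\cdots\mid q_{m}^{(1)}\,]=\hat B R$ with $R\in GL_{m}(\R)$, I then take $M_{u}=M_{0}\,\mathrm{diag}(R,Id_{K-m})$ and $F\in\R^{K\times N}$ with $BF=\hat B F'\Mx$ (which exists since the columns of $\hat B F'\Mx$ lie in $\mathrm{Range}(\hat B)\subseteq\mathrm{Range}(B)$); a short computation then gives $\widetilde B=\Mx^{-1}BM_{u}$ in the stated form and $\widetilde A=\Mx^{-1}(A\Mx+BF)$.

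It remains to treat the two addenda. That $(\widetilde A,\widetilde B)$ again satisfies the Kalman rank condition is immediate, since it is obtained from $(A,B)$ by a static feedback, a change of basis and an invertible change of input, each of which preserves controllability (one may also read it off the chain structure directly). For $\widetilde A_{t}=\Mx^{-1}(A\Mx+tBF)=\Mx^{-1}A\Mx+t\,\Mx^{-1}BF$, note that by the reduction of $B$ and the level-order construction the matrix $\Mx^{-1}B$ has nonzero entries only in its first $m$ rows, i.e. only in the first block-row; hence $\Mx^{-1}A\Mx=\widetilde A-\Mx^{-1}BF$ agrees with $\widetilde A$ outside the first block-row, and so does $\widetilde A_{t}$ for every $t$, which therefore has the announced form with arbitrary first block-row and the same blocks $A_{\ell+1,\ell}$ on the subdiagonal. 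I expect the only delicate point to be the previous paragraph, namely the Brunovsky chain construction together with the verification that the level-order relabelling produces exactly the truncated-identity blocks $A_{\ell+1,\ell}$; the rest is elementary linear algebra.
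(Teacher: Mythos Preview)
Your proof is correct and takes a genuinely different route from the paper. The paper proceeds by induction on the dimension $N$: it first puts $B$ in the form $\begin{bmatrix} I_m & 0 \\ 0 & 0 \end{bmatrix}$ via $\Mx_1$ and $M_{u,1}$, writes $\Mx_1^{-1}A\Mx_1=\begin{bmatrix} C_{11} & C_{12} \\ C_{21} & C_{22}\end{bmatrix}$, checks via the Hautus test that the reduced pair $(C_{22},C_{21})$ inherits the Kalman rank condition, applies the induction hypothesis to this $(N-m)$-dimensional system, and then assembles the final $\Mx,M_u,F$ by three successive block-triangular changes of basis; the form of $\widetilde A_t$ falls out by inspecting the explicit product at the end.

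Your approach instead treats the classical Brunovsky chain form for $(A,\hat B)$ as known and performs a single combinatorial relabelling from chain order to level order; the key observation is that the controllability indices $\kappa_1\ge\cdots\ge\kappa_m$ and the level dimensions $d_j=\dim V_j-\dim V_{j-1}$ are conjugate partitions, so at level $\ell$ exactly the chains $i\le d_\ell$ survive and the truncated-identity blocks $A_{\ell+1,\ell}=[I_{d_{\ell+1}}\mid 0]$ appear automatically. This is shorter and makes the partition structure transparent, at the cost of importing the Brunovsky construction wholesale. The paper's induction is fully self-contained (it effectively \emph{reproves} Brunovsky in this block form) and yields the $\widetilde A_t$ statement with no extra work, since the feedback is applied only in the top block at each step. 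Your argument for $\widetilde A_t$, based on $\mathrm{Range}(B)\subseteq\mathrm{span}\{q_1^{(1)},\dots,q_m^{(1)}\}$ so that $\Mx^{-1}B$ lives in the first block-row, is equally clean.
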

\bnp We prove the result by iteration on the dimension. The initialization is trivial, so we prove the iteration.

There exists $\Mx_{1}$, $M_{u,1}$ so that $\Mx_{1}^{-1}BM_{u,1}=\begin{bmatrix}
   Id_{m}& 0_{m,K-m}\\
   0_{N-m,m}&0_{N-m,K-m}
   \end{bmatrix}.
$ We define
\bna
C=\Mx_{1}^{-1}A\Mx_{1}=\begin{bmatrix}
    C_{1,1} & C_{1,2} \\
    C_{2,1} & C_{2,2}
   \end{bmatrix},
\ena
where $C_{1,1}\in \R^{m\times m}$, $C_{2,2} \in \R^{(N-m)\times (N-m)}$. Since $A,B$ satisfies Kalman rank condition, it is easy to obtain that $C,\Mx_{1}^{-1}BM_{u,1}$ satisfies Kalman rank condition. By Hautus Lemma, we check that $(C_{2,2}, C_{2,1})$ satisfies the Kalman rank condition. Indeed,
\be
rank(\lambda-C,\Mx_{1}^{-1}BM_{u,1})=N, \quad  \forall \lambda\in \C,
\ee
which is equivalent to
\be
rank(\begin{bmatrix}
    \lambda-C_{1,1} & -C_{1,2}
 &  Id_{m}& 0_{m,K-m}\\
       -C_{2,1} & \lambda-C_{2,2}&
   0_{N-m,m}&0_{N-m,K-m}
   \end{bmatrix})=N, \quad   \forall \lambda\in \C,
\ee
so that
\be
rank(\begin{bmatrix}
    -C_{2,1} & \lambda-C_{2,2}
   \end{bmatrix})=N-m, \quad   \forall \lambda\in \C,
\ee
then we obtain $(C_{2,2}, C_{2,1})$ satisfies the Kalman rank condition.
By iteration, there exists $G_{x}\in GL_{N-m}(\R)$ and $G_{u}\in GL_{m}(\R)$ and $F_{2}\in \R^{m\times (N-m)}$ so that
\bna
\widetilde{A}_{N-m}= G_{x}^{-1}(C_{2,2}G_{x}+C_{2,1}F_{2});\quad \widetilde{B}_{N-m}=G_{x}^{-1}C_{2,1}G_{u}.
\ena
has the expected form. We define
\bna
\Mx_{2}=\begin{bmatrix}
    Id_{m}& F_{2} \\
     0_{N-m,m} &  G_{x}
   \end{bmatrix}; \quad \Mx_{2}^{-1}=\begin{bmatrix}
    Id_{m}& *\\
   0_{N-m,m} &  G_{x}^{-1}
   \end{bmatrix},
\ena
\bna
\Mx_{2}^{-1}C\Mx_{2}=\begin{bmatrix}
   *& *\\
     G_{x}^{-1}C_{2,1}&G_{x}^{-1}C_{2,1}F_{2}+G_{x}^{-1}C_{2,2}G_{x}
   \end{bmatrix}= \begin{bmatrix}
   *& *\\
     \widetilde{B}_{N-m}G_{u}^{-1}&\widetilde{A}_{N-m}
   \end{bmatrix}.\ena
Now, we define
\bna
\Mx_{3}=\begin{bmatrix}
    G_{u} & 0 \\
     0_{N-m,m} &  Id_{N-m}
   \end{bmatrix}; \quad \Mx_{3}^{-1}=\begin{bmatrix}
 G_{u}^{-1}& 0\\
   0_{N-m,m} & Id_{N-m}
   \end{bmatrix}
\ena
so that for $\Mx=\Mx_{1}\Mx_{2}\Mx_{3}\in GL_{N}(\R)$, we have
\bna
\Mx^{-1}A\Mx= \begin{bmatrix}
  T_{1}& T_{2}\\
     \widetilde{B}_{N-m}&\widetilde{A}_{N-m}
   \end{bmatrix};\quad \Mx^{-1}BM_{u,1}= \begin{bmatrix}
   G_{u}^{-1}& 0\\
   0&0
   \end{bmatrix};
\ena
for some matrix $T_{1}\in \R^{m\times m}$ and $T_{2}\in \R^{m\times (N-m)}$.
So, choosing finally
\bna
M_{u}=M_{u,1} \begin{bmatrix}
 G_{u}& 0\\
   0&Id_{K-m}
   \end{bmatrix};\quad F=-M_{u}\begin{bmatrix}
  T_{1}& T_{2}\\
   0_{K-m,m}&0_{K-m,N-m}
   \end{bmatrix}
   \ena
   we get
 \bna
\Mx^{-1}(A\Mx+BF)= \begin{bmatrix}
 0& 0\\
     \widetilde{B}_{N-m}&\widetilde{A}_{N-m}
   \end{bmatrix};\quad \Mx^{-1}BM_{u}= \begin{bmatrix}
   Id_{m}& 0_{m,K-m}\\
   0_{N-m,m}&0_{N-m,K-m}
   \end{bmatrix}.
\ena
This gives the result given the form of $\widetilde{B}_{N-m}$ and $\widetilde{A}_{N-m}$ given by the iteration. The fact that $(\widetilde{A}, \widetilde{B})$ also satisfy the Kalman rank condition follows by direct analysis of the associated control problem for instance.

Finally $\widetilde{A}_{t}= \Mx^{-1}(A\Mx+tBF)=\begin{bmatrix}
  (1-t)T_{1}&(1-t) T_{2}\\
     \widetilde{B}_{N-m}&\widetilde{A}_{N-m}
   \end{bmatrix}$ has the required form.
\enp

\subsection{Eigenvalue problem}
 We will show the following proposition which will be repeatedly used in Section \ref{sectionexample}
\begin{prop}
\lab{eigenproblem:prop}
Assume $A\in \R^{N\times N}$, $B\in \R^{K\times N}$, $\alpha,\beta$ are smooth functions and $\omega=\{\alpha\neq 0\}$, $o=\{\beta\neq 0\}$, respectively. Then, for all $\lambda_1\in\C$, $\lambda_2\in \C\setminus \{0\}$, or $\lambda_1=1, \lambda_2=0$, the eigenvalue problem
%\todo{I put off the $\lambda_1^2$ and put $\lambda_1$}
\be\lab{eigenproblem:system}
\begin{cases}
(\lambda_1-\Delta_g)U+\lambda_2A\beta U=0,\\
\alpha BU=0.
\end{cases}     \quad\forall x\in \mathcal{M},
\ee
 admits an unique zero solution $U\equiv0$, if $A,B$ and $\alpha,\beta$ satisfy one of the following assumptions
 \begin{enumerate}
\item  $(A^{tr},B^{tr})$ satisfy Kalman Rank Condition and $\tilde{K}=1$ ( number of distinct eigenvalues of $A$ is 1 ), $\beta$ satisfies a sign condition, that is, $\beta\geq0$ (or $\beta\leq0$).
    \item   $(A^{tr},B^{tr})$ satisfy Kalman Rank Condition and $\omega\cap o\neq \emptyset$, $\beta$ satisfies a sign condition, that is, $\beta\geq0$ (or $\beta\leq0$).
\end{enumerate}
\end{prop}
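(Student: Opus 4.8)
The plan is to exploit that $\omega=\{\alpha\neq 0\}$ is open, so that the observation condition $\alpha BU=0$ on $\mathcal{M}$ is equivalent to $BU=0$ on $\omega$; by elliptic regularity $U$ is $C^\infty$, so every manipulation below is classical. The engine of the argument is the elementary fact that a function vanishing on an open set has all its derivatives, in particular its Laplace--Beltrami, vanishing there; combined with the equation, rewritten as $\Delta_g U=(\lambda_1 I+\lambda_2\beta A)U$, this trades a factor $\Delta_g$ against the coupling matrix $\lambda_2\beta A$. The degenerate case $\lambda_1=1,\lambda_2=0$ is immediate: $U$ would be an eigenfunction of $-\Delta_g$ for the eigenvalue $-1<0$, hence $U\equiv0$; so below $\lambda_2\neq0$.

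\textbf{Case 2 ($\omega\cap o\neq\emptyset$).} Work on the nonempty open set $\mathcal O:=\omega\cap o$, on which $\beta\neq0$. I would show by induction that $BA^{j}U=0$ on $\mathcal O$ for all $j\ge0$: for $j=0$ it is $BU=0$ on $\omega\supset\mathcal O$; and if it holds for $j$ on the open set $\mathcal O$, then $0=\Delta_g(BA^{j}U)=BA^{j}\Delta_gU=\lambda_1 BA^{j}U+\lambda_2\beta\,BA^{j+1}U=\lambda_2\beta\,BA^{j+1}U$ on $\mathcal O$, and since $\lambda_2\neq0$ and $\beta\neq0$ there, $BA^{j+1}U=0$ on $\mathcal O$. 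The assumption that $(A^{tr},B^{tr})$ satisfies the Kalman rank condition says precisely that $\bigcap_{j\ge0}\ker(BA^{j})=\{0\}$, so $U\equiv0$ on $\mathcal O$. Since $U$ solves the elliptic system $-\Delta_gU+(\lambda_1 I+\lambda_2\beta A)U=0$, whose principal part is diagonal and whose zero-order coefficients are smooth, the classical (vectorial, Aronszajn-type) unique continuation property propagates this to $U\equiv0$ on the connected manifold $\mathcal M$. Note that here the sign condition on $\beta$ is not essentially used.

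\textbf{Case 1 ($\widetilde K=1$).} Write $A=\mu I+N_0$ with $\mu\in\R$ the unique eigenvalue and $N_0:=A-\mu I$ nilpotent, say $N_0^{p}=0\neq N_0^{p-1}$; we may assume $\beta\not\equiv0$, else the coupling is absent and the statement is degenerate, so $o=\{\beta\neq0\}$ is a nonempty open set. Setting $q:=\lambda_1+\lambda_2\mu\beta$ one gets $\Delta_g(N_0^{k}U)=q\,N_0^{k}U+\lambda_2\beta\,N_0^{k+1}U$ for every $k$, and I would peel off the powers of $N_0$ from the top. The top iterate $w:=N_0^{p-1}U$ solves the homogeneous equation $-\Delta_gw+q\,w=0$. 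If this forces $w=0$ — which is automatic unless $-\Delta_g+q$ is non-injective, e.g. unless $\mu=0$ and $-\lambda_1\in\mathrm{spec}(-\Delta_g)$ — we pass to $N_0^{p-2}U$. Otherwise $w\neq0$, and pairing the equation $-\Delta_g(N_0^{p-2}U)+q\,N_0^{p-2}U=-\lambda_2\beta\,w$ against $\bar w$ and integrating by parts makes the left-hand side vanish, because $w$ solves the adjoint homogeneous equation, leaving $\lambda_2\int_{\mathcal M}\beta\,|w|^{2}=0$. This is exactly where the sign condition $\beta\geq0$ (or $\leq0$) enters: it forces $w\equiv0$ on $o$, and unique continuation then gives $w\equiv0$ on $\mathcal M$. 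Iterating down, $N_0^{p-1}U=\dots=N_0U=0$; at this final stage $U$ itself solves the homogeneous equation $-\Delta_gU+q\,U=0$, while $BN_0^{j}U=0$ on $\mathcal M$ for $j\ge1$ and $BU=0$ on $\omega$, so for $x\in\omega$ one has $U(x)\in\bigcap_{j\ge0}\ker(BN_0^{j})=\bigcap_{j\ge0}\ker(BA^{j})=\{0\}$ by the Kalman condition; hence $U\equiv0$ on $\omega$, and unique continuation concludes.

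\textbf{Main obstacle.} The conceptual content is above; the delicate point is the endgame of Case 1. One must carefully track the inhomogeneous terms carrying factors of $\beta$, justify the integration-by-parts identity $\lambda_2\int\beta|w|^{2}=0$ (which is routine once $U$ is known to be smooth), and — when $\mu\neq0$, so that $q$ is genuinely $x$-dependent and possibly complex — verify both that the unique continuation principle still applies to $-\Delta_g\cdot+q\,\cdot$ (it does, $q$ being smooth) and that separating real and imaginary parts of the relevant energy identities still yields $\int\beta|w|^{2}=0$ and hence $w\equiv 0$; this is bookkeeping with no further conceptual input. In Case 2 the only external ingredient is the classical unique continuation for elliptic systems with diagonal principal part.
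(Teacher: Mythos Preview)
Your approach is correct and in Case~1 follows essentially the paper's line (nilpotent/Jordan peeling, the sign condition on $\beta$ to kill the top iterate via an energy identity, then Kalman/Hautus on the eigenspace and scalar unique continuation). The pairing step you flag as the ``main obstacle''---showing that $\int_{\mathcal M}\beta|w|^2=0$ when $q=\lambda_1+\lambda_2\mu\beta$ is complex---is exactly the step the paper also states without details; so this is not a gap relative to the paper, though it is worth noting that the self-adjointness you invoke (``$w$ solves the adjoint homogeneous equation'') only holds literally when $q$ is real, and the general case needs an extra manipulation (e.g.\ also using the conjugate equation satisfied by $\bar U$).

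In Case~2 you take a genuinely different and more economical route. The paper first performs the full Jordan decomposition, uses the sign condition and the energy identity to kill all higher Jordan-chain components on $\mathcal{M}$, and then on $\tilde\omega\subset\omega\cap o$ iterates the operator $(\lambda_1-\Delta_g)$ on the observation relation to produce a Vandermonde system in the distinct eigenvalues $\mu_i$, separating the eigenspaces before invoking Kalman and scalar UCP. Your argument bypasses all of this: iterating $\Delta_g$ directly on $BU=0$ on $\mathcal O=\omega\cap o$ yields $BA^{j}U=0$ there for every $j$, and the Kalman condition $\bigcap_j\ker(BA^{j})=\{0\}$ gives $U\equiv0$ on $\mathcal O$, after which a single application of (vectorial) unique continuation for the elliptic system with diagonal principal part finishes. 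This avoids the Jordan decomposition, the Vandermonde step, and---as you observe---the sign condition on $\beta$ altogether, at the modest price of invoking UCP for the coupled system rather than scalar UCP.
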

Before we prove Proposition \ref{eigenproblem:prop}, we need to recall some basic facts of linear algebra and state notations related to Jordan decomposition. For any matrix $A\in \R^{N\times N}$, we denote by $\{\mu_{i},i=1,\cdots,\tilde{K}\}$ distinct eigenvalues of $A$. $l_i$ denotes the geometric multiplicity (the dimension of Ker($A-\mu_i)$, that is the number of jordan blocks corresponding to $\mu_i$) of $\mu_i$ for $i=1,\cdots,\tilde{K}$. Let $P_{ij}^1\in \C^N$ be eigenvector corresponding to $\mu_{i}$ for $i=1,\cdots,\tilde{K}; j=1,...,l_i$. We define root vectors $P_{ij}^k\in \C^N$ associated to each eigenvector $\{P_{ij}^1\}$, which are given by
\be
\begin{cases}
(A-\mu_{i})P_{ij}^{k+1}=P_{ij}^{k};1\leq k\leq l_i^j-1\\
(A-\mu_{i})P_{ij}^1=0,
\end{cases}
\ee
where  $l_i^j$ denote the dimension of Jordan chain of $\{P_{ij}^1\}$ for $i=1,\cdots,\tilde{K}; j=1,...,l_i$. Then by classical theory of linear algebra, we can obtain $$\{P_{ij}^k\},i=1,\cdots,\tilde{K}; j=1,...,l_i;k=1,\cdots,l_i^j$$ span a base of
$\C^N$. Define a matrix
\be\displaystyle
P:=\left[P_{11}^1|P_{11}^2|\cdots|P_{11}^{l_1^1}|P_{12}^1|\cdots|P_{1l_1}^{l_1^{l_1}}|P_{21}^1|\cdots
|P_{\tilde{K}l_{\tilde{K}}}^{l_{\tilde{K}}^{l_{\tilde{K}}}}\right],
\ee
so we have Jordan Canonical Form $\tilde{A}$ of $A$:
\be\displaystyle\lab{jcf:a}
\tilde{A}:=P^{-1}AP=diag(A_1,A_2,\cdots,A_{\tilde{K}})
%\left(
%                \begin{array}{cccc}
%                  A_{1} & 0 & \cdots & 0 \\
%                  0 & A_{2} & \cdots & 0 \\
%                  \vdots & \vdots & \ddots& \vdots \\
%                  0 & \cdots &\cdots & A_{\tilde{K}} \\
%                \end{array}
%              \right)_{N\times N}
\ee
where
\be\lab{jcf:a_{ij}} \small
\displaystyle
A_{i}=diag(A_{i1},\cdots,A_{il_i}),
%\left(
%                \begin{array}{cccc}
%                  A_{i1} & 0 & \cdots & 0 \\
%                  0 & A_{i2} & \cdots & 0 \\
%                  \vdots & \vdots & \ddots& \vdots \\
%                  0 & \cdots &\cdots & A_{il_i} \\
%                \end{array}
%              \right),
   %           \quad A_{ij}=\left(
%                \begin{array}{ccccc}
%                  \mu_{i} & 1 &  & &  \\
%                   & \mu_{i} & 1& &  \\
%                   & &\ddots&\ddots&\\
%                   &  & & \mu_i& 1\\
%                   &  & && \mu_{i} \\
%                \end{array}
%              \right)_{l_i^j\times l_i^j},
\ee
and $A_{ij}$ is $C^{l_i^j\times l_i^j}$ jordan block corresponding to $\mu_i$
for $i=1,\cdots,\tilde{K} ; j=1,\cdots,l_i$.

%Next we state a useful lemma  which describes the relation between Kalman Rank Condition and eigenvectors.
%\begin{lem} (\cite[Proposition 3.1]{AFAMD:11} )\lab{lem:kalman}
%The following assertions are equivalent:\\
%(1)~$(A^{tr},B^{tr})$~satisfy Kalman rank condition, i.e.:  ~$$rank(B^{tr},A^{tr}B^{tr},...,(A^{tr})^{N-1}B^{tr})=N.$$
%(2) For any fixed $i, \{BP_{i1}^1,\cdots,BP_{il_i}^1\}$ are linearly independent,i.e.
%\be\lab{kalman:equv cond}
%rank[BP_{i1}^1,BP_{i2}^1,\cdots,BP_{il_i}^1]=l_i, \quad \forall 1\leq i\leq \tilde{K}.
%\ee
%\end{lem}
% The proof of Lemma \ref{lem:kalman} can be found in \cite{AFAMD:11}, so we omit the proof.
Let $
\tilde{B}:=BP,
$
then we state the proof of proposition \ref{eigenproblem:prop}.
\bnp[Proof of Proposition \ref{eigenproblem:prop}]
%For notation simplicity, we use same notation of $A$ replaced $\tilde{A}$.
Case  ''$\lambda_1=1,\lambda_2=0 $" is simple, since $1-\Delta$ is a positive operator, then $U=0$. So we only need to prove case
'$\lambda_2 \neq 0$'.
 Let $W:=P^{-1}U$. Since $\tilde{A}$ satisfies \eqref{jcf:a}, System \eqref{eigenproblem:system} can be decoupled of $\tilde{K}$  blocks, so that we only need to consider the solution $W_{ij}=(W_{ij}^1,\cdots,W_{ij}^{l_i^j})\in (C^\infty(\mathcal{M}))^{l_i^j}$ of the following problem:
\be
\lab{eigenproblem:subsystem}
(\lambda_1-\Delta_g)W_{ij}+\lambda_2A_{ij}\beta W_{ij}=0
, \quad \forall x\in \mathcal{M},
\ee
where $A_{ij}$ is given by \eqref{jcf:a_{ij}} for every $i=1,\cdots,\tilde{K} ; j=1,\cdots,l_i$.
More precisely, we rewrite System \eqref{eigenproblem:subsystem} as follow,
\be
\begin{cases}
(\lambda_1-\Delta_g)W_{ij}^1+\lambda_2\mu_{i}\beta W_{ij}^1+{\lambda_2\beta} W_{ij}^2=0,\\
\vdots\\
(\lambda_1-\Delta_g)W_{ij}^{l_i^j}+\lambda_2\mu_{i}\beta W_{ij}^{l_i^j}=0.
\end{cases}
\ee
Multiplying $W_{ij}^{l_i^j-1}$-equation by $\bar{W}_{ij}^{l_i^j}$ and by integration by parts over $\mathcal{M}$, since $\beta$ satisfies sign condition, we have
\be
\nonumber
W_{ij}^{l_i^j}=0, \quad\forall x\in o.
\ee
Then by unique continuation of scalar elliptic equation, we obtain
\be
\nonumber
W_{ij}^{l_i^j}=0, \quad \forall x\in \mathcal{M}.
\ee
Hence, repeating this process to each equation
of $\{W_{ij}^k\},$ for $k=2,\cdots,l_i^j$, we obtain
\be
\nonumber
W_{ij}^k=0, \quad\forall k=2,\cdots,l_i^j, x\in \mathcal{M}.
\ee
It suffices to show that $W_{ij}^1=0, i=1,\cdots,\tilde{K},j=1,\cdots,l_i$ under assumptions 1 or 2. Indeed, $W_{ij}^1$ satisfies the following equation
\be\lab{eigenvalue prob:system fini}
(\lambda_1-\Delta_g)W_{ij}^1+\lambda_2\mu_i\beta W_{ij}^1=0, \quad i=1,\cdots,\tilde{K},j=1,\cdots,l_i.
\ee
 Since $\tilde{B}$ can be rewritten as
\be
\left[BP_{11}^1|\cdots|BP_{\tilde{K}l_{\tilde{K}}}^{l_{\tilde{K}}^{l_{\tilde{K}}}} \right],
\ee
then
\be\lab{eigenvalue prob:observ}
\tilde{B}W \alpha =\alpha\sum_{i,j} BP_{ij}^1W_{ij}^1=\alpha \sum^{\tilde{K}}_{i=1} (\sum^{l_i}_{j=1} BP_{ij}^1W_{ij}^1)=0.
\ee
If we have assumption 1, that is, $\tilde{K}=1$ and $A,B$ satisfy Kalman rank condition. Then we obtain that for $j=1,\cdots,l_1, x\in \omega$,  $W_{1j}^1=0$ following from \cite[Proposition 3.1]{AFAMD:11}. By unique continuation of scalar elliptic equation \eqref{eigenvalue prob:system fini}, we have $W_{1j}^1=0,x\in \mathcal{M},\forall j=1,\cdots,l_1$.

Next, if we have Assumption 2, that is,  $A,B$ satisfy Kalman rank condition and $\omega\cap o\neq \emptyset$, then set $\tilde{\omega}\subset \omega\cap o$,  in view of \eqref{eigenvalue prob:observ}, we have
\be
(\lambda_1-\Delta_g)\sum_{i} (\sum_j BP_{ij}^1W_{i1}^1)=0, \quad \forall x\in \tilde{\omega}.
 \ee
By using \eqref{eigenvalue prob:system fini}, we have
\be
  \beta\sum_{i}\lambda_2\mu_i (\sum_j BP_{ij}^1W_{ij}^1)=0,  \quad\forall x\in \tilde{\omega}.
\ee
By induction, we obtain
\be
 \beta\sum^{\tilde{K}}_{i=1}(\lambda_2\mu_i )^k (\sum_{j=1}^{l_i} BP_{ij}^1W_{ij}^1)=0, \quad \forall x\in \tilde{\omega}, k=1,\cdots,\tilde{K}.
\ee
Since $\{\mu_i\}_{1,\cdots,\tilde{K}}$ are different, we have
\be
\sum_{j=1}^{l_i} BP_{ij}^1W_{ij}^1=0, \quad \forall x\in \tilde{\omega}, i=1,\cdots,\tilde{K}.
\ee
By \cite[Proposition 3.1]{AFAMD:11},
we obtain
\be
W_{ij}^1=0,  \quad \forall x\in \tilde{\omega}, i=1,\cdots,\tilde{K},j=1,\cdots,l_i.
\ee
By \eqref{eigenvalue prob:system fini} and unique continuation of scalar elliptic equation, we have
\be
W_{ij}^1=0,   \quad
\forall  x\in \mathcal{M}, i=1,\cdots,\tilde{K},j=1,\cdots,l_i.
\ee
 So we finish the proof.
\enp

\subsection{Proof of Lemma \ref{lemma: Regularity}}\lab{1smooth}

In the main part of the paper, we use a matrix operator type version of 1-smooth effect Lemma \ref{lemma: Regularity}. A version of such a result in scalar case can be found in \cite{LaurentLeautaud16}. The Proof of Lemma \ref{lemma: Regularity} relies on the following lemma.
\begin{lem}
\label{l:comm-evol-lambda}
Let $\mathcal{I} \subset \R$ be an interval and let $H_\pm(t)=\pm\Lambda Id_{N\times N}+iW_0(t), W_0\in C^\infty(\mathcal{I};\Psi^0_{phg} (\mathcal{M};\C^{N\times N}))$. Define
 $S_\pm(t,0)$ as the solution operator for the evolution equation $\pa_t - i H_{\pm}(t)$ respectively. Then, for any $A \in  \Psi^m_{phg} (\mathcal{M};\C^{N\times N})$, we have
\be
\label{e:commutator-group}
[A  ,  S_\pm(t,0)] = \int_0^t S_\pm(t,s) [A ,i H_\pm(s)] S_\pm(s,0) ds.
\ee
 In particular, if we take $A = \Lambda Id_{N\times N}$, then, for all $s \in \R$, we have
  \be
  [\Lambda ,  S_\pm(t,0)] , [\Lambda ,  S_\pm(t,0)^*] \in \mathcal{B}_{loc}(\mathcal{I} ;\mathcal{L}(H^{s}(\mathcal{M};\C^{N\times N}))).\ee
\end{lem}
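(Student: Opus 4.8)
The plan is to establish the commutator identity \eqref{e:commutator-group} by a Duhamel (variation-of-constants) computation, and then to read off the stated mapping property by specializing $A=\Lambda Id_{N\times N}$, the key point being that the commutator of $A$ with the first-order operator $\Lambda$ drops by one order. First I would fix $t\in\mathcal{I}$ and differentiate in $s$ the operator-valued function $\Phi(s):=S_\pm(t,s)\,A\,S_\pm(s,0)$. From $\pa_s S_\pm(s,0)=iH_\pm(s)S_\pm(s,0)$ and, after differentiating the group relation $S_\pm(t,s)S_\pm(s,0)=S_\pm(t,0)$ and using invertibility of $S_\pm(s,0)$, from $\pa_s S_\pm(t,s)=-S_\pm(t,s)\,iH_\pm(s)$, one gets
\be
\pa_s\Phi(s)=S_\pm(t,s)\,[A,iH_\pm(s)]\,S_\pm(s,0).
\ee
Since $\Phi(t)=A\,S_\pm(t,0)$ and $\Phi(0)=S_\pm(t,0)\,A$, integrating over $[0,t]$ yields exactly \eqref{e:commutator-group}. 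As $A\in\Psi^m_{phg}$ and $H_\pm(s)$ is of order $1$, the commutator $[A,iH_\pm(s)]$ is of order $m$, so the integrand lies in $\mathcal{L}((H^\sigma)^N,(H^{\sigma-m})^N)$, consistent with the left-hand side; the one subtlety, handled exactly as in the proof of Theorem \ref{system egorov}, is that the $s$-derivatives are taken in the weak sense (applied to a fixed function, then extended), not in a fixed Banach space of operators.

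For the ``in particular'' statement, I would take $A=\Lambda Id_{N\times N}$ and compute $[\Lambda Id_{N\times N},iH_\pm(s)]=\pm i[\Lambda Id_{N\times N},\Lambda Id_{N\times N}]-[\Lambda Id_{N\times N},W_0(s)]=-[\Lambda,W_0(s)]$, the first term vanishing since $\Lambda$ commutes with itself. Because $\Lambda\in\Psi^1_{phg}$ has \emph{scalar} principal symbol $|\xi|_x$ and $W_0(s)\in\Psi^0_{phg}(\mathcal{M};\C^{N\times N})$, their principal symbols commute, so the pseudodifferential calculus on the compact manifold $\mathcal{M}$ gives $[\Lambda,W_0(s)]\in\mathcal{C}^\infty(\mathcal{I};\Psi^0_{phg}(\mathcal{M};\C^{N\times N}))$; in particular it is bounded on $(H^s)^N$ for every $s\in\R$, uniformly for the time parameter in any compact subinterval. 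Plugging into \eqref{e:commutator-group},
\be
[\Lambda,S_\pm(t,0)]=-\int_0^t S_\pm(t,s)\,[\Lambda,W_0(s)]\,S_\pm(s,0)\,ds,
\ee
and since well-posedness of \eqref{hyperbolic} makes $S_\pm$ locally uniformly bounded on each $(H^s)^N$, the right-hand side is a bounded operator on $(H^s)^N$, locally uniformly in $t$; this is the asserted $\mathcal{B}_{loc}(\mathcal{I};\mathcal{L}(H^s(\mathcal{M};\C^{N\times N})))$ membership. For $[\Lambda,S_\pm(t,0)^*]$ I would either take the $L^2$-adjoint of this identity, using $\Lambda=\Lambda^*$ so that $[\Lambda,S_\pm(t,0)^*]=-[\Lambda,S_\pm(t,0)]^*$ and that boundedness on all $(H^s)^N$ passes to the $L^2$-adjoint, or repeat the Duhamel argument with $S_\pm(t,0)^*$, which by \eqref{adjointequations} solves an evolution equation of the same type with $W_0$ replaced by $W_0^*$, whose commutator with $\Lambda$ is again of order $0$.

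The main obstacle is not a hard estimate but the bookkeeping: one has to keep track of the weak (non-Banach) sense of the $s$-differentiation, as in Theorem \ref{system egorov}, and verify that the order count is $m$ for the general identity but exactly $0$ for $[\Lambda,W_0]$ — it is precisely this one-order gain that converts the Duhamel integral into a genuine bounded operator on $(H^s)^N$ with no loss of regularity, which is what the lemma (and its use in the proof of Lemma \ref{lemma: Regularity}) needs.
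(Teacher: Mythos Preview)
Your proof is correct and follows essentially the same Duhamel/variation-of-constants approach as the paper. The only cosmetic difference is that the paper differentiates $u_\pm(t)=[A,S_\pm(t,0)]u_0$ in the forward variable $t$ and recognizes it as the solution of $(\pa_t-iH_\pm(t))u_\pm=[A,iH_\pm(t)]S_\pm(t,0)u_0$ with zero initial data, while you differentiate the interpolating function $\Phi(s)=S_\pm(t,s)AS_\pm(s,0)$ in $s$; these are equivalent computations, and you additionally spell out the order count for the ``in particular'' part that the paper leaves implicit.
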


\bnp[Proof of Lemma \ref{l:comm-evol-lambda}]
 Let
 \be
 u_\pm(t) = [A  ,  S_\pm(t,0)] u_0 = A S_\pm(t, 0)u_0 - S_\pm(t,0)A u_0,~~u_{\pm}(0) =0.
 \ee
 solves
\beq\begin{split}
\pa_t u_\pm(t) = A i H_\pm(t) S_\pm(t, 0)u_0 - i H_\pm(t) S_\pm(t,0)A u_0 = [A, iH_\pm(t)]S_\pm(t,0) u_0  + i H_\pm(t) u_\pm(t) .
\end{split}\eeq
so that the  Duhamel principal yields \eqref{e:commutator-group}. We finish the proof of Lemma \ref{l:comm-evol-lambda}.\enp

\bnp[Proof of Lemma \ref{lemma: Regularity}] %We first notice that $B(T) \in \mathcal{B}_{loc}(\mathcal{I} ;\mathcal{L}(H^{s}, H^{s-m}(\mathcal{M})))$ since $S_\pm(t,0)$ preserve regularity. We recall also that
We refer for instance to \cite[Section A.3]{LaurentLeautaud16} for some details in the scalar case, the proof being almost the same. So we omit it.
\enp
\def\cprime{$'$}

\end{CJK*}

\end{document}